\documentclass{article}

\usepackage{PRIMEarxiv}
\usepackage[utf8]{inputenc}
\usepackage[T1]{fontenc}
\usepackage{amsmath,amssymb,amsthm,mathtools}
\usepackage{mathrsfs}
\usepackage{bm}
\usepackage{booktabs}
\usepackage{enumitem}
\setlist[enumerate]{label=(\arabic*)}
\usepackage{microtype}
\usepackage{xcolor}
\usepackage{hyperref}
\usepackage{url}
\usepackage[numbers,sort&compress]{natbib}
\usepackage{fancyhdr}
\usepackage{indentfirst}
\numberwithin{equation}{section}

\hypersetup{
  colorlinks=true,
  linkcolor=blue,
  citecolor=blue,
  urlcolor=blue
}

\newtheorem{theorem}{Theorem}[section]
\newtheorem{proposition}[theorem]{Proposition}
\newtheorem{lemma}[theorem]{Lemma}

\theoremstyle{definition}

\theoremstyle{remark}
\newtheorem{remark}[theorem]{Remark}

\newcommand{\E}{\mathbb{E}}
\newcommand{\Pp}{\mathbb{P}}
\newcommand{\R}{\mathbb{R}}
\newcommand{\N}{\mathbb{N}}
\newcommand{\Z}{\mathbb{Z}}

\newcommand{\cF}{\mathcal{F}}
\newcommand{\dd}{\,\mathrm{d}}
\newcommand{\1}{\mathbf{1}}
\newcommand{\Var}{\operatorname{Var}}
\newcommand{\Cov}{\operatorname{Cov}}

\newcommand{\e}{\mathrm{e}}

\newcommand{\norm}[1]{\left\lVert #1\right\rVert}

\newcommand{\Ptilde}{\widetilde{\Pp}}
\newcommand{\Etilde}{\widetilde{\E}}

\title{Disorder Thresholds and Free Energy of Brownian Directed Polymers with\\
Product and Radial Spatial Correlations}

\author{%
  \textnormal{Junjie Cao}%
  \thanks{Junjie Cao: Beijing Normal-Hong Kong Baptist University, \texttt{caojunjie@bnbu.edu.cn}}
  \and
  \textnormal{Guanglin Rang}%
  \thanks{Guanglin Rang: School of Mathematics and Statistics, Wuhan University, \texttt{glrang.math@whu.edu.cn}}
  \and
  \textnormal{Jianglun Wu}%
  \thanks{Jianglun Wu: Beijing Normal-Hong Kong Baptist University, \texttt{jianglunwu@bnbu.edu.cn}}
}

\begin{document}
\maketitle

\begin{abstract}
We study a Brownian directed polymer in a centered Gaussian environment that is white in time and 
colored in space having long-range spatial correlations. For product-type covariances
\(Q(x)\asymp\prod_{j=1}^d(1+|x_j|)^{-\alpha_j}\), with
\(\alpha_j\in(0,1)\) and \(\kappa=\sum_j\alpha_j\), we identify the disorder transition at the marginal value \(\kappa=2\). For \(\kappa>2\), weak disorder holds at sufficiently small inverse temperature; for \(\kappa<2\), the quenched free energy $p(\beta)$ satisfies
\(-p(\beta)\asymp\beta^{4/(2-\kappa)}\) as \(\beta\downarrow0\). For \(\kappa=2\), strong disorder holds for every \(\beta>0\), while \(p(\beta)=0\) for all sufficiently small \(\beta\), so \(\beta_c=0<\bar\beta_c\). We also consider the radial covariance cases, where $ Q(x)\asymp(1+|x|)^{-\vartheta}$, when \(d\ge3,\vartheta=2\) and \(d=2,\vartheta\ge2\), which was left unanswered in Lacoin~\cite{Lacoin2011}. When $d=2$, we get
\(\ln(-p(\beta))\asymp-\beta^{-2}\) for \(\vartheta>2\) and
\(\ln(-p(\beta))\asymp-\beta^{-1}\) for \(\vartheta=2\). The proofs consist of replica coupling, Feynman--Kac variational formula, overlap methods, and continuous-space fractional moments with ordered Wiener-chaos changes of measure.
\end{abstract}
 
\section{Introduction and main results}
\label{sec:introduction}

\subsection{The Gaussian Brownian polymer}
\label{subsec:model}

Directed polymers were introduced as effective models for random interfaces
by Huse and Henley~\cite{HuseHenley1985}.  We refer to
\cite{denHollander2009,Comets2017,Zygouras2024} for general accounts of the
model.  The continuous Gaussian model studied below goes back to
Rovira and Tindel~\cite{RoviraTindel2005}, Lacoin~\cite{Lacoin2011}.

In the usual space--time i.i.d. setting, the disorder transition is often
described through two related quantities.  The first is the strong-disorder
threshold \(\beta_c\), defined by the extinction of the normalized partition-function
martingale.  The second is the very-strong-disorder threshold \(\bar\beta_c\), defined by the
strict negativity of the normalized quenched free energy.  Very strong disorder always implies strong disorder, but the converse is a subtler question. The possible influencing factors therein will be discussed in Section~\ref{subsec:literature-strong-verystrong} below.

This paper studies this question for Brownian directed polymers in Gaussian
environments whose spatial covariance has long-range decay.  Our results show
that the decay exponent of the covariance governs not only the small-\(\beta\)
asymptotics of the free energy, but also the coincidence of the strong- and
very-strong-disorder thresholds.  In particular, at the critical
decay scale for product-type correlations, strong disorder may hold for every
\(\beta>0\) while the quenched free energy remains zero at
small \(\beta\). The same result also holds for radial kernels. Thus critical spatial correlations produce a
separation of the two disorder thresholds.

We deal with two types of correlations.  The first is anisotropic product kernels.
The second is radial kernels in Lacoin~\cite{Lacoin2011}.  For both classes we identify the
corresponding free-energy scale.

Let \(d\ge1\), and let \(B=(B_t)_{t\ge0}\) be standard Brownian motion in
\(\R^d\).  We write \(P_x\) for its law when \(B_0=x\), and \(P=P_0\).
Let \(Q:\R^d\to[0,\infty)\) be a bounded, continuous and positive definite function,
with \(Q(0)=1\).  The environment is a real centered Gaussian field, with covariance
\begin{equation*}
  \E\big[\omega(t,x)\omega(s,y)\big]=(t\wedge s)Q(x-y).
\end{equation*}
Equivalently, its distributional time derivative is white in time:
\begin{equation*}
  \E\big[\omega(\dd t,x)\omega(\dd s,y)\big]
  =\delta_0(t-s)Q(x-y)\dd t\dd s.
\end{equation*}
A precise realization of the environment is obtained from an isonormal
Gaussian process over the Hilbert space \(\mathcal H_Q\), which is separable, defined as the
completion of \(C_c^\infty(\R_+\times\R^d)\) under
\[
  \langle f,g\rangle_{\mathcal H_Q}
  :=
  \int_0^\infty\!
  \iint_{\R^d\times\R^d}
  f(t,x)Q(x-z)g(t,z)\dd x\dd z\dd t.
\]

Let \(\mathfrak H_Q\) be the Hilbert space generated by the symbols
\((e_x)_{x\in\R^d}\), with
\begin{equation}
  \langle e_x,e_y\rangle_{\mathfrak H_Q}
  :=
  Q(x-y).
  \label{eq:e-x-inner-product}
\end{equation}
For \(f\in C_c^\infty(\R_+\times\R^d)\), set
\[
  (\mathcal Jf)(t)
  :=
  \int_{\R^d}f(t,x)e_x\dd x.
\]
Then
\[
  \langle \mathcal Jf,\mathcal Jg\rangle_
  {L^2(\R_+;\mathfrak H_Q)}
  =
  \langle f,g\rangle_{\mathcal H_Q},
\]
so that \(\mathcal J\) extends to the canonical identification
\begin{equation}
  \mathcal H_Q
  \simeq
  L^2(\R_+;\mathfrak H_Q).
  \label{eq:space-time-Hilbert-factorization}
\end{equation}

Let \(W\) be the isonormal Gaussian process over
\(L^2(\R_+;\mathfrak H_Q)\), so that
\[
  \E[W(h)W(k)]
  =
  \langle h,k\rangle_{L^2(\R_+;\mathfrak H_Q)}.
\]
The environment is realized by
\[
  \omega(f):=W(\mathcal Jf).
\]
For a fixed continuous path \(B\), define
\begin{equation}
  h_B(s):=\1_{[0,T]}(s)e_{B_s},
  \qquad
  H_T(B):=W(h_B)
  =
  \int_0^T\omega(\dd s,B_s).
  \label{eq:H-as-isonormal}
\end{equation}
Consequently,
\begin{equation}
  \E[H_T(B)H_T(B')]
  =
  \langle h_B,h_{B'}\rangle
  =
  \int_0^TQ(B_s-B_s')\dd s.
  \label{eq:Hamiltonian-covariance}
\end{equation}
In particular, \(\operatorname{Var}(H_T(B))=T\).  The representation \eqref{eq:space-time-Hilbert-factorization} will be used in
Section~\ref{subsec:detector-construction} and Section~\ref{subsec:critical-detector} to construct the ordered
Wiener-chaos detector.
For inverse temperature \(\beta\ge0\), define the normalized partition function
\begin{equation}
  W_T(\beta)
  :=P\left[\exp\left\{\beta H_T(B)-\frac{\beta^2}{2}T\right\}\right]
  \label{eq:partition-function}
\end{equation}
and the polymer measure
\begin{equation}
  \mu_{T,\omega}^{\beta}(\dd B)
  :=\frac{1}{W_T(\beta)}
  \exp\left\{\beta H_T(B)-\frac{\beta^2}{2}T\right\}P(\dd B).
  \label{eq:polymer-measure}
\end{equation}
When no confusion can arise, the superscript \(\beta\) is omitted.

Given a fixed $\beta$, the process \((W_T(\beta))_{T\ge0}\) is a positive mean-one martingale in
the environment filtration \(\mathcal{F}_t = \sigma(\omega(s, x) : 0 \leq s \leq t, \, x \in \mathbb{R}^d) \).  It therefore converges almost surely to a limit
\(W_\infty(\beta)\).  The weak-disorder phase is defined by
\(W_\infty(\beta)>0\) almost surely, and the strong-disorder phase by
\(W_\infty(\beta)=0\) almost surely.

Set
\begin{equation}
  a_T(\beta):=\E\log W_T(\beta).
  \label{eq:aT-definition}
\end{equation}
We have the following limit (see \cite[Lemma~2.4,Propostion~2.6]{RoviraTindel2005}),
\begin{equation}
  p(\beta)
  :=\lim_{T\to\infty}\frac{a_T(\beta)}{T}
  =\sup_{T>0}\frac{a_T(\beta)}{T}\le \sup_{T>0}\frac{\log\E[W_T(\beta)]}{T}=0.
  \label{eq:free-energy}
\end{equation}
We distinguish the two critical points:
\begin{equation}
  \beta_c:=\sup\{\beta\ge0:W_\infty(\beta)>0\ \text{a.s.}\},
  \qquad
  \bar\beta_c:=\sup\{\beta\ge0:p(\beta)=0\}.
  \label{eq:two-critical-points}
\end{equation}

\subsection{The Covariance Structure}
\label{subsec:product-kernel}

Fix exponents
\begin{equation}
  \alpha_j\in(0,1),
  \qquad \kappa:=\sum_{j=1}^d\alpha_j,
  \label{eq:kappa-definition}
\end{equation}
and suppose that there are constants \(0<c_Q\le C_Q<\infty\) such that
\begin{equation}
  c_Q\prod_{j=1}^d(1+|x_j|)^{-\alpha_j}
  \le Q(x)\le
  C_Q\prod_{j=1}^d(1+|x_j|)^{-\alpha_j},
  \qquad x\in\R^d.
  \label{eq:product-tail}
\end{equation}
A concrete positive-definite example is
\begin{equation*}
  Q_{\bm\alpha}(x):=\prod_{j=1}^d(1+x_j^2)^{-\alpha_j/2}.
  \label{eq:product-bessel-example}
\end{equation*}
The second type is radial.  We assume that for some \(\vartheta>0\), there are constants \(0<c_Q\le C_Q<\infty\) such that
\begin{equation}
  c_Q(1+|x|)^{-\vartheta}
  \le Q(x)\le C_Q(1+|x|)^{-\vartheta},
  \qquad x\in\R^d.
  \label{eq:radial-tail-class}
\end{equation}

\subsection{Main results}
\label{subsec:main-theorem}

\begin{theorem}
\label{thm:main-product}
Assume \eqref{eq:product-tail}.
\begin{enumerate}[label=\textup{(\roman*)}]
  \item If \(\kappa>2\), then
  \begin{equation}
    0<\beta_c\le\bar\beta_c<\infty.
    \label{eq:integrable-thresholds}
  \end{equation}
  \item If \(\kappa<2\), then there exist \(0<c<C<\infty\) and
  \(\beta_0>0\) such that
  \begin{equation}
    c\,\beta^{4/(2-\kappa)}
    \le -p(\beta)\le C\,\beta^{4/(2-\kappa)},
    \qquad 0<\beta\le\beta_0.
    \label{eq:main-power-law}
  \end{equation}
  In particular, \(\beta_c=\bar\beta_c=0\).

  \item If \(\kappa=2\), then
  \begin{equation}
    \beta_c=0<\bar\beta_c<\infty.
    \label{eq:critical-threshold-separation}
  \end{equation}
  More explicitly, strong disorder holds for every \(\beta>0\), while
  there exists \(\beta_0>0\) such that
  \begin{equation}
    p(\beta)=0,
    \qquad 0\le\beta\le\beta_0.
    \label{eq:critical-zero-free-energy}
  \end{equation}
\end{enumerate}
All constants may depend on \(d,\bm\alpha,c_Q,C_Q\), but not on
\(\beta\). In particular, the critical case is possible only when \(d\ge3\), since every
\(\alpha_j<1\).  
\end{theorem}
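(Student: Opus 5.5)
The proof splits according to the integrability of the replica overlap. Everything is organised around the function
\[
G(t):=\int_0^t E^{\otimes2}\big[Q(B_s-B'_s)\big]\dd s
=\int_0^t E\big[Q(\sqrt{2}\,B_s)\big]\dd s .
\]
Under \eqref{eq:product-tail} the coordinates factorise, so $E[Q(\sqrt2 B_s)]\asymp\prod_j(1+\sqrt s)^{-\alpha_j}=(1+\sqrt s)^{-\kappa}$. Hence $G(\infty)<\infty$ if and only if $\kappa>2$, $G(T)\asymp\log T$ if $\kappa=2$, and $G(T)\asymp T^{1-\kappa/2}$ if $\kappa<2$. This trichotomy drives the whole argument.

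\textbf{Case (i).} The second moment is $\E[W_T^2]=E^{\otimes2}[\exp\{\beta^2\int_0^TQ(B_s-B'_s)\dd s\}]$. Under $\kappa>2$ the potential $Q(\sqrt2\,\cdot)$ satisfies a Khas'minskii condition: $\sup_x E_x\int_0^\infty Q(\sqrt2 B_s)\dd s<\infty$, uniformly in $x$ by the product bound. Therefore the exponential moment is finite for small $\beta$, $W_T$ is bounded in $L^2$, $W_\infty>0$ with positive probability, and by the zero-one law almost surely. This gives $\beta_c>0$. For $\bar\beta_c<\infty$ I would use the fractional moment (or simply the annealed-versus-quenched comparison with $\theta<1$). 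At large $\beta$, $\E[W_T^\theta]$ decays exponentially after a change of measure that shifts the environment near the typical path at scale $1$, using $Q(0)=1$ and continuity.

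\textbf{Case (ii), upper bound on $p$ (i.e.\ lower bound on $-p$).} I use the continuous-space fractional moment method on blocks of time length $T_\beta=\beta^{-4/(2-\kappa)}$, chosen so that $\beta^2G(T_\beta)\asymp1$, and spatial boxes $\prod_j[-\sqrt{T_\beta},\sqrt{T_\beta}]$. In each block I tilt the environment by $-\delta\,X$, where $X$ is the ordered second chaos detector
\[
X=\iint_{s<t}\langle e_x,e_y\rangle\text{-weighted }\omega(\dd s,x)\omega(\dd t,y)
\]
normalised to unit variance, built via \eqref{eq:space-time-Hilbert-factorization}. Under the polymer path, the mean shift of $X$ is of order $\beta^2G(T)/\sqrt{\text{Var}}$. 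With $\beta^2 G(T_\beta)\asymp1$ this mean shift is bounded below by a constant. This drives $\E[W^\theta]$ down by a factor per block, coarse-grained over boxes. The result is $p(\beta)\le -c/T_\beta=-c\beta^{4/(2-\kappa)}$.

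\textbf{Case (ii), lower bound on $p$.} I use replica coupling (interpolation in $\beta^2$): $\partial_{\beta}$ of $a_T$ equals $-\beta\,\E\mu^{\otimes2}[\int_0^TQ(B_s-B'_s)\dd s]$. Alternatively, I use the Feynman--Kac/variational lower bound obtained by restricting the path to tubes of time length $T_\beta$. Comparing with the $L^2$ bound on each block, where $\beta^2G(T_\beta)$ is small so the block partition function is $L^2$-bounded, and using superadditivity, gives $a_{T_\beta}\ge -C$. Hence $p\ge -C/T_\beta$. Both bounds together give \eqref{eq:main-power-law}, and $\beta_c=\bar\beta_c=0$ follows.

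\textbf{Case (iii).} $p(\beta)=0$ for small $\beta$ comes from an overlap argument. Since $\E[W_T^2]\le \exp(C\beta^2\log T)=T^{C\beta^2}$, a polynomially growing second moment implies $a_T\ge -C'\beta^2\log T$ by Paley--Zygmund combined with Gaussian concentration of $\log W_T$ (variance $\le\beta^2T$ is too crude here). I would therefore instead use the replica-coupling identity together with the Feynman--Kac variational formula to show that $\E\mu^{\otimes2}[\int_0^T Q]=o(T)$. This step needs a uniform control of the two-replica Feynman--Kac exponential for the marginally integrable potential at small $\beta$. Strong disorder for all $\beta>0$ uses the ordered chaos detector on the critical scale. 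Because $\beta^2G(T)\to\infty$ logarithmically, for any $\beta$ one chooses $T$ with $\beta^2\log T$ large, and the tilt makes $\E[W_T^\theta]$ arbitrarily small. Since $W_T^\theta$ is uniformly integrable in weak disorder, this forces $W_\infty=0$.

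\textbf{Main obstacle.} The main obstacle is the critical detector in (iii). Its variance grows like $(\log T)^2$ while its mean grows like $\beta^2\log T$, so the normalisation must be chosen carefully, through a logarithmically layered multi-scale sum, to keep the signal-to-noise ratio unbounded. The note that $\kappa=2$ forces $d\ge3$ is immediate from $\alpha_j<1$.
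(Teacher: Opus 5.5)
\textbf{Gap in part (iii), strong disorder.} Part (iii) is where the proposal breaks down, on both sides. For strong disorder at every $\beta>0$ you rely on an ordered-chaos tilt that is never constructed. By your own count its mean is $\asymp\beta^2\log T$ and its standard deviation $\asymp\log T$, so the signal-to-noise ratio is of order $\beta^2$ and does not grow. The ``logarithmically layered multi-scale sum'' meant to fix this is not specified.

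There is also a structural obstruction. Suppose a penalty $g$ on $[0,T]$ had $\E[g^{-1}]\le2$ and made $\sup_{x\in I_0^T}P_x[\E_B^\beta g]$ arbitrarily small. A detector built on a corridor of width of order $\sqrt T$ around $I_0^T$, like yours in (ii) or the paper's in Sections 5--6, would give exactly this. Then the coarse-graining of Lemma~\ref{lem:integrable-fractional-reduction} (whose proof is dimension-free), together with the near/far box splitting of Proposition~\ref{eq:integrable-one-block-rho}, would yield $p(\beta)\le 2\log\rho/T<0$. That contradicts the other half of (iii). So a detector proof would have to be genuinely non-uniform in the starting point, with a planted-variance analysis the proposal does not supply.

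The paper uses no detector here. By the overlap criterion (Lemma~\ref{prop:overlap-criterion}), weak disorder forces $\int_0^\infty I_s\dd s<\infty$ a.s. Under weak disorder the quenched endpoint law satisfies a CLT (Proposition~\ref{prop:weak-endpoint-CLT}), proved via the shear identity $W_t^{(v)}(\omega)=W_t(\tau_v\omega)$ and $L^1$-continuity in the drift. Hence $I_t\ge c_0t^{-\kappa/2}\ge c_0t^{-1}$ with probability tending to one, and $\int_T^{2T}I_s\dd s\ge\frac{c_0}{2}\log 2$ with high probability. This is impossible when $\kappa\le2$.

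\textbf{Gap in part (iii), vanishing free energy.} For $p(\beta)=0$ you arrive at the right framework (replica coupling plus the Feynman--Kac variational formula). But you leave open exactly the needed ``uniform control of the two-replica Feynman--Kac exponential'', and the bound $\E[W_T^2]\le T^{C\beta^2}$ is asserted without proof. Khas'minskii-type block estimates use $\sup_xE_x\int_0^SQ(\sqrt2B_s)\dd s\le C\log S$. They only give a growth rate $\lesssim e^{-c/\beta^2}$, hence $p(\beta)\ge-Ce^{-c/\beta^2}$, never $p(\beta)=0$.

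The missing idea is the critical Hardy-type inequality $\int G_{\bm\alpha}f^2\le C_{\mathrm{form}}\norm{\nabla f}_2^2$, which is the product Pitt inequality (Lemma~\ref{lem:product-pitt}) at $\kappa=2$. It makes the variational supremum $\sup_f\{h\int Vf^2-\frac12\norm{\nabla f}_2^2\}$ vanish for $h\le(2C_{\mathrm{form}})^{-1}$. Then \eqref{eq:replica-pinning-comparison}, i.e.\ $p(\beta)\ge-\frac{e-1}{2}F_{Q(\sqrt2\,\cdot)}(2\beta^2)$, gives $p(\beta)=0$.

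\textbf{Part (ii), lower bound on $p$.} A milder form of the same issue appears here for $\kappa<2$. The step ``$L^2$-bounded block plus superadditivity gives $a_{T_\beta}\ge-C$'' needs a lower-tail estimate for $\log W_{T_\beta}$. The crude concentration bound fails here too, since $\beta^2T_\beta\to\infty$. Routing through \eqref{eq:replica-pinning-comparison} and a block Khas'minskii (or Pitt) bound $F_V(h)\le Ch^{2/(2-\kappa)}$ fixes it.

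\textbf{Parts (i) and (ii), fractional-moment bounds.} These are in line with the paper, with two differences. The paper uses a simpler first-chaos corridor tilt with explicit gain $c\beta T^{(2-\kappa)/4}$. It also takes $T_\beta=K\beta^{-4/(2-\kappa)}$ with $K$ large, so that the gain beats the entropy and tilt cost, rather than a gain merely bounded below.
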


\begin{theorem}
\label{thm:radial-regimes}
Assume \eqref{eq:radial-tail-class}.
\begin{enumerate}[label=\textup{(\roman*)}]
  \item If \(d\ge3\) and \(\vartheta=2\) then
  \begin{equation}
    \beta_c=0<\bar\beta_c<\infty.
    \label{eq:riesz-threshold-separation}
  \end{equation}
  More explicitly, strong disorder holds for every \(\beta>0\), whereas
  there is \(\beta_0>0\) such that
  \begin{equation}
    p(\beta)=0,
    \qquad 0\le\beta\le\beta_0.
    \label{eq:riesz-zero-free-energy}
  \end{equation}

  \item If \(d=2\) and \(\vartheta>2\),
  then there are \(0<c<C<\infty\) and \(\beta_0>0\) such that
  \begin{equation}
    \exp\left\{-\frac{C}{\beta^2}\right\}
    \le -p(\beta)\le
    \exp\left\{-\frac{c}{\beta^2}\right\},
    \qquad 0<\beta\le\beta_0.
    \label{eq:two-d-integrable-free-energy}
  \end{equation}
  Consequently, \( \beta_c=\bar\beta_c=0\).

  \item If \(d=2\) and \(\vartheta=2\),
  then there are \(0<c<C<\infty\) and \(\beta_0>0\) such that
  \begin{equation}
    \exp\left\{-\frac{C}{\beta}\right\}
    \le -p(\beta)\le
    \exp\left\{-\frac{c}{\beta}\right\},
    \qquad 0<\beta\le\beta_0.
    \label{eq:two-d-critical-free-energy}
  \end{equation}
  Consequently, \( \beta_c=\bar\beta_c=0\).
\end{enumerate}
All constants may depend on \(d,\bm\alpha,c_Q,C_Q\), but not on
\(\beta\).
\end{theorem}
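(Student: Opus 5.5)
The plan is to treat the three regimes of Theorem~\ref{thm:radial-regimes} through one common mechanism: the replica overlap \(\int_0^T Q(B_s-B'_s)\dd s\) of two independent Brownian paths. By \eqref{eq:Hamiltonian-covariance} we have \(\E[W_T^2]=P^{\otimes2}[\exp\{\beta^2\int_0^TQ(B_s-B'_s)\dd s\}]\). Moreover \(B-B'\) is a Brownian motion run at speed \(2\), so this quantity is governed by the Schrödinger operator \(\Delta+\beta^2Q\). All of the strong/weak and free-energy statements will be reduced to spectral properties of this operator, together with the Feynman--Kac variational formula. I will use \eqref{eq:radial-tail-class} to compare \(Q\) with \((1+|x|)^{-\vartheta}\) from above and from below.

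\emph{Part (i), \(d\ge3\), \(\vartheta=2\).} For strong disorder I will use the overlap criterion: \(W_\infty=0\) a.s.\ as soon as \(\int_0^\infty\mu_{t,\omega}^{\otimes2}[Q(B_t-B'_t)]\dd t=\infty\). To establish this divergence I plan to argue by contradiction. In weak disorder the polymer would be diffusive on average, so the overlap would decay like \(\int Q(\sqrt t\,x)\,\rho(x)\dd x\asymp t^{-1}\), whose time integral diverges logarithmically. A cleaner route is the ordered Wiener-chaos change of measure, built from the factorization \eqref{eq:space-time-Hilbert-factorization}. Concretely, I will tilt the environment along \(\sum_{k}\) of second-chaos blocks \(\iint_{s<t} Q\)-weighted increments on dyadic time scales. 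Since \(\int_1^T t^{-1}\dd t=\log T\) is unbounded, the detector has variance \(\asymp\beta^4\log T\), and this forces \(\E[W_T^{\theta}]\to0\).

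For \(p(\beta)=0\) at small \(\beta\), I will use Hardy's inequality \(\int|\nabla f|^2\ge\frac{(d-2)^2}{4}\int|x|^{-2}f^2\). From it, \(\beta^2C_Q(1+|x|)^{-2}\le\frac{(d-2)^2}{8}|x|^{-2}\) for small \(\beta\), so the top of the spectrum of \(\Delta+\beta^2Q\) equals \(0\). The free energy then follows from the replica-coupling inequality \(a_T\ge-\tfrac{\beta^2}{2}\int_0^T\E\mu^{\otimes2}_{t}[Q]\dd t\), combined with the Feynman--Kac bound on \(\E[\mu_t^{\otimes2}(Q)]\) (size-biased coupling). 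Because the generalized eigenvalue is \(0\), the overlap density is \(o(1)\), which gives \(p(\beta)=0\). The upper bound \(\bar\beta_c<\infty\) is standard: it follows from the fractional moment \(\E[W_T^\theta]\) at \(\beta\) large.

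\emph{Parts (ii) and (iii), \(d=2\).} Here the principal eigenvalue \(\lambda(\beta)\) of \(\Delta+\beta^2Q\) in \(\R^2\) is positive for every \(\beta>0\), and it satisfies
\[\lambda(\beta)\approx e^{-c/(\beta^2\int Q)}\ (\vartheta>2),\qquad \lambda(\beta)\approx e^{-c/\beta}\ (\vartheta=2).\]
In the \(\vartheta=2\) case, \(\int_{|x|<R}Q\asymp\log R\), so the effective coupling at scale \(R\) is \(\beta^2\log R\). Balancing it against the kinetic term \((\log R)^{-1}\) gives \(\log R\asymp\beta^{-1}\). For the lower bound on \(p\) I will use replica coupling plus Feynman--Kac, which gives \(-p(\beta)\lesssim\lambda(C\beta)\), i.e.\ \(\le\exp\{-c/\beta^2\}\), respectively \(\exp\{-c/\beta\}\). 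For the matching bound \(-p\ge e^{-C/\beta^2}\) (resp.\ \(e^{-C/\beta}\)) I will run the coarse-grained fractional moment method. Time is cut into blocks of length \(L\approx\lambda^{-1}\), i.e.\ \(\log L\asymp\beta^{-2}\) (resp.\ \(\beta^{-1}\)), and space into boxes of side \(\sqrt L\). In each block I apply a change of measure by an ordered second-chaos detector whose variance \(\beta^4\sum_{s<t<L}Q\)-overlap is of order one. This yields \(\E[W_{nL}^\theta]\le\epsilon^n\) and hence \(p\le-c/L\).

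The main obstacle is constructing these chaos detectors. In the marginal regimes (part (i), and part (iii) with \(\vartheta=2\)), the detector variance grows only logarithmically. So its cost \(\beta^4\log L\) must be controlled precisely against the gain, while the long-range correlations prevent exact independence between blocks. I would first try truncating \(Q\) to scale \(\sqrt L\) and bounding the leftover cross-block covariance via \eqref{eq:radial-tail-class}.
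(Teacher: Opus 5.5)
There is a genuine gap in the sharp lower bounds $-p(\beta)\ge e^{-C/\beta^{2}}$ and $-p(\beta)\ge e^{-C/\beta}$ of parts (ii) and (iii). A \emph{second}-chaos detector carries essentially no signal at block length $\log L\asymp\beta^{-2}$ (resp.\ $\beta^{-1}$).

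Take any normalized statistic $X=\mathcal I_2(F)/\sigma$. The shift $W\mapsto W+\beta h_B$ moves its mean by $\beta^2\sigma^{-1}\int_{s<t}\langle F_{s,t},e_{B_s}\otimes e_{B_t}\rangle\dd s\dd t$. In a coarse-grained scheme, the detector of a block is fixed before you know where in the box of side $\sqrt L$ the path enters; the coefficient $a_{T,g}(z)$ is a supremum over $x\in I_0^T$. So average over entrance points and apply Cauchy--Schwarz. This bounds the planted mean by $\beta^2\bigl(\int_{0<s<t<L}\overline{P}^{\otimes 2}[Q(B_s-B'_s)Q(B_t-B'_t)]\dd s\dd t\bigr)^{1/2}$, where under $\overline{P}$ the two replicas start independently and uniformly in that box.

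Since $B_s-B'_s$ then has density at most $1/L$, this integral is $O(\log L)$ for $\vartheta>2$ and $O(\Lambda(L)^3)$ for $\vartheta=2$. Hence the signal is $O(\beta^2\sqrt{\log L})=O(\beta)$, resp.\ $O(\beta^2\Lambda(L)^{3/2})=O(\beta^{1/2})$, and it vanishes. Your count ``$\beta^4\sum_{s<t<L}$ overlap $\asymp 1$'' tacitly starts both replicas at the same point, where the double integral is of order $(\log L)^2$. That extra logarithm is not available uniformly over a box. In the paper's notation this is the case $q=1$ of Lemma~\ref{lem:detector-estimates}: the planted mean is of order $\beta\sqrt{\beta^2\log u}=\beta\sqrt{aA}\to0$. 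A quadratic detector only starts to work when $\log L\gtrsim\beta^{-4}$ (resp.\ $\beta^{-4/3}$). That yields only $-p(\beta)\ge e^{-C\beta^{-4}}$ (resp.\ $e^{-C\beta^{-4/3}}$), not the theorem.

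The missing idea is the Berger--Lacoin multilinear change of measure. One uses an ordered Wiener chaos of \emph{growing} order $q+1$, with $q=\lceil(\log\log T)^2\rceil$, built from heat-kernel chains with gaps in $(\underline r_q,u)$, $u=T^a$, rooted in the corridor $\mathcal C_T$. Its planted mean is of order $\beta(c_1\beta^2\log u)^{q/2}$ (resp.\ $\beta\sqrt{\Lambda(u)}\,(c\beta\Lambda(u))^q$). This diverges once $\beta^2\log u=aA$ (resp.\ $\beta\log u=aA$) with $A$ large, because $q\gg\log(1/\beta)\asymp\log\log T$.

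The heart of the proof, absent from your sketch, is a bound on the planted variance that is uniform in $B$. You expand the shifted integrators over subsets of $\{0,\dots,q\}$. Each path factor is deleted at cost $\log(2+qu)$ (resp.\ $\Lambda(2qu)^2$). The paired chains are then contracted around one retained anchor, so every cross term carries a factor $qu/T$. The condition $q\log q=o(\log T)$ is what lets this factor absorb the $\binom{q+1}{k}^2C^q$ combinatorics. For $\vartheta=2$, the corridor indicators must be kept, at cost $\Lambda(T)^2$, since $Q\notin L^1$.

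Three smaller points:
\begin{itemize}
\item Blocks are disjoint in time and the noise is white in time. So the block penalties are exactly independent ($\E[G_Y^{-1}]=C_g^m$), and no truncation of $Q$ is needed.
\item In part (i), your first route to $\beta_c=0$ (overlap criterion plus diffusivity) is the paper's, and it is sound. Tightness of $\widehat\nu_t^\omega$ in probability already suffices, and it follows from $\E[W_t\,\nu_t^\omega(A)]=P(B_t\in A)$ together with $W_\infty>0$. Your second-chaos ``cleaner route'' is unsupported as stated.
\item For $p(\beta)=0$ there is no direct Feynman--Kac bound on $\E[I_t]$, since $I_t$ is a ratio. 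What you need is Lacoin's interpolation inequality $p(\beta)\ge-\tfrac{e-1}{2}F_{Q(\sqrt2\,\cdot)}(2\beta^2)$, with Hardy applied at the doubled coupling.
\end{itemize}
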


\subsection{Separation and coincidence for strong and very strong}
\label{subsec:literature-strong-verystrong}

For the directed polymer with i.i.d.\ space--time disorder,
strong disorder holds at every positive
inverse temperature in dimensions
$d=1,2$ by \cite{CarmonaHu2002,CometsShigaYoshida2003,CometsYoshida2006}.  The free-energy
estimates of \cite{CometsVargas2006,Lacoin2010} show that disorder is
in fact very strong throughout this regime. 
The success in dimensions \(d=1,2\) gave rise to the expectation that the equivalence between strong and very strong disorder would continue to hold in higher dimensions.  Equality of the two critical points was conjectured
in \cite{CarmonaHu2006,CometsYoshida2006}.  Junk and Lacoin first proved it
for the directed polymer when the i.i.d. disorder is bounded from
above \cite{JunkLacoin2024}.  Their subsequent work
\cite{JunkLacoin2026} removes the upper-bound assumption on the environment
and allows general random walks. In contrast, much less work has been done in the correlated setting. Lacoin~\cite{Lacoin2011} determined the order of the free energy for the Brownian polymer with radially long-range correlated spatial disorder in $d\ge 2$, for certain decay exponents $\vartheta$ in \eqref{eq:radial-tail-class}. In particular, when $\vartheta<2$, one has $\beta_c=\bar\beta_c$.

Recently, however, examples of non-coincidence have also been found. Viveros~\cite{Viveros2023} considered a directed polymer, whose random walk has very heavy-tailed increments, in i.i.d. environment  in \(d=1\). He showed that the normalized quenched free energy vanishes for every \(\beta>0\), while strong disorder may still occur. An earlier geometric mechanism was identified by Cosco, Seroussi and Zeitouni~\cite{CoscoSeroussiZeitouni2021}, who studied directed polymers on general graphs. In particular, for upwardly biased random walks on certain supercritical Galton–Watson trees, they exhibited regimes in which strong disorder holds but very strong disorder does not, yielding \(\beta_c<\bar\beta_c\).
A more recent geometric example is provided by Nitzschner~\cite{Nitzschner2025} and Cottini and Nitzschner~\cite{CottiniNitzschner2025}. Nitzschner proved that, for almost every realization of a supercritical percolation cluster, strong disorder holds at every positive inverse temperature. Cottini and Nitzschner subsequently showed that there is a non-empty interval of positive inverse temperatures in which strong disorder holds but very strong disorder fails.

In this paper, Theorem~\ref{thm:main-product}(iii) and
Theorem~\ref{thm:radial-regimes}\textup{(i)} provide a further mechanism. Here the reference path is ordinary Brownian motion, the
underlying geometry is homogeneous Euclidean space, and only the spatial
correlation of the environment is changed. Consequently,
\begin{equation}
  W_\infty(\beta)=0
  \quad\text{and}\quad
  p(\beta)=0,
  \qquad\text{for all sufficiently small }\beta>0,
  \label{eq:intro-new-separation-mechanism}
\end{equation}
which provides the new non-coincidence cases. 

\subsection{Free-energy asymptotics}

We first briefly recall some results in i.i.d. environments.  In the
\(1+1\)-dimensional model, Lacoin~\cite{Lacoin2010} proved that the
normalized quenched free energy is of order \(-\beta^4\) at high
temperature.  Under suitable assumptions on the disorder,
Nakashima~\cite{Nakashima2019} obtained the sharp asymptotic
\[
  p(\beta)\sim -\frac{\beta^4}{6},
  \qquad \beta\downarrow0.
\]
The concentration assumption in that result was subsequently removed in
\cite{Nakashima2025}.

The \(1+2\)-dimensional model is marginal.  Berger and
Lacoin~\cite{BergerLacoin2017} proved that
\[
  \lim_{\beta\downarrow0}\beta^2\log[-p(\beta)]=-\pi.
\]
More recently, Berger and Nakajima~\cite{BergerNakajima2026} obtained the
matching estimate
\[
  -p(\beta)\asymp
  \exp\left\{-\frac{\pi}{\sigma^2(\beta)}\right\},
  \]
  where, \(
  \sigma^2(\beta)
  =\exp\{\lambda(2\beta)-2\lambda(\beta)\}-1
  \) and \(\lambda(\beta):=\ln\E[e^{\beta\omega(1,0)}].
\)

For \(d\ge3\), the free energy vanishes throughout the weak-disorder
phase:
\[
  p(\beta)=0,
  \qquad 0\le\beta\le\beta_c.
\]
Thus there is no non-trivial high-temperature asymptotic as
\(\beta\downarrow0\).  The relevant asymptotic behavior occurs instead
near the critical point.  More precisely, Lacoin~\cite{LacoinSmooth2025}
proved that
\[
  \lim_{u\downarrow0}
  \frac{\log[-p(\beta_c+u)]}{\log u}
  =+\infty.
\]
Equivalently, for every \(k>0\),
\[
  -p(\beta_c+u)=o(u^k),
  \qquad u\downarrow0.
\]

We now turn to spatially correlated environments.  For the Brownian
polymer model, Lacoin~\cite{Lacoin2011} proved that a radial covariance
with decay \(|x|^{-\vartheta}\), \(\vartheta<2\), gives \(-p(\beta)\asymp\beta^{4/(2-\vartheta)}\) for \(d\ge2\). In \(d=1\), he also proved \( -p(\beta)\asymp\beta^4\) when the spatial covariance \(Q\) is integrable.  Although the cases \(d=2,\vartheta\ge2\) and \(d=3,\vartheta=2\) are left unanswered in the same work,  the scales are proposed as (see the statements in Remark~1.5 there  )
\begin{align}
  -p(\beta)&\asymp\exp\{-C\beta^{-2}\},
  &\text{for}~& d=2,\ \vartheta>2
     \ \text{or}\ d=3,\ \vartheta=2;
  \notag\\
  -p(\beta)&\asymp\exp\{-C\beta^{-1}\},
  &\text{for}~& d=2,\ \vartheta=2.
  \label{eq:lacoin-remark-scales}
\end{align}

In this paper, we clarify the answer to some extent in these regimes.  In the three-dimensional critical case, strong disorder holds for every
\(\beta>0\), but \(p(\beta)=0\) for all sufficiently small \(\beta\).  Thus the behavior in \(d=3,\vartheta=2\) differs from the scale anticipated in
Remark~1.5 of Lacoin~\cite{Lacoin2011}.  In dimension \(2\), we prove
the scale in the first line of \eqref{eq:lacoin-remark-scales} for
\(\vartheta>2\), and the scale in the second line for \(\vartheta=2\). It is worth noting that, in computing the order of the free energy in $d=2$, we develop a continuous implementation of the multilinear change-of-measure strategy of Berger and Lacoin~\cite{BergerLacoin2017}.

\subsection{Organization}
Section~\ref{sec:lower-bounds} collects the second-moment and pinning estimates needed below. It then applies the replica-coupling comparison
to obtain the upper bound on \(-p(\beta)\) for \(\kappa<2\), as well as
the vanishing of the free energy at the critical product exponent. Section~\ref{sec:fractional} implements the coarse-graining and fractional-moment argument to prove the matching lower bound for \(\kappa<2\) and negativity of the free energy at sufficiently large coupling. Section~\ref{sec:critical-upper} proves the overlap criterion and the endpoint central limit theorem under weak disorder, and then establishes \(\beta_c=0\) for \(\kappa\le2\); the same argument gives the radial case \(d\ge3,\vartheta=2\). Section~\ref{sec:two-dimensional-integrable} deals with the radial regime \(d=2,\vartheta>2\) by an ordered Wiener-chaos change of measure. In Section~\ref{sec:two-dimensional-critical} we consider the critical case \(d=2,\vartheta=2\), where the non-integrability of the covariance requires additional estimates for partial contractions. Appendix~\ref{app:analytic} collects the analytic inequalities used throughout.

\section{Second moments, variational formula, and replica coupling}
\label{sec:lower-bounds}

Write
\begin{equation}
  G_{\bm\alpha}(x):=
  \prod_{j=1}^d(1+|x_j|)^{-\alpha_j}.
  \label{eq:Galpha}
\end{equation}
This section proves Theorem~\ref{thm:main-product}(i), the upper estimate on
\(-p(\beta)\) in Theorem~\ref{thm:main-product}(ii), and the identity
\eqref{eq:critical-zero-free-energy}.

\subsection{The second moment and weak disorder}
\label{subsec:weak-disorder}

\begin{proof}
Let \(B^1,B^2\) be independent Brownian motions.  Integrating the Gaussian
environment, we obtain
\begin{align}
  \E\big[W_T(\beta)^2\big]
  &=
  P^{\otimes2}\left[
    \exp\left\{
      \beta^2\int_0^TQ(B_s^1-B_s^2)\dd s
    \right\}
  \right] \notag\\
  &=
  P\left[
    \exp\left\{
      \beta^2\int_0^TQ(\sqrt2 B_s)\dd s
    \right\}
  \right].
  \label{eq:second-moment}
\end{align}
By Lemma~\ref{lem:product-gaussian-convolution},
\begin{equation*}
  \sup_{x\in\R^d}P_x\big[Q(\sqrt2B_s)\big]
  \le C(1+s)^{-\kappa/2}.
\end{equation*}
If \(\kappa>2\), then
\begin{equation*}
  \eta_\infty
  :=\sup_{x\in\R^d}
  P_x\left[\int_0^\infty Q(\sqrt2B_s)\dd s\right]
  <\infty.
\end{equation*}
By Lemma~\ref{lem:upper}, applied with
\(\lambda=\beta^2\), gives
\begin{equation*}
  \sup_{T\ge0}\E[W_T(\beta)^2]
  \le\frac{1}{1-\beta^2\eta_\infty}<\infty
  \qquad\text{whenever }\beta^2\eta_\infty<1.
\end{equation*}
The martingale \(W_T(\beta)\) then converges in \(L^2\), and its limit has
mean one.  The zero-one law gives \(W_\infty(\beta)>0\) almost surely. This proves Theorem~\ref{thm:main-product}(i).
\end{proof}

\subsection{Feynman--Kac variational formula for pinning free energy}

\begin{lemma}
\label{lem:variational}
Let \(V:\R^d\to[0,\infty)\) be bounded and Borel measurable.  Then, for \(h\ge0\), the limit
\begin{equation}
  \lim_{T\to\infty}\frac1T
  \log P_0\left[
    \exp\left\{h\int_0^T V(B_s)\dd s\right\}
  \right]
  \label{free-energy}
\end{equation}
exists and is denoted by $F_V(h)$, which is referred to as the pinning free energy. Furthermore, we have
\begin{equation}
  F_V(h)
  =
  \sup_{\substack{f\in H^1(\R^d)\\ \norm{f}_2=1}}
  \left\{
    h\int_{\R^d}V(x)f(x)^2\dd x
    -\frac12\norm{\nabla f}_2^2
  \right\}.
  \label{eq:pinning-variational}
\end{equation}
\end{lemma}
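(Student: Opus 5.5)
The plan is to write \(\lambda(h)\) for the right-hand side of \eqref{eq:pinning-variational}, i.e. the top of the spectrum of the self-adjoint operator \(\mathcal L_h:=\tfrac12\Delta+hV\) on \(L^2(\R^d)\). I will show that \(\lambda(h)\) is both an upper bound for the \(\limsup\) and a lower bound for the \(\liminf\) of \(\tfrac1T\log Z_T\), where \(Z_T:=P_0[\exp\{h\int_0^TV(B_s)\dd s\}]\). Since \(0\le V\le\|V\|_\infty\), the form \(f\mapsto h\int Vf^2-\tfrac12\|\nabla f\|_2^2\) is a bounded perturbation of \(-\tfrac12\|\nabla f\|_2^2\). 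Hence \(\mathcal L_h\) is self-adjoint on \(H^2\) with \(\sup\operatorname{spec}\mathcal L_h=\lambda(h)\in[0,h\|V\|_\infty]\), and by the Feynman--Kac formula \(e^{t\mathcal L_h}\) is the positivity-preserving semigroup \(P_tf(x)=P_x[e^{h\int_0^tV(B_s)\dd s}f(B_t)]\).

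\emph{Lower bound.} Fix \(\varepsilon>0\) and choose \(f\in C_c^\infty\), \(f\ge0\), \(\|f\|_2=1\), with form value at least \(\lambda(h)-\varepsilon\); this is possible by density and because \(|f|\) has the same form value. By the spectral theorem and Jensen's inequality applied to the spectral measure \(\mu_f\) of \(f\),
\[
\langle f,P_Tf\rangle=\int e^{T\lambda}\mu_f(\dd\lambda)\ge \exp\{T\langle f,\mathcal L_hf\rangle\}\ge e^{T(\lambda(h)-\varepsilon)} .
\]
To pass from \(\langle f,P_Tf\rangle\) to the start point \(0\), I will use the Markov property over a unit time. Let \(R\) be such that \(\operatorname{supp}f\subset B(0,R)\). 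For \(x\in B(0,R)\) I bound \(P_x[e^{h\int_0^T V}f(B_T)]\le\|f\|_\infty e^{h\|V\|_\infty}\,\inf_{|y|\le R}\!\bigl(p_1(0,y)\bigr)^{-1}\,Z_{T+1}\) up to a constant. This is done by running the process from \(0\) for one unit of time, keeping only the paths that land near \(x\), using the positive lower bound \(V\ge0\), and comparing heat-kernel densities. That step needs a small smoothing argument to handle the dependence on \(x\), but only constants independent of \(T\) arise. Integrating against \(f(x)\dd x\) gives \(Z_{T+1}\ge c_R\,e^{T(\lambda(h)-\varepsilon)}\), so \(\liminf_T\tfrac1T\log Z_T\ge\lambda(h)\).

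\emph{Upper bound.} This is the main obstacle, since \(\mathbf 1\notin L^2\) and \(\|P_T\|_{L^2\to L^2}=e^{T\lambda(h)}\) cannot be applied directly to the starting point \(0\). I will first split \(Z_T\le P_0[e^{h\int_0^1V}\,(P_{T-1}\mathbf 1)(B_1)]\le e^{h\|V\|_\infty}\|p_1\|_2\,\|(P_{T-1}\mathbf 1)\mathbf 1_{B(0,L)}\|_2+(\text{error})\). To control the contribution of \(B_1\) or of the path leaving the ball \(B(0,L_T)\) with \(L_T=T^2\), I will localize: on the event \(\sup_{s\le T}|B_s|\le L_T\), whose complement has probability \(e^{-cT^3}\), small against \(e^{hT\|V\|_\infty}\), replace \(\mathbf 1\) by \(\mathbf 1_{B(0,2L_T)}\in L^2\). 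Then
\[
Z_T\le C\,e^{T\lambda(h)}\,\|\mathbf 1_{B(0,2L_T)}\|_2+e^{hT\|V\|_\infty-cT^3}\le C\,T^{d}e^{T\lambda(h)}+o(1),
\]
using \(\|P_T\|_{2\to2}=e^{T\lambda(h)}\) and Cauchy--Schwarz. The polynomial factor disappears after taking \(\tfrac1T\log\), giving \(\limsup\le\lambda(h)\). Combining the two bounds shows that the limit exists and equals \(\lambda(h)\), which is \eqref{eq:pinning-variational}.
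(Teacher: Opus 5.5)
Your argument is correct, and it differs from the paper's proof only in the upper bound.

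\textbf{Lower bound.} This is the paper's argument: Jensen on the spectral measure of a nonnegative normalized $f\in C_c^\infty$, followed by a unit-time Markov comparison with the start at $0$. The paper phrases the comparison as $\langle f,e^{T\mathcal L_h}f\rangle\le C_f(e^{(T+1)\mathcal L_h}\mathbf 1)(0)$. No smoothing in $x$ is needed there. With $u_T(x):=P_x[e^{h\int_0^TV(B_s)\dd s}]$ and $V\ge0$, you get directly
$\langle f,P_Tf\rangle\le\|f\|_\infty^2\int_{B(0,R)}u_T\le\|f\|_\infty^2(\inf_{|y|\le R}p_1(y))^{-1}\int p_1u_T\le\|f\|_\infty^2(\inf_{|y|\le R}p_1(y))^{-1}Z_{T+1}$.

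\textbf{Upper bound.} The paper writes $\mathcal L_h=hM+(\tfrac12\Delta-hW)$ with $M=\|V\|_\infty$ and $W=M-V\ge0$. It then invokes \cite[Lemma~3.2]{ArendtBatty1996} to bound the growth of $(e^{T\mathcal L_h}\mathbf 1)(0)$ by the $L^2$ spectral bound. You prove this step by hand:
\begin{itemize}
\item confining the path to $B(0,T^2)$ costs at most $e^{hT\|V\|_\infty-cT^3}\to0$;
\item this lets you replace the terminal function $\mathbf 1$ by $\mathbf 1_{B(0,2T^2)}\in L^2$;
\item the Markov property at time $1$, Cauchy--Schwarz against $p_1$, and $\|e^{T\mathcal L_h}\|_{2\to2}=e^{T\lambda(h)}$ then give $Z_T\le CT^de^{T\lambda(h)}+o(1)$.
\end{itemize}
Keep the order straight when you write this up. The cutoff must sit on the endpoint $B_T$, which is what the localization provides. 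It cannot sit on $B_1$, as in your first displayed split, because $\|(P_{T-1}\mathbf 1)\mathbf 1_{B(0,L)}\|_2$ is not controlled by the operator norm.

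\textbf{Comparison.} Your route is self-contained and elementary. It uses the boundedness of $V$ essentially, through the crude bound $e^{hT\|V\|_\infty}$ on the escape event, but that is exactly the lemma's hypothesis. The paper outsources this step to a general result on Schr\"odinger semigroups with nonnegative absorbing potentials, which does not depend on such a crude bound.
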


\begin{proof}
Let
\[
  \mathcal L_h:=\frac12\Delta+hV
\]
be the Schrödinger operator on \(L^2(\R^d)\), understood through its quadratic
form
\[
  \mathcal E_h(f,f)
  :=
  h\int_{\R^d}V(x)f(x)^2\dd x
  -\frac12\int_{\R^d}|\nabla f(x)|^2\dd x,
  \qquad f\in H^1(\R^d).
\]
Since \(V\) is bounded, this is a closed form bounded from above.  Hence
\(\mathcal L_h\) is self-adjoint and bounded from above, and the spectral
theorem gives
\begin{equation}
  \sup \sigma(\mathcal L_h)
  =
  \sup_{\substack{f\in H^1(\R^d)\\ \norm{f}_2=1}}
  \mathcal E_h(f,f).
  \label{spectral}
\end{equation}

By the Feynman--Kac formula, the semigroup generated by
\(\mathcal L_h\) is
\[
  \bigl(e^{T\mathcal L_h}\varphi\bigr)(x)
  =
  P_x\left[
    \exp\left\{h\int_0^T V(B_s)\,\dd s\right\}
    \varphi(B_T)
  \right]
\]
for every bounded measurable \(\varphi\).  In particular, taking \(\varphi\equiv 1\)
\[
  P_0\left[
    \exp\left\{h\int_0^T V(B_s)\,\dd s\right\}
  \right]
  =
  \bigl(e^{T\mathcal L_h}\mathbf 1\bigr)(0).
\]

Let
\[
  M:=\|V\|_\infty,
  \qquad
  W:=M-V\ge0,
  \qquad
  \widetilde{\mathcal L}_h
  :=
  \frac12\Delta-hW.
\]
Then
\[
  \mathcal L_h
  =
  hM+\widetilde{\mathcal L}_h.
\]
By \cite[Lemma~3.2]{ArendtBatty1996}, consequently,
\[
  \limsup_{T\to\infty}
  \frac1T
  \log
  P_0\left[
    \exp\left\{h\int_0^T V(B_s)\,\dd s\right\}
  \right]
  \le
  hM+\sup\sigma(\widetilde{\mathcal L}_h)
  =
  \sup\sigma(\mathcal L_h).
\]

It remains to prove the reverse inequality.  Let
\(f\in C_c^\infty(\R^d)\) be nonnegative and normalized in \(L^2\).
By the spectral theorem and Jensen's inequality,
\[
  \langle f,e^{T\mathcal L_h}f\rangle
  \ge
  \exp\left\{
    T\langle f,\mathcal L_h f\rangle
  \right\}.
\]
Choose \(f\) so that
\[
  \langle f,\mathcal L_h f\rangle
  \ge
  \sup\sigma(\mathcal L_h)-\varepsilon.
\]
Since the Feynman--Kac kernel is strictly positive, and
\(f\) has compact support, there exists \(C_f<\infty\) such that
\[
  \langle f,e^{T\mathcal L_h}f\rangle
  \le
  C_f\bigl(e^{(T+1)\mathcal L_h}\mathbf 1\bigr)(0).
\]
Therefore,
\[
  \liminf_{T\to\infty}
  \frac1T
  \log
  \bigl(e^{T\mathcal L_h}\mathbf 1\bigr)(0)
  \ge
  \sup\sigma(\mathcal L_h)-\varepsilon.
\]
Letting \(\varepsilon\downarrow0\), we obtain
\[
  \lim_{T\to\infty}
  \frac1T
  \log
  \bigl(e^{T\mathcal L_h}\mathbf 1\bigr)(0)
  =
  \sup\sigma(\mathcal L_h).
\]
Combining this identity with \eqref{spectral} proves
\eqref{eq:pinning-variational}.
\end{proof}

\begin{proposition}
\label{prop:pinning-scale}
Assume \(0<\kappa<2\) and
\[
  c_VG_{\bm\alpha}(x)\le V(x)\le C_VG_{\bm\alpha}(x).
\]
Then there exist \(0<c<C<\infty\) such that, for all sufficiently small \(h>0\),
\begin{equation}
  c h^{2/(2-\kappa)}
  \le F_V(h)\le
  C h^{2/(2-\kappa)}.
  \label{eq:pinning-scale}
\end{equation}
\end{proposition}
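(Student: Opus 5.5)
We work directly with the variational formula \eqref{eq:pinning-variational} of Lemma~\ref{lem:variational}. Since \(F_V\) is monotone in \(V\), we may replace \(V\) by \(c_VG_{\bm\alpha}\) for the lower bound and by \(C_VG_{\bm\alpha}\) for the upper bound. The heuristic scale comes from balancing the two terms of the variational functional. A test function spread over a box of side \(L\) has kinetic energy \(\asymp L^{-2}\) and potential energy \(\asymp hL^{-\kappa}\), because \(G_{\bm\alpha}\) averaged over \([-L,L]^d\) is \(\asymp\prod_j L^{-\alpha_j}\). Since \(\kappa<2\), the maximum of \(hL^{-\kappa}-L^{-2}\) occurs at \(L\asymp h^{-1/(2-\kappa)}\), where the value is \(\asymp h^{2/(2-\kappa)}\).

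\textbf{Lower bound.} Fix a nonnegative \(\varphi\in C_c^\infty(\R^d)\) supported in \([-1,1]^d\) with \(\norm{\varphi}_2=1\), and put \(f_L(x)=L^{-d/2}\varphi(x/L)\). Then
\[
\tfrac12\norm{\nabla f_L}_2^2=\tfrac12 L^{-2}\norm{\nabla\varphi}_2^2 .
\]
For \(L\ge1\) and \(|y_j|\le1\) we have \(1+L|y_j|\le 2L\), so \(G_{\bm\alpha}(Ly)\ge 2^{-\kappa}L^{-\kappa}\). Hence
\[
\int G_{\bm\alpha}f_L^2\ge 2^{-\kappa}L^{-\kappa}.
\]
This gives
\[
F_V(h)\ge c_V2^{-\kappa}hL^{-\kappa}-CL^{-2}.
\]
Choosing \(L=Ah^{-1/(2-\kappa)}\) with \(A\) a large constant yields \(F_V(h)\ge c h^{2/(2-\kappa)}\) for small \(h\).

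\textbf{Upper bound.} This is the main step: we need a uniform inequality
\[
\int G_{\bm\alpha}f^2\le \varepsilon^{-\kappa}\ \ (\text{roughly})+ C\varepsilon^{2-\kappa}\norm{\nabla f}_2^2
\]
for all normalized \(f\), optimized over a scale parameter. Concretely, for \(R\ge1\) we aim to show
\[
\int G_{\bm\alpha}f^2\le C\bigl(R^{-\kappa}+R^{2-\kappa}\norm{\nabla f}_2^2\bigr),\qquad \norm{f}_2=1.
\]
Plugging this into the variational formula gives
\[
F_V(h)\le \sup_{\norm f_2=1}\Bigl\{C h R^{-\kappa}+\norm{\nabla f}_2^2\bigl(ChR^{2-\kappa}-\tfrac12\bigr)\Bigr\}.
\]
Choosing \(R=(4Ch)^{-1/(2-\kappa)}\) kills the gradient term and leaves \(F_V(h)\le C'h^{2/(2-\kappa)}\).

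To prove the inequality, split \(G_{\bm\alpha}\le G^{(R)}+CR^{-\kappa}\mathbf 1\), where \(G^{(R)}:=(G_{\bm\alpha}-R^{-\kappa}\ldots)_+\). A cleaner route is a Hardy-type bound. Write \(G_{\bm\alpha}(x)\le\prod_j g_j(x_j)\) with \(g_j(t)=(1+|t|)^{-\alpha_j}\). One option is a product Hardy/Sobolev inequality: by Hölder in each coordinate together with the one-dimensional Gagliardo–Nirenberg inequality \(\norm{u}_\infty^2\le\norm u_2\norm{u'}_2\), the contribution from the region \(|x_j|\le R\) has \(\int_{-R}^R g_j\asymp R^{1-\alpha_j}\). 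The alternative I would try first is the semigroup/resolvent form. Bound
\[
\int Gf^2\le \langle f,(\lambda-\tfrac12\Delta)f\rangle\cdot\sup_x\int_0^\infty e^{-\lambda s}P_x[G(B_s)]\dd s
\]
(the standard Khasminskii/Schur bound for nonnegative potentials). By Lemma~\ref{lem:product-gaussian-convolution}, \(\sup_xP_x[G(B_s)]\le C(1+s)^{-\kappa/2}\). Since \(\kappa<2\), this gives
\[
\int_0^\infty e^{-\lambda s}(1+s)^{-\kappa/2}\dd s\le C\lambda^{\kappa/2-1}
\]
for \(\lambda\le1\). With \(\lambda=R^{-2}\) we get
\[
\int Gf^2\le CR^{2-\kappa}\bigl(R^{-2}+\tfrac12\norm{\nabla f}_2^2\bigr),
\]
which is exactly the desired inequality.

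The obstacle is justifying the operator bound
\[
\langle f,Gf\rangle\le \norm{G^{1/2}(\lambda-\tfrac12\Delta)^{-1}G^{1/2}}\;\langle f,(\lambda-\tfrac12\Delta)f\rangle .
\]
The Birman–Schwinger operator \(G^{1/2}(\lambda-\tfrac12\Delta)^{-1}G^{1/2}\) has a positive kernel. Its norm is therefore controlled by the Schur test with test function \(G^{1/2}\), or directly by \(\sup_x\int (\lambda-\tfrac12\Delta)^{-1}(x,y)G(y)\dd y\). This is precisely the supremum above, so the bound closes.
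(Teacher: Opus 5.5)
Your proof is correct. The lower bound is the same scaling argument as in the paper; the paper places the bump in $(1,2)^d$ rather than $[-1,1]^d$, which is immaterial.

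For the upper bound you take a genuinely different route. The paper derives the scale-invariant inequality $\int G_{\bm\alpha}f^2\le C\norm{\nabla f}_2^{\kappa}\norm{f}_2^{2-\kappa}$ (Lemma~\ref{lem:product-pitt}) from the one-dimensional Pitt inequality applied coordinatewise in Fourier variables, followed by $\prod_j|\xi_j|^{\alpha_j}\le|\xi|^\kappa$ and H\"older. You obtain the same inequality from a Birman--Schwinger/Schur-test estimate that uses only the heat-kernel decay of Lemma~\ref{lem:product-gaussian-convolution}; to see this, optimize your bound $C\lambda^{\kappa/2-1}(\lambda+\tfrac12\norm{\nabla f}_2^2)$ at $\lambda=\norm{\nabla f}_2^2$.

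Two remarks on your write-up. The Schur test with weight $G^{1/2}$ is legitimate here, since the kernel is symmetric and nonnegative and $G>0$. The passage from the operator norm to the form bound is the identity $\norm{G^{1/2}H^{-1/2}}^2=\norm{G^{1/2}H^{-1}G^{1/2}}$ with $H=\lambda-\tfrac12\Delta$, which you should state explicitly. You should also delete the abandoned fragments (the $G^{(R)}$ splitting and the Hardy/Gagliardo--Nirenberg sketch), which are incomplete and unnecessary.

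What your route buys is independence from Fourier analysis and from the product structure. It works for any bounded potential with $\sup_xP_x[V(B_s)]\le C(1+s)^{-\kappa/2}$ and $\kappa<2$, for example radial kernels with $\vartheta<2$.

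What it loses is the endpoint. At $\kappa=2$ one has $\int_0^\infty e^{-\lambda s}(1+s)^{-1}\dd s\sim\log(1/\lambda)$. Moreover, \eqref{eq:product-gaussian-lower} with $a=0$ shows $\int_0^\infty P_0[G_{\bm\alpha}(B_s)]\dd s=\infty$. So the weight $G^{1/2}$ only gives a bound with a logarithmic loss and cannot produce the Hardy-type estimate \eqref{eq:critical-product-form}. The paper's Pitt inequality, being scale-invariant, covers $\kappa<2$ and $\kappa=2$ at once, and it is reused precisely for Proposition~\ref{prop:critical-pinning-threshold}.
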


\begin{proof}
Let \(f\in H^1(\R^d)\) with \(\norm f_2=1\), by Lemma~\ref{lem:variational}. And Lemma~\ref{lem:product-pitt} imply
\begin{equation*}
  \int_{\R^d}V(x)f(x)^2\dd x
  \le C\norm{\nabla f}_2^\kappa.
\end{equation*}
Writing \(A=\norm{\nabla f}_2^2\), the expression in curely braces in
\eqref{eq:pinning-variational} is bounded above by
\[
  ChA^{\kappa/2}-\frac12A.
\]
Its maximum over \(A\ge0\) is at most \(Ch^{2/(2-\kappa)}\), which proves
the upper bound.

For the reverse bound, choose a nonnegative
\(g\in C_c^\infty((1,2)^d)\) with \(\norm g_2=1\), and set
\(f_R(x)=R^{-d/2}g(x/R)\).  If \(R\ge1\), then
\begin{align*}
  \int_{\R^d}V(x)f_R(x)^2\dd x
  &=\int_{\R^d}V(Ru)g(u)^2\dd u
  \ge cR^{-\kappa},\\
  \norm{\nabla f_R}_2^2
  &=R^{-2}\norm{\nabla g}_2^2.
\end{align*}
Choose \(R=Lh^{-1/(2-\kappa)}\).  Then both terms in
\eqref{eq:pinning-variational} have order \(h^{2/(2-\kappa)}\).  Taking
\(L\) sufficiently large makes the positive coefficient larger than the
negative one, and proves the lower bound.
\end{proof}

At the critical exponent, the same inequality implies the existence of a non-empty region near the origin where \eqref{free-energy} vanishes.

\begin{proposition}
\label{prop:critical-pinning-threshold}
Assume \(\kappa=2\), and suppose that \(0\le V\le C_VG_{\bm\alpha}\).
There is \(C_{\mathrm{form}}<\infty\) such that
\begin{equation}
  \int_{\R^d}V(x)f(x)^2\dd x
  \le C_{\mathrm{form}}\norm{\nabla f}_2^2,
  \qquad f\in H^1(\R^d).
  \label{eq:critical-product-form}
\end{equation}
Consequently,
\begin{equation}
  F_V(h)=0
  \qquad\text{for }0\le h\le
  h_{\mathrm{form}}:=\frac{1}{2C_{\mathrm{form}}}.
  \label{eq:critical-pinning-zero}
\end{equation}
\end{proposition}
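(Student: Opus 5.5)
The first step is the quadratic form bound \eqref{eq:critical-product-form}. I would derive it from the same analytic inequality used in the proof of Proposition~\ref{prop:pinning-scale}, namely Lemma~\ref{lem:product-pitt}, now applied at \(\kappa=2\). I will argue it directly as a product Hardy-type inequality, in case that lemma is only stated for \(\kappa<2\).

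Since \(0\le V\le C_VG_{\bm\alpha}\) and \((1+|x_j|)^{-\alpha_j}\le|x_j|^{-\alpha_j}\), it suffices to show
\[
\int_{\R^d}\prod_{j=1}^d|x_j|^{-\alpha_j}f(x)^2\dd x\le C\norm{\nabla f}_2^2
\qquad\text{when }\sum_j\alpha_j=2,\ \alpha_j\in(0,1).
\]
In Fourier variables, the weight \(\prod_j|x_j|^{-\alpha_j}\) acts as convolution with the kernel \(\prod_j|\xi_j|^{\alpha_j-1}\), up to constants, because each one-dimensional Riesz potential is well defined for \(\alpha_j\in(0,1)\). Pitt's inequality, or equivalently a weighted Stein--Weiss/Hardy inequality, would give the bound by \(\int|\xi|^{\kappa}|\hat f|^2=\norm{\nabla f}_2^2\) when \(\kappa=2\).

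A more elementary route is the generalized Hölder inequality. Choose exponents \(p_j=2/\alpha_j\), so that \(\sum_j 1/p_j=\kappa/2=1\), and bound
\[
\int\prod_j|x_j|^{-\alpha_j}f^2\dd x\le\prod_j\Bigl(\int|x_j|^{-2}f^2\dd x\Bigr)^{\alpha_j/2}.
\]
The one-dimensional Hardy inequality \(\int|x_j|^{-2}f^2\le4\int|\partial_jf|^2\) is valid for functions vanishing at \(x_j=0\), but it fails in general in one dimension. This is the main obstacle. To get around it, I would apply the product Hardy bound by grouping coordinates: split the coordinates into blocks of dimension at least \(3\) and use the \(d'\)-dimensional Hardy inequality \(\int|y|^{-2}f^2\le\frac{4}{(d'-2)^2}\int|\nabla_yf|^2\), together with \(\prod_{j\in\text{block}}|x_j|^{-\alpha_j}\)-type weights.

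Failing that, I would rely on the earlier Lemma~\ref{lem:product-pitt}. That lemma gives \(\int Vf^2\le C\norm{\nabla f}_2^{\kappa}\) under normalization \(\norm f_2=1\), and at \(\kappa=2\) this scale-invariant form is exactly \eqref{eq:critical-product-form} after homogeneity. This is why \(d\ge3\) is needed: each \(\alpha_j<1\) and \(\sum_j\alpha_j=2\) force at least three coordinates.

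Given \eqref{eq:critical-product-form}, the conclusion follows from Lemma~\ref{lem:variational}. For \(\norm f_2=1\) and \(h\le h_{\mathrm{form}}\),
\[
h\int Vf^2-\tfrac12\norm{\nabla f}_2^2\le\bigl(hC_{\mathrm{form}}-\tfrac12\bigr)\norm{\nabla f}_2^2\le0,
\]
so \(F_V(h)\le0\). On the other hand, \(V\ge0\) gives \(F_V(h)\ge\limsup\frac1T\log1=0\); alternatively, take the spread-out test functions \(f_R\) with \(\norm{\nabla f_R}_2\to0\). Hence \(F_V(h)=0\).
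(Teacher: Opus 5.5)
Your committed route is the paper's proof. Lemma~\ref{lem:product-pitt} is stated for \(\kappa\le2\), so at \(\kappa=2\) the factor \(\norm{f}_2^{2-\kappa}\) drops out and \eqref{eq:critical-product-form} follows directly; Lemma~\ref{lem:variational} and \(F_V(h)\ge0\) (from \(V\ge0\)) then finish the argument exactly as you wrote. Your block-Hardy detour would not have worked as sketched, because \(|x_j|\le|y|\) for \(j\) in a block \(S\) gives \(\prod_{j\in S}|x_j|^{-\alpha_j}\ge|y|^{-\sum_{j\in S}\alpha_j}\), which is the wrong direction for a radial Hardy bound, but that detour is not needed.
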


\begin{proof}
Equation \eqref{eq:critical-product-form} is
\eqref{eq:Galpha-pitt} with \(\kappa=2\).  Substitution into
\eqref{eq:pinning-variational} gives, for every normalized \(f\),
\[
  h\int V f^2-\frac12\norm{\nabla f}_2^2
  \le
  \left(hC_{\mathrm{form}}-\frac12\right)
  \norm{\nabla f}_2^2.
\]
The right-hand side is nonpositive when
\(h\le(2C_{\mathrm{form}})^{-1}\).  Hence \(F_V(h)\le0\), while
\(F_V(h)\ge0\), proving \eqref{eq:critical-pinning-zero}.
\end{proof}

\subsection{Gaussian replica coupling}
\label{subsec:replica-coupling}

We now compare the quenched free energy with the homogeneous pinning free
energy.  This method was introduced for directed polymers in
\cite{Lacoin2010} and was used for the spatially correlated Brownian polymer model in \cite{Lacoin2011}. Following the computation in \cite[Section~3.1]{Lacoin2011}, which does not depend on the particular form of the covariance kernel, we skip the details and arrive at \eqref{eq:replica-finite-T}

For \(r\in[0,1]\), define
\begin{equation*}
  \Phi_T(r)
  :=
  \frac1T\E\log
  P\left[
    \exp\left\{
      \sqrt r\,\beta H_T(B)-\frac{r\beta^2}{2}T
    \right\}
  \right].
\end{equation*}
Thus \(\Phi_T(0)=0\) and \(\Phi_T(1)=a_T(\beta)/T\), see Section~\ref{subsec:model} for the definition. For
\(\lambda\ge0\), set
\begin{align*}
  \Psi_T(r,\lambda)
  &:=
  \frac1{2T}\E\log P^{\otimes2}
  \exp\Bigg\{
    \sqrt r\,\beta\big(H_T(B^1)+H_T(B^2)\big) \notag
    +\beta^2\int_0^T
      \big(\lambda Q(B_s^1-B_s^2)-r\big)\dd s
  \Bigg\}.
  \label{eq:Psi-definition}
\end{align*}

\begin{proof}
We have
\begin{equation}
  \frac{a_T(\beta)}T=\Phi_T(1)
  \ge-(e-1)\Psi_T(0,2).
  \label{eq:replica-finite-T}
\end{equation}
Since \(B^1-B^2\) has the law of \(\sqrt2B\),
\begin{equation*}
  \Psi_T(0,2)
  =
  \frac1{2T}\log
  P\left[
    \exp\left\{
      2\beta^2\int_0^TQ(\sqrt2B_s)\dd s
    \right\}
  \right].
\end{equation*}
Letting \(T\to\infty\) in \eqref{eq:replica-finite-T} gives the comparison
\begin{equation}
  p(\beta)
  \ge
  -\frac{e-1}{2}
  F_{Q(\sqrt2\,\cdot)}(2\beta^2).
  \label{eq:replica-pinning-comparison}
\end{equation}

If \(\kappa<2\), Proposition~\ref{prop:pinning-scale} applied to
\(V(x)=Q(\sqrt2x)\) yields
\begin{equation}
  p(\beta)\ge-C\beta^{4/(2-\kappa)}.
  \label{eq:power-lower-free-energy}
\end{equation}
It gives the upper estimate on
\(-p(\beta)\) in Theorem~\ref{thm:main-product}(ii).

If \(\kappa=2\), Proposition~\ref{prop:critical-pinning-threshold} gives a
constant \(C_{\mathrm{form}}\) such that
\[
  F_{Q(\sqrt2\,\cdot)}(h)=0
  \qquad\text{for }0\le h\le(2C_{\mathrm{form}})^{-1}.
\]
Hence \eqref{eq:replica-pinning-comparison} and \(p(\beta)\le0\) imply
\begin{equation}
  p(\beta)=0
  \qquad\text{whenever }
  0\le\beta\le\frac{1}{2\sqrt{C_{\mathrm{form}}}}.
  \label{eq:critical-p-zero-proof}
\end{equation}
This proves \eqref{eq:critical-zero-free-energy}.
\end{proof}

\begin{remark}
    When \(d \ge 3\) and \(\vartheta = 2\), it suffices to replace the Pitt-type inequality used in Proposition~\ref{prop:critical-pinning-threshold} by Lemma~\ref{lem:hardy}, and the covariance kernel by the radial potential \eqref{eq:radial-tail-class}; then the analogues of \eqref{eq:critical-pinning-zero} and \eqref{eq:replica-pinning-comparison} hold, and consequently \eqref{eq:riesz-zero-free-energy} follows.
\end{remark}
\section{Fractional moments and measure transformation}
\label{sec:fractional}

We prove the lower estimate on \(-p(\beta)\) in
\eqref{eq:main-power-law}, and then prove negativity of \(p(\beta)\) at
sufficiently large \(\beta\) for every \(\kappa\), which implies \(\bar\beta_c<\infty\). This section follows  the fractional moments + change of measure + coarse-graining framework of~\cite[Section~3.2]{Lacoin2011}, with the
radial estimates replaced by product-kernel estimates. 

\subsection{Coarse paths and a change of measure}
\label{subsec:coarse-graining}

Fix \(T>1\) and \(m\in\N\), and write \(t=mT\).  We split the
time interval \([0,mT]\) into \(m\) time blocks of length \(T\):
\[
  \mathcal I_k^T:=((k-1)T,kT],
  \qquad k=1,\ldots,m.
\]
The coarse graining records only the endpoint of the Brownian path at the
end of each block.  For \(y\in\Z^d\), set
\begin{equation}
  I_y^T := \sqrt T\,(y + [0,1)^d).
  \label{eq:terminal-box}
\end{equation}
Thus a coarse path \(Y=(y_1,\ldots,y_m)\in(\Z^d)^m\), with \(y_0=0\),
means that
\[
  B_{kT}\in I_{y_k}^T,
  \qquad k=1,\ldots,m.
\]
For such a coarse path, define
\begin{align}
  W_{mT}(Y)
  &:=
  P\Bigg[
  \exp\left\{
    \beta H_{mT}(B)-\frac{\beta^2}{2}mT
  \right\}
  \prod_{k=1}^m\1_{\{B_{kT}\in I_{y_k}^T\}}
  \Bigg].
  \label{eq:coarse-path-partition}
\end{align}
Since the boxes \((I_y^T)_{y\in\Z^d}\) partition \(\R^d\),
\[
  W_{mT}(\beta)=\sum_Y W_{mT}(Y).
\]
Therefore, for \(0<\theta<1\), by \((\sum_{j}a_j)^{\theta}\le \sum_{j}a^{\theta}_j\)
\begin{equation}
  \E[W_{mT}(\beta)^\theta]
  \le
  \sum_Y\E[W_{mT}(Y)^\theta].
  \label{eq:coarse-path-fractional}
\end{equation}

We now define the larger spatial sets on which the environment will be
tilted.  Fix a large constant \(L_0>4\).  For \(a\in\Z^d\), let
\begin{equation}
  \mathcal C_a^T
  :=
  \sqrt T\left(a+[-L_0,L_0+1]^d\right).
  \label{eq:hard-corridor}
\end{equation}
The set \(I_a^T\) records the coarse position at one block endpoint,
whereas \(\mathcal C_a^T\) is a corridor: if a block starts from
\(I_a^T\), then a Brownian path with typical fluctuation \(O(\sqrt T)\)
remains inside \(\mathcal C_a^T\).  Hence, on the \(k\)-th block of the
coarse path \(Y\), we tilt the environment in
\[
  \mathcal A_{k,Y}
  :=
  \mathcal I_k^T\times\mathcal C_{y_{k-1}}^T.
\]

Set
\begin{equation}
  \sigma_T^2
  :=
  T\iint_{\mathcal C_0^T\times\mathcal C_0^T}
  Q(u-v)\dd u\dd v.
  \label{eq:corridor-variance}
\end{equation}
For \(1\le k\le m\), define the normalized corridor average
\[
  \Omega_{k,Y}
  :=
  \frac1{\sigma_T}
  \int_{(k-1)T}^{kT}
  \int_{\mathcal C_{y_{k-1}}^T}
  \omega(\dd s,u)\dd u.
\]
Each \(\Omega_{k,Y}\) is a standard Gaussian random variable.  Moreover,
the variables \(\Omega_{1,Y},\ldots,\Omega_{m,Y}\) are independent, since
the corresponding time intervals are disjoint.

For this fixed coarse path \(Y\), we change the environment law by
\begin{equation}
  \frac{\dd\Ptilde_Y}{\dd\Pp}
  =
  \exp\left\{
    -\sum_{k=1}^m\Omega_{k,Y}-\frac m2
  \right\}.
  \label{eq:global-hard-change}
\end{equation}
Changing measure and applying
H\"older's inequality gives
\begin{equation}
  \E[W_{mT}(Y)^\theta]
  \le
  \exp\left\{
    \frac{m\theta}{2(1-\theta)}
  \right\}
  \big(\Etilde_Y W_{mT}(Y)\big)^\theta.
  \label{eq:global-holder}
\end{equation}
For later use, denote by \(\widetilde{\mathbb P}\) the one-block tilted
law
\begin{equation}
  \frac{\dd\widetilde{\mathbb P}}{\dd\mathbb P}
  =
  \exp\left\{
    -\frac1{\sigma_T}
    \int_0^T\int_{\mathcal C_0^T}\omega(\dd s,u)\dd u
    -\frac12
  \right\}.
\end{equation}

\subsection{Energy lowering for paths in one block}
\label{subsec:corridor-estimates}


\begin{lemma}
\label{lem:hard-corridor-estimates}
Assume \eqref{eq:product-tail}.  There exists a constant \(c>0\), depending
on \(L_0\), such that if \(B_0\in I_0^T\) and
\[
  \mathcal G_T
  :=
  \left\{
    \sup_{0\le s\le T}|B_s-B_0|_\infty
    \le (L_0-3)\sqrt T
  \right\},
\]
then, under the one-block tilted measure associated with
\(\mathcal C_0^T\),
\begin{equation}
  -\Etilde\left[\int_0^T\omega(\dd s,B_s)\right]
  \ge
  cT^{(2-\kappa)/4}
  \quad\text{on }~\mathcal G_T.
  \label{eq:hard-corridor-gain}
\end{equation}
\end{lemma}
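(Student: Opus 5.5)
By Girsanov's theorem for the isonormal process $W$, under $\widetilde{\mathbb P}$ the environment is $\omega=\widetilde\omega+m$, where $\widetilde\omega$ has the original law. The drift $m$ is the covariance of $\omega$ with the tilting exponent $-\sigma_T^{-1}\int_0^T\int_{\mathcal C_0^T}\omega(\dd s,u)\dd u$. For a fixed path $B$ we then get
\[
  -\Etilde\left[\int_0^T\omega(\dd s,B_s)\right]
  =\frac1{\sigma_T}\int_0^T\int_{\mathcal C_0^T}Q(B_s-u)\dd u\dd s .
\]
This identity follows from \eqref{eq:Hamiltonian-covariance}-type computations, i.e.\ $\E[H_T(B)\,\omega(\1_{[0,T]}\1_{\mathcal C_0^T})]$. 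The claim therefore reduces to two deterministic estimates: a lower bound on the numerator, uniform over paths in $\mathcal G_T$, and an upper bound $\sigma_T\le C T^{(2+\kappa')/4}$ of the right order.

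\textbf{Lower bound on the numerator.} On $\mathcal G_T$ with $B_0\in I_0^T$ we have $B_s\in\sqrt T\,[-(L_0-3),L_0-2]^d$ for all $s\le T$. Hence the cube $B_s+\sqrt T[-1,1]^d$ lies inside $\mathcal C_0^T$, since $L_0-2+1\le L_0+1$ and $-(L_0-3)-1\ge -L_0$. Restricting the $u$-integral to this cube and using the lower bound in \eqref{eq:product-tail} gives
\[
  \int_{\mathcal C_0^T}Q(B_s-u)\dd u\ge c_Q\prod_{j=1}^d\int_{-\sqrt T}^{\sqrt T}(1+|v|)^{-\alpha_j}\dd v\ge c\,T^{(d-\kappa)/2},
\]
because $\alpha_j<1$. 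Integrating over $s\in[0,T]$, the numerator is at least $cT^{1+(d-\kappa)/2}$.

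\textbf{Upper bound on the variance.} Substituting $u-v=w$ and using the upper bound in \eqref{eq:product-tail},
\[
  \sigma_T^2\le T\,|\mathcal C_0^T|\int_{[-(2L_0+1)\sqrt T,(2L_0+1)\sqrt T]^d}C_Q\prod_j(1+|w_j|)^{-\alpha_j}\dd w .
\]
Here $|\mathcal C_0^T|=(2L_0+1)^dT^{d/2}$, and the integral is bounded by $C_{L_0}T^{(d-\kappa)/2}$, again since each $\alpha_j<1$. Thus $\sigma_T^2\le C_{L_0}T^{1+d-\kappa/2}$, so $\sigma_T\le C T^{1/2+d/2-\kappa/4}$.

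\textbf{Combining.} Dividing the two bounds gives a gain of at least
\[
  cT^{1+d/2-\kappa/2-1/2-d/2+\kappa/4}=cT^{(2-\kappa)/4},
\]
with $c$ depending on $L_0$, which is \eqref{eq:hard-corridor-gain}.

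The main point to verify carefully is the Girsanov/Cameron–Martin step. The tilt is a linear functional $W(g)$ with $g(s)=\sigma_T^{-1}\1_{[0,T]}(s)\int_{\mathcal C_0^T}e_u\dd u\in L^2(\R_+;\mathfrak H_Q)$, and this $g$ has unit norm exactly by \eqref{eq:corridor-variance}, so the density is properly normalized. The shift acting on $W(h_B)$ is then $-\langle h_B,g\rangle$, which is justified through the isonormal representation \eqref{eq:H-as-isonormal}. Because the tilted expectation is exactly this Cameron–Martin shift, it holds pathwise for each fixed $B$, which is what allows the bound "on $\mathcal G_T$". The remaining steps are elementary one-dimensional integrals, and the exponent arithmetic above is the only thing to double-check.
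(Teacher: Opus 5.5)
Your proposal is correct and is essentially the paper's proof. It uses the Cameron--Martin identity $-\Etilde[\int_0^T\omega(\dd s,B_s)]=\sigma_T^{-1}\int_0^T\int_{\mathcal C_0^T}Q(u-B_s)\dd u\dd s$, a lower bound $cT^{1+(d-\kappa)/2}$ on the numerator for paths in $\mathcal G_T$ (your explicit cube $B_s+\sqrt T[-1,1]^d\subset\mathcal C_0^T$ is a clean way to get this), and the bound $\sigma_T^2\le CT^{1+d-\kappa/2}$, which is the only half of the paper's two-sided estimate \eqref{eq:corridor-double-order} that is actually needed. The stray exponent $(2+\kappa')/4$ in your first paragraph is a typo, and your later computation $\sigma_T\le CT^{1/2+d/2-\kappa/4}$ corrects it.
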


\begin{proof}
For
\(R\ge1\), since \(\alpha_j\in(0,1)\),
\[
  \int_{[-R,R]^d}G_{\bm\alpha}(u)\dd u
  =
  \prod_{j=1}^d
  \int_{-R}^R(1+|u_j|)^{-\alpha_j}\dd u_j
  \asymp R^{d-\kappa}.
\]
Hence, by \eqref{eq:product-tail}, uniformly for
\(x\in\sqrt T[-L_0+2,L_0-1]^d\),
\begin{equation}
  \int_{\mathcal C_0^T}Q(u-x)\dd u
  \ge
  cT^{(d-\kappa)/2}.
  \label{eq:corridor-single-lower}
\end{equation}
Similarly,
\begin{equation}
  \iint_{\mathcal C_0^T\times\mathcal C_0^T}
  Q(u-v)\dd u\dd v
  \asymp
  T^{d-\kappa/2}.
  \label{eq:corridor-double-order}
\end{equation}

By the Cameron--Martin formula, on one block,
\[
  \widetilde{\E}[\omega(\dd s,x)]
  =
  -\frac1{\sigma_T}
  \int_{\mathcal C_0^T}Q(u-x)\dd u\,\dd s.
\]
Therefore, for a fixed path \(B\),
\begin{equation}
  -\Etilde\left[\int_0^T\omega(\dd s,B_s)\right]
  =
  \frac1{\sigma_T}
  \int_0^T\int_{\mathcal C_0^T}
  Q(u-B_s)\dd u\dd s.
  \label{eq:hard-mean-response}
\end{equation}
On \(\mathcal G_T\), since \(B_0\in I_0^T\), we have
\[
  B_s\in \sqrt T[-L_0+2,L_0-1]^d,
  \qquad 0\le s\le T.
\]
Thus \eqref{eq:corridor-single-lower} gives
\[
  \int_0^T\int_{\mathcal C_0^T}Q(u-B_s)\dd u\dd s
  \ge
  cT^{1+(d-\kappa)/2}.
\]
On the other hand, by the definition of \(\sigma_T\) and
\eqref{eq:corridor-double-order},
\[
  \sigma_T^2
  =
  T
  \iint_{\mathcal C_0^T\times\mathcal C_0^T}
  Q(u-v)\dd u\dd v
  \asymp
  T^{1+d-\kappa/2}.
\]
Substituting these two estimates into \eqref{eq:hard-mean-response} yields
\[
  -\Etilde\left[\int_0^T\omega(\dd s,B_s)\right]
  \ge
  c\frac{T^{1+(d-\kappa)/2}}
          {T^{(1+d-\kappa/2)/2}}
  =
  cT^{(2-\kappa)/4}.
\]
This proves \eqref{eq:hard-corridor-gain}.
\end{proof}

\subsection{Upper bound for the second term in \eqref{eq:global-holder}}
For \(z\in\Z^d\), let
\[
g_T(z) := \sup_{x\in I_0^T}P_x(B_T\in I_z^T), \qquad
h_{T,L_0}(z) := \sup_{x\in I_0^T} P_x(B_T\in I_z^T,\mathcal G_T^c).
\]
Gaussian tails and Dominated Convergence Theorem imply
\begin{equation}
  \sum_{z\in\Z^d}g_T(z)^\theta<\infty
  \qquad\text{and}\qquad
  \lim_{L_0\to\infty}
  \sum_{z\in\Z^d}h_{T,L_0}(z)^\theta
  =0,
  \label{ghT:estimate}
\end{equation}
uniformly in \(T\). Let \(z_k=y_k-y_{k-1}\).  The Markov property and spatial translation invariance, under \(\Ptilde_Y\), give
\begin{equation}
  \Etilde_YW_{mT}(Y)
  \le
  \prod_{k=1}^m\Gamma_T(z_k),
  \label{eq:shifted-block-product}
\end{equation}
where
\begin{equation*}
  \Gamma_T(z)
  :=
  \sup_{x\in I_0^T}
  P_x\left[
    \exp\left\{
      \beta\Etilde\int_0^T\omega(\dd s,B_s)
    \right\}
    \1_{\{B_T\in I_z^T\}}
  \right].
\end{equation*}
Since $Y \longleftrightarrow Z=(z_1, \ldots, z_m)$ is a bijection, raising to the power \(\theta\) and summing over the coarse path \(Y\) in \eqref{eq:shifted-block-product},
\begin{equation}
    \sum_{Y} [\Etilde_YW_{mT}(Y)]^{\theta}\le \sum_{Z}\prod_{k=1}^m\Gamma_T(z_k)^{\theta}=(\sum_{z\in \Z^d} \Gamma_T(z)^{\theta})^m
    \label{bijection transformation}
\end{equation}
Splitting according to \(\mathcal G_T\) and using
\eqref{eq:hard-corridor-gain},
\begin{equation*}
  \Gamma_T(z)
  \le
  \exp\left\{-c\beta T^{(2-\kappa)/4}\right\}g_T(z)
  +h_{T,L_0}(z).
\end{equation*}
Since \((a+b)^\theta\le a^\theta+b^\theta\),
\begin{equation}
  \sum_{z\in\Z^d}\Gamma_T(z)^\theta
  \le
  \e^{-c\theta\beta T^{(2-\kappa)/4}}
  \sum_zg_T(z)^\theta
  +
  \sum_zh_{T,L_0}(z)^\theta.
  \label{eq:Gamma-sum}
\end{equation}

\subsection{The proof of the upper bound of Theorem \ref{thm:main-product} (ii)}
\label{subsec:power-upper-completion}

Choose first \(L_0\) large so that the second term in
\eqref{eq:Gamma-sum} is smaller than
\(
\frac14\exp\{-\theta/[2(1-\theta)]\}
\).
Next choose \(K>1\) large so  that
$  T_\beta:=K\beta^{-4/(2-\kappa)}$ 
makes the first term smaller than \(\frac14\exp\{-\theta/[2(1-\theta)]\}
\).  Indeed,
\(
\beta T_\beta^{(2-\kappa)/4}=K^{(2-\kappa)/4}
\),
which is independent of \(\beta\) and tends to infinity with \(K\).
Thus
\begin{equation*}
  \exp\left\{
    \frac{\theta}{2(1-\theta)}
  \right\}
  \sum_z\Gamma_{T_\beta}(z)^\theta
  =:\rho<1/2.
\end{equation*}

Combining \eqref{eq:coarse-path-fractional},
\eqref{eq:global-holder}, \eqref{eq:shifted-block-product}, and \eqref{bijection transformation} gives
\begin{equation}
  \E[W_{mT_\beta}(\beta)^\theta]
  \le\rho^m.
  \label{eq:hard-fractional-decay}
\end{equation}
Therefore
\begin{equation*}
  p(\beta)
  \le
  \liminf_{m\to\infty}
  \frac1{\theta mT_\beta}
  \log\E[W_{mT_\beta}(\beta)^\theta]
  \le
  \frac{\log\rho}{\theta T_\beta}
  \le
  -c\beta^{4/(2-\kappa)}.
\end{equation*}
Together with \eqref{eq:power-lower-free-energy}, this proves
Theorem~\ref{thm:main-product}(ii).

\subsection{The proof of \(\bar\beta_c<\infty\)}
\label{subsec:critical-large-beta}

The same one-block change of measure also shows that the very strong
disorder critical point is finite for every product kernel covered by
\eqref{eq:product-tail}.  Indeed, the proof of
Lemma~\ref{lem:hard-corridor-estimates} only requires \(\kappa<d\), which follows from \(\alpha_j<1\) for all \(j\). We fix block length \(T_0=2\).  Then \eqref{eq:Gamma-sum} gives
\begin{equation}
  \sum_{z\in\Z^d}\Gamma_{T_0}(z)^\theta
  \le
  e^{-c\theta\beta}\sum_{z\in\Z^d}g_{T_0}(z)^\theta
  +\sum_{z\in\Z^d}h_{T_0,L_0}(z)^\theta .
  \label{eq:critical-large-beta-gamma}
\end{equation}
Set
\[
  K_\theta:=\exp\left\{\frac{\theta}{2(1-\theta)}\right\}.
\]
By \eqref{ghT:estimate}, we may first choose \(L_0\) so large that
\[
  K_\theta\sum_{z\in\Z^d}h_{T_0,L_0}(z)^\theta<\frac14 .
\]
With this \(L_0\) fixed, the first term in
\eqref{eq:critical-large-beta-gamma} tends to \(0\) as
\(\beta\to\infty\).  Hence there exists \(\beta_1<\infty\) such that, for
all \(\beta\ge\beta_1\),
\[
  K_\theta e^{-c\theta\beta}
  \sum_{z\in\Z^d}g_{T_0}(z)^\theta<\frac14 .
\]
Consequently,
\begin{equation}
  K_\theta\sum_{z\in\Z^d}\Gamma_{T_0}(z)^\theta
  \le \frac12 .
  \label{eq:large-beta-one-block-contraction}
\end{equation}
Then \eqref{eq:hard-fractional-decay} 
implies
\[
  \E[W_{mT_0}(\beta)^\theta]\le 2^{-m}.
\]
By Jensen's inequality,
\[
  \frac1{mT_0}\E\log W_{mT_0}(\beta)
  \le
  \frac1{\theta mT_0}\log \E[W_{mT_0}(\beta)^\theta]
  \le
  -\frac{\log 2}{\theta T_0}.
\]
Letting \(m\to\infty\), we obtain
\begin{equation}
  p(\beta)<0,
  \qquad \beta\ge\beta_1.
  \label{eq:critical-large-beta-negative}
\end{equation}
Thus \(\bar\beta_c<\infty\). It gives the finiteness of \(\bar\beta_c\) in Theorem~\ref{thm:main-product}. 

\begin{remark}
It is worth noting that the above computations remain valid for the covariance given by the radial potential \eqref{eq:radial-tail-class}, up to changes in the parameters in \eqref{eq:corridor-single-lower} and \eqref{eq:corridor-double-order}; the conclusion \(\bar\beta_c<\infty\) in Theorem~\ref{thm:radial-regimes}(i) follows analogously.
\end{remark}

\section{Proof of \(\beta_c=0\) in Theorem~\ref{thm:main-product}(ii) and (iii)}
\label{sec:critical-upper}

In this section we shall prove that \(\beta_c=0\) when \(\kappa\le2\).  To this end, let
\begin{equation}\label{terminal distribution}
  \nu_t^\omega(A)
  :=\mu_{t,\omega}^{\beta}(B_t\in A),
  \qquad A\subset{\mathcal B}(\R^d),
\end{equation}
be the quenched endpoint law, see \eqref{eq:polymer-measure} for the definition.  Its replica overlap is
\begin{equation}
  I_t
  :=\iint_{\R^d\times\R^d}
  Q(x-y)\nu_t^\omega(\dd x)\nu_t^\omega(\dd y)
  =
  \mu_{t,\omega}^{\beta,\otimes2}
  \big[Q(B_t^1-B_t^2)\big].
  \label{eq:instantaneous-overlap}
\end{equation}
Since \(Q\) is positive definite and \(Q(0)=1\), one has
\(0\le I_t\le1\).

\begin{lemma}[Overlap criterion]
\label{prop:overlap-criterion}
For every \(\beta>0\), consider the following three statements:
\begin{enumerate}[ leftmargin=0pt, labelsep=5pt, align=left]
  \item \(W_\infty(\beta)>0\) \quad \text{almost surely};
  \item \(\displaystyle \int_0^\infty I_s\,\dd s<\infty \quad\text{almost surely}\);
  \item \(\displaystyle W_t(\beta)\longrightarrow W_\infty(\beta) \quad\text{in }L^1(\Pp)\).
\end{enumerate}
Then \textup{(1)} and \textup{(2)} are equivalent, and both are implied by
\textup{(3)}. This is Proposition~2.8 of Rovira and
Tindel~\cite{RoviraTindel2005}.
\end{lemma}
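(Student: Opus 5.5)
The plan is the standard semimartingale analysis of \(\log W_t\), as in Rovira--Tindel and Comets--Shiga--Yoshida. The first step is to compute the martingale decomposition of \(W_t=W_t(\beta)\) in the filtration \(\mathcal F_t\). Itô's formula applied to \(\exp\{\beta H_t(B)-\beta^2t/2\}\), integrated over \(P\), gives
\[
  \dd W_t=\beta\,P\Big[e^{\beta H_t(B)-\beta^2t/2}\,\omega(\dd t,B_t)\Big],
\]
which is a continuous martingale. By \eqref{eq:Hamiltonian-covariance}, applied infinitesimally, its bracket is
\[
  \dd\langle W\rangle_t=\beta^2W_t^2\,I_t\,\dd t .
\]
Write \(M_t:=\int_0^t W_s^{-1}\dd W_s\), so that \(\langle M\rangle_t=\beta^2\int_0^tI_s\dd s\). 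Itô's formula then gives
\[
  \log W_t=M_t-\tfrac12\langle M\rangle_t .
\]
Since \(W\) is strictly positive and continuous, this identity holds pathwise for all \(t\).

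The second step proves (2)\(\Rightarrow\)(1). On the event \(\{\langle M\rangle_\infty<\infty\}\), the local martingale \(M_t\) converges almost surely to a finite limit. Hence \(\log W_t\) converges to a finite limit, and \(W_\infty>0\) there. Under (2) this event has probability one, which gives (1).

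The third step proves (1)\(\Rightarrow\)(2). On the event \(\{\langle M\rangle_\infty=\infty\}\), the law of large numbers for continuous local martingales (a Dambis--Dubins--Schwarz time change) gives \(M_t/\langle M\rangle_t\to0\). Then
\[
  \log W_t=\langle M\rangle_t\Big(\frac{M_t}{\langle M\rangle_t}-\tfrac12\Big)\to-\infty,
\]
so \(W_\infty=0\) on this event. Consequently
\[
  \{W_\infty>0\}\subset\Big\{\int_0^\infty I_s\dd s<\infty\Big\}
\]
up to a null set, and (1) implies (2). A zero-one law, obtained from the tail triviality of the white-in-time environment, shows that \(\{W_\infty>0\}\) has probability \(0\) or \(1\), so the two phases are genuinely dichotomous.

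The last step proves (3)\(\Rightarrow\)(1). \(L^1\) convergence gives \(\E W_\infty=1\), so \(\Pp(W_\infty>0)>0\), and the zero-one law upgrades this to probability one.

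I expect the main obstacle to be justifying the stochastic calculus in this infinite-dimensional, spatially correlated setting. One must exchange \(P\) with the Itô integral in the \(\mathcal H_Q\)-isonormal framework, that is, use a stochastic Fubini theorem. This is legitimate because the integrand \(e_{B_s}\) has norm \(1\) in \(\mathfrak H_Q\) and all exponential moments are finite. One must also identify the bracket as \(\beta^2W_t^2I_t\), which is exactly where \(Q(0)=1\) and positive definiteness enter. Since the statement is Proposition~2.8 of \cite{RoviraTindel2005}, I would cite it for these technical points and only sketch the decomposition above.
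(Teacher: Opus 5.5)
Your proposal is correct. The identity $\log W_t = M_t - \frac12\langle M\rangle_t$ with $\langle M\rangle_t=\beta^2\int_0^t I_s\,\dd s$ gives $\{W_\infty>0\}=\{\int_0^\infty I_s\,\dd s<\infty\}$ up to null sets; this is the standard Comets--Shiga--Yoshida-type argument behind Proposition~2.8 of Rovira--Tindel, which the paper only cites and does not reproduce. The zero-one law is not needed for (1)$\Leftrightarrow$(2); it is needed only in (3)$\Rightarrow$(1), to upgrade $\Pp(W_\infty>0)>0$ to probability one.
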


\subsection{Endpoint diffusivity under weak disorder}
\label{subsec:weak-endpoint-CLT}

We need an endpoint central limit theorem. The corresponding theorem for lattice polymers is
\cite[Theorem~1.2]{CometsYoshida2006}.
For completeness, we give a short proof.

We begin by verifying the uniform integrability needed in the sequel. In the case of lattice polymers, this was proved in \cite[Proposition~3.1]{CometsYoshida2006}; for the short-range continuum polymers, the corresponding result is \cite[Proposition~4.3]{mukherjee2016weak}.

\begin{lemma}
\label{lem:weak-UI}
In the present Brownian polymer model,
\begin{equation}
  W_\infty>0~\ \text{a.s.}
  \quad\Longleftrightarrow\quad
  (W_t)_{t\ge0}\ \text{is uniformly integrable}.
  \label{eq:weak-UI-equivalence}
\end{equation}
In particular, under weak disorder,
\begin{equation}
  W_t\longrightarrow W_\infty
  \quad\text{in }L^1(\Pp),
  \qquad
  \E W_\infty=1.
  \label{eq:weak-L1}
\end{equation}
\end{lemma}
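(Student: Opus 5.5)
The easy direction is standard. If $(W_t)$ is uniformly integrable, then the almost sure convergence $W_t\to W_\infty$ upgrades to $L^1$ convergence, so $\E W_\infty=1$. Hence $\Pp(W_\infty>0)>0$. The event $\{W_\infty>0\}$ is a tail event for the environment: the time increments of $W$ on disjoint intervals are independent, and $W_\infty>0$ does not depend on the environment on any finite time window. So a zero--one law gives $W_\infty>0$ almost surely. Concretely, I would use the Markov decomposition
\[
W_{t+s}=\int \nu^{\omega}_{s}\text{-weighted}\ W_s\cdot W_t\circ\theta_{s,B_s}\,\dd(\ldots)
\]
to show that $\{W_\infty=0\}$ is measurable with respect to the environment after time $s$, for every $s$. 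The converse direction is where the work lies.

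For the converse I would follow the lattice proof of \cite[Proposition~3.1]{CometsYoshida2006}, as adapted in \cite[Proposition~4.3]{mukherjee2016weak}. The first step is the time-shift decomposition
\begin{equation*}
W_{t+s}=W_s\int_{\R^d}\nu_s^\omega(\dd x)\,W_t\circ\theta_{s,x},
\end{equation*}
where $\theta_{s,x}$ shifts the environment in space-time. Letting $t\to\infty$ gives
\[
W_\infty=W_s\int\nu_s^\omega(\dd x)\,W_\infty\circ\theta_{s,x}
\]
almost surely. Here the shifted limits $W_\infty\circ\theta_{s,x}$ are independent of $\mathcal F_s$ and have the law of $W_\infty$. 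Measurability in $x$ needs a continuous version, obtained by Kolmogorov's criterion from the spatial continuity of $Q$. Taking conditional expectations gives
\[
\E[W_\infty\mid\mathcal F_s]=W_s\,\E W_\infty.
\]
Under weak disorder $\E W_\infty>0$ by the zero--one law, and $\E W_\infty\le1$ by Fatou. So $W_s=\E[W_\infty\mid\mathcal F_s]/\E W_\infty$, which exhibits $(W_s)$ as a closed martingale and hence makes it uniformly integrable. Taking expectations then forces $\E W_\infty=1$. The $L^1$ convergence in \eqref{eq:weak-L1} follows from uniform integrability and almost sure convergence.

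The main obstacle is to justify the identity $\E[W_\infty\circ\theta_{s,x}\mid\mathcal F_s]=\E W_\infty$ inside the $\nu_s^\omega$-integral. This requires exchanging the limit $t\to\infty$ with the spatial integral, and it requires a jointly measurable version of $(x,\omega)\mapsto W_\infty\circ\theta_{s,x}$. I would avoid the limit exchange by Fatou: from $W_{t+s}=W_s\int\nu_s(\dd x)W_t\circ\theta_{s,x}$ and Fatou's lemma, conditionally on $\mathcal F_s$,
\[
\E[W_\infty\mid\mathcal F_s]\le W_s\,\E W_\infty .
\]
For the reverse inequality I would use $\liminf$ plus Fubini applied to the nonnegative integrand $W_\infty\circ\theta_{s,x}$, together with the almost sure identity above, whose integrand is nonnegative. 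Both inequalities then combine to give equality. Joint measurability follows because $W_t\circ\theta_{s,x}$ is continuous in $x$ for each $t$, being an $L^2$-continuous Gaussian functional with a continuous modification. Therefore the $\liminf$ over rational $t$ is jointly measurable.
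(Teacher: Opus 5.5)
You have the same architecture as the paper: decomposition at time $s$, Fatou, conditioning on $\cF_s$, and exhibiting $W_s$ as a closed martingale. The central step, however, is not justified.

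\textbf{The gap.} The almost sure identity $W_\infty=W_s\int\nu_s^\omega(\dd x)\,W_\infty\circ\theta_{s,x}$ does not follow from letting $t\to\infty$. Passing the limit through the $x$-integral needs domination or uniform integrability of $(W_t\circ\theta_{s,x})_t$, which is exactly what you are trying to prove. Your workaround gets Fatou's direction backwards. Fatou in the $x$-integral gives only
\[
W_\infty\ \ge\ W_s\int\nu_s^\omega(\dd x)\,W_\infty\circ\theta_{s,x},
\qquad\text{hence}\qquad
\E[W_\infty\mid\cF_s]\ \ge\ c\,W_s,\quad c:=\E W_\infty .
\]
Conditional Fatou in the time variable gives instead $\E[W_\infty\mid\cF_s]\le\liminf_t\E[W_{s+t}\mid\cF_s]=W_s$, with the constant $1=\E W_t$ rather than $c$. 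The bound $\E[W_\infty\mid\cF_s]\le cW_s$ that you attribute to Fatou would require $\E[W_t\circ\theta_{s,x}]\to\E W_\infty$, which is uniform integrability again. Your ``reverse inequality'' also invokes the unproved almost sure identity. As written, the argument is circular.

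\textbf{The fix.} The missing idea, which is how the paper closes the loop, is to compare expectations. By the Fatou inequality above, $\E[W_\infty\mid\cF_s]-cW_s\ge0$. Its expectation is $c-c\,\E W_s=0$, so it vanishes almost surely. (In the paper's notation: both sides of $\int Z_s^\omega(\dd x)\,W_\infty^{s,x}\le W_\infty$ have mean $c$, so the Fatou inequality is an a.s.\ equality.) This gives $\E[W_\infty\mid\cF_s]=cW_s$ with no interchange of limits, and the rest of your argument then goes through.

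\textbf{Two smaller points.}
\begin{enumerate}
\item Taking expectations in $W_s=c^{-1}\E[W_\infty\mid\cF_s]$ only yields $1=1$. The conclusion $c=1$ comes from uniform integrability plus a.s.\ convergence, giving $\E W_\infty=\lim_s\E W_s=1$. Alternatively, as in the paper, L\'evy's upward theorem gives $W_s\to W_\infty/c$, and comparing with $W_s\to W_\infty>0$ forces $c=1$.
\item Your ``concrete'' sketch of the zero--one law in the easy direction tacitly uses the same interchange, since Fatou gives only one inclusion of events. The paper simply invokes the zero--one law there.
\end{enumerate}
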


\begin{proof}
Uniform integrability implies \(W_t\to W_\infty\) in \(L^1\), hence
\(\E W_\infty=1\). By the zero-one law for \(W_\infty\), this gives
\(W_\infty>0\) almost surely.

Conversely assume \(W_\infty>0\) almost surely, and set
\(c:=\E W_\infty\in(0,1]\). Let \((\cF_t)_{t\ge0}\) be the filtration
generated by the environment up to time \(t\). For \(s,t\ge0\) and
\(x\in\R^d\), define the partition function on \( (t,t+s]\)
\[
  W_s^{t,x}
  :=
  P_x\left[
    \exp\left\{
      \beta\int_t^{t+s}\omega(\dd r,B_{r-t})
      -\frac{\beta^2}{2}s
    \right\}
  \right],
\]
and let \(W_\infty^{t,x}:=\lim_{s\to\infty}W_s^{t,x}\). By independence of
disjoint time intervals and space--time stationarity,
\[
  W_\infty^{t,x}\ \text{is independent of }\cF_t,
  \qquad
  \E W_\infty^{t,x}=c .
\]

Write
\[
  Z_t^\omega(\dd x)
  :=
  P\left[
    \exp\left\{\beta H_t(B)-\frac{\beta^2}{2}t\right\}
    \1_{\{B_t\in\dd x\}}
  \right].
\]
Then \(Z_t^\omega(\R^d)=W_t\), and the Markov property gives
\[
  W_{t+s}
  =
  \int_{\R^d}Z_t^\omega(\dd x)\,W_s^{t,x}.
\]
Letting \(s\to\infty\) and using Fatou's lemma,
\[
  \int_{\R^d}Z_t^\omega(\dd x)\,W_\infty^{t,x}
  \le W_\infty
  \qquad\text{a.s.}
\]
Taking expectations gives equality, because
\[
  \E\left[
    \int_{\R^d}Z_t^\omega(\dd x)\,W_\infty^{t,x}
  \right]
  =
  c\,\E Z_t^\omega(\R^d)
  =
  c\,\E W_t
  =
  c
  =
  \E W_\infty .
\]
Thus the Fatou inequality is an equality almost surely. Taking conditional
expectation with respect to \(\cF_t\), we obtain
\[
  \E[W_\infty\mid\cF_t]
  =
  \int_{\R^d}Z_t^\omega(\dd x)\,\E W_\infty^{t,x}
  =
  cW_t .
\]
Hence
\[
  W_t=\frac1c\,\E[W_\infty\mid\cF_t].
\]
The right-hand side is uniformly integrable and converges almost surely to
\(W_\infty/c\), while \(W_t\to W_\infty\) almost surely. Since
\(W_\infty>0\) almost surely, we get \(c=1\). Therefore
\(W_t=\E[W_\infty\mid\cF_t]\), so \((W_t)_{t\ge0}\) is uniformly integrable
and \(W_t\to W_\infty\) in \(L^1(\Pp)\).
\end{proof}

For \(v\in\R^d\), let \(P^{(v)}\) denote Brownian motion with constant drift
\(v\), and define
\begin{equation}
  W_t^{(v)}
  :=
  P^{(v)}\left[
    \exp\left\{\beta H_t(B)-\frac{\beta^2}{2}t\right\}
  \right].
  \label{eq:drifted-partition}
\end{equation}
For fixed \(v\), this is again a positive mean-one martingale.  Introduce
the shift
\begin{equation*}
  (\tau_v\omega)(\dd s,x):=\omega(\dd s,x+vs).
\end{equation*}
The white-in-time covariance gives
\[
  \E[(\tau_v\omega)(\dd s,x)(\tau_v\omega)(\dd r,y)]
  =
  \delta_0(s-r)Q(x-y)\dd s\dd r.
\]
Thus \(\tau_v\omega\) has the same law as \(\omega\), and
\begin{equation}
  W_t^{(v)}(\omega)=W_t^{(0)}(\tau_v\omega).
  \label{eq:drift-shear-identity}
\end{equation}
We define \(W_\infty^{(v)}=W_\infty^{(0)}\circ\tau_v\).  It follows from
\eqref{eq:weak-L1} and \eqref{eq:drift-shear-identity} that
\begin{equation}
  \sup_{v\in\R^d}
  \E\big|W_t^{(v)}-W_\infty^{(v)}\big|
  =
  \E\big|W_t^{(0)}-W_\infty^{(0)}\big|
  \longrightarrow0.
  \label{eq:uniform-drift-L1-tail}
\end{equation}

We next record continuity in the drift.  For every fixed \(T<\infty\),
\begin{equation}
  \lim_{v\to0}\E\big|W_T^{(v)}-W_T^{(0)}\big|=0.
  \label{eq:finite-drift-continuity}
\end{equation}
Indeed, under \(P^{(v)}\) write the path as \(B_s+vs\). For a fixed Brownian
path, put
\[
  X_v:=H_T(B+v\,\cdot),
  \qquad X_0:=H_T(B).
\]
Then
\[
  \Var(X_v)=\Var(X_0)=T,
  \qquad
  \Cov(X_v,X_0)=c_T(v):=\int_0^TQ(vs)\dd s.
\]
By Cauchy–Schwarz inequality and Jensen’s inequality, consequently,
\begin{align*}
  \E|W_T^{(v)} - W_T^{(0)}|
  &\le \left( \E|W_T^{(v)} - W_T^{(0)}|^2 \right)^{1/2} \\
  &\le \left( \E\left[ \left( e^{\beta X_v - \beta^2 T/2} - e^{\beta X_0 - \beta^2 T/2} \right)^2 \right] \right)^{1/2} \\
  &= \left( 2e^{\beta^2 T} - 2e^{\beta^2 c_T(v)} \right)^{1/2} \longrightarrow 0,
\end{align*}
because \(Q\) is continuous at zero and \(Q(0)=1\).  This proves
\eqref{eq:finite-drift-continuity}.

Equations \eqref{eq:uniform-drift-L1-tail} and
\eqref{eq:finite-drift-continuity} imply continuity of the terminal values, i.e.,
\begin{equation}
  \lim_{v\to0}
  \E\big|W_\infty^{(v)}-W_\infty^{(0)}\big|=0.
  \label{eq:infinite-drift-continuity}
\end{equation}
Combining
\eqref{eq:uniform-drift-L1-tail} and
\eqref{eq:infinite-drift-continuity}, for every deterministic \(v_t\to0\), when \(t\to\infty\),
\begin{equation}
  \E\big|W_t^{(v_t)}-W_t^{(0)}\big|\longrightarrow0.
  \label{eq:vanishing-drift-comparison}
\end{equation}

\begin{proposition}
\label{prop:weak-endpoint-CLT}
Assume the weak disorder holds.  Recall the definition \eqref{terminal distribution} of the measure $\nu_t^\omega$. Then the random probability measures
\begin{equation}
  \widehat\nu_t^\omega(A)
  :=\nu_t^\omega(\sqrt t\,A), 
  \label{eq:scaled-endpoint-law}
\end{equation}
converge (\(\Pp\)) as $t\to\infty$, for the topology of weak convergence, to
the standard Gaussian law \(\gamma_d=N(0,I_d)\).
\end{proposition}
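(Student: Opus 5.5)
The plan is to follow the Comets--Yoshida characteristic-function argument, using the drifted partition functions \(W_t^{(v)}\) prepared above. Weak convergence in probability to \(\gamma_d\) follows once we show that, for each fixed \(\xi\in\R^d\),
\[
  \int_{\R^d}e^{i\xi\cdot x/\sqrt t}\,\nu_t^\omega(\dd x)\longrightarrow e^{-|\xi|^2/2}
  \quad\text{in probability.}
\]
Indeed, a diagonal argument over a countable dense set of \(\xi\) gives a.s.\ convergence along subsequences, and tightness comes from the real-exponential version below. For this it is convenient to work instead with real exponential moments. Set \(v=v_t:=\xi/\sqrt t\), or \(i\xi/\sqrt t\) for the Fourier version, handled by analytic continuation of the identity below.

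The key identity is the Girsanov/Cameron--Martin formula. Since \(\dd P^{(v)}/\dd P=\exp\{v\cdot B_t-|v|^2t/2\}\) on \(\mathcal F_t^B\),
\[
  \mu_{t,\omega}^{\beta}\big[e^{v\cdot B_t}\big]
  =e^{|v|^2t/2}\,\frac{W_t^{(v)}}{W_t}.
\]
With \(v_t=\xi/\sqrt t\) this reads \(\widehat\nu_t^\omega[e^{\xi\cdot x}]=e^{|\xi|^2/2}W_t^{(v_t)}/W_t\). By \eqref{eq:vanishing-drift-comparison}, \(W_t^{(v_t)}-W_t\to0\) in \(L^1(\Pp)\). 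Since \(W_t\to W_\infty>0\) a.s., the ratio tends to \(1\) in probability. Hence the Laplace transform of \(\widehat\nu_t^\omega\) converges in probability to that of \(\gamma_d\), for every \(\xi\in\R^d\).

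To pass from Laplace transforms to weak convergence, we first use the bound for \(\pm\xi\) along coordinate directions to get tightness in probability: \(\widehat\nu_t^\omega(|x|>R)\le e^{-R}\sum_{\pm,j}\widehat\nu_t^\omega[e^{\pm x_j}]\), which is \(O_\Pp(1)e^{-R}\). We then argue along subsequences. Fix a countable dense set \(D\subset\R^d\) and extract a further subsequence along which the convergence holds a.s.\ for all \(\xi\in D\) simultaneously. On that event the measures are tight, and any weak limit \(\nu\) satisfies \(\nu[e^{\xi\cdot x}]=e^{|\xi|^2/2}\) for \(\xi\in D\), by uniform integrability from the exponential bounds at \(2\xi\). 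Convexity and continuity extend this to all \(\xi\), and a finite Laplace transform in a neighbourhood of \(0\) determines the law, so \(\nu=\gamma_d\). Since every subsequence has a further subsequence converging a.s.\ to \(\gamma_d\), we get convergence in probability in the weak topology. This route avoids complex drifts entirely.

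The main obstacle is \eqref{eq:vanishing-drift-comparison} itself, which is the only place weak disorder enters. It rests on the uniform \(L^1\) convergence from Lemma~\ref{lem:weak-UI} together with the shear invariance \eqref{eq:drift-shear-identity}. We must check that the triangle inequality
\[
  \E\big|W_t^{(v_t)}-W_t\big|\le \E\big|W_t^{(v_t)}-W_\infty^{(v_t)}\big|+\E\big|W_\infty^{(v_t)}-W_\infty\big|+\E\big|W_\infty-W_t\big|
\]
is legitimate with \(t\)-dependent drift. The first and last terms are controlled uniformly in \(v\) by \eqref{eq:uniform-drift-L1-tail}. The middle term is controlled by \eqref{eq:infinite-drift-continuity}, which requires \(v_t\to0\) and so holds here. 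The remaining step is routine bookkeeping: converting the \(L^1\) statement into the ratio statement, which uses \(W_t\ge W_\infty/2>0\) eventually, a.s.
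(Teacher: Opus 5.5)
Your proof is correct and follows the paper's route. You use the Cameron--Martin identity $\widehat\nu_t^\omega[e^{\xi\cdot x}]=e^{|\xi|^2/2}W_t^{(v_t)}/W_t$ and the vanishing-drift comparison to get convergence of the quenched Laplace transforms in probability; you then pass to weak convergence via tightness from $\pm e_j$, a diagonal subsequence over a countable dense set, uniform integrability from the bound at $2\xi$, and identification of the Gaussian limit. The opening detour through characteristic functions and complex drifts $i\xi/\sqrt t$ is unnecessary (the shear identity has no meaning for complex $v$), but you explicitly discard it, so nothing is lost.
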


\begin{proof}
Let \(\lambda\in\R^d\), and put \(v_t=\lambda/\sqrt t\). Using \eqref{eq:polymer-measure} and
\eqref{eq:drifted-partition}, by the Brownian Cameron--Martin formula,
\[
\int_{\R^d} e^{\lambda\cdot x}\widehat\nu_t^\omega(\dd x)
=
\frac{
  P\left[
    \exp\left\{
      \beta H_t(B)-\frac{\beta^2}{2}t
    \right\}
    \exp\left\{\lambda\cdot \frac{B_t}{\sqrt t}\right\}
  \right]}
  {W_t^{(0)}}
=
e^{|\lambda|^2/2}\,
\frac{W_t^{(v_t)}}{W_t^{(0)}} .
\]
By \eqref{eq:vanishing-drift-comparison},
\[
  W_t^{(v_t)}-W_t^{(0)}
  \longrightarrow 0
  \qquad\text{in }L^1(\Pp),
\]
hence also in \(\Pp\)-probability.  Since weak disorder gives
\[
  W_t^{(0)}\longrightarrow W_\infty^{(0)}>0
  \qquad\text{in }\Pp\text{-probability},
\]
we have
\[
  \frac{W_t^{(v_t)}}{W_t^{(0)}}
  =
  1+\frac{W_t^{(v_t)}-W_t^{(0)}}{W_t^{(0)}}
  \longrightarrow 1
  \qquad\text{in }\Pp\text{-probability}.
\]
Indeed, for any \(\varepsilon,\eta>0\),
\[
  \Pp\left(
    \left|
      \frac{W_t^{(v_t)}-W_t^{(0)}}{W_t^{(0)}}
    \right|>\varepsilon
  \right)
  \le
  \Pp(W_t^{(0)}\le\eta)
  +
  \Pp(|W_t^{(v_t)}-W_t^{(0)}|>\varepsilon\eta),
\]
then the right hand to \(0\) as \(\eta\downarrow0\) and \(t\to\infty\) .
Therefore
\begin{equation}
  \int_{\R^d} e^{\lambda\cdot x}\widehat\nu_t^\omega(\dd x)
  \longrightarrow
  e^{|\lambda|^2/2}
  \qquad\text{in }\Pp\text{-probability as }t\to\infty.
  \label{eq:quenched-Laplace-limit}
\end{equation}

It remains only to pass from the Laplace transforms to weak convergence.  We use
a standard subsequence argument.  Take
\(\mathcal D=\mathbb Q^d\), which is dense in \(\mathbb R^d\). From \eqref{eq:quenched-Laplace-limit}, every sequence
\(t_n\to\infty\) contains a subsequence, still denoted \(t_n\), such that
for every \(\lambda\in\mathcal D\),
\[
  \int_{\R^d} e^{\lambda\cdot x}\widehat\nu_{t_n}^\omega(\dd x)
  \longrightarrow e^{|\lambda|^2/2}
  \qquad\text{almost surely}.
\]
Fix an environment \(\omega\) in this almost sure event and write \(\nu_n:=\widehat\nu_{t_n}^\omega\). For every coordinate vector \(e_j\), there exists \(n \ge n_0(\omega)\) such that the convergence above with \(\lambda=\pm e_j\) gives
\[
  \sup_{n\ge n_0(\omega)}
  \int_{\mathbb R^d}e^{\pm x_j}\nu_n(\dd x)
  \le C(\omega).
\]
Consequently, for every \(R>0\),
\[
  \nu_{n}(|x_j|>R)
  \le
  e^{-R}\int e^{x_j}\nu_{n}(\dd x)
  +
  e^{-R}\int e^{-x_j}\nu_{n}(\dd x),
\]
and the right-hand side is bounded by \(C(\omega)e^{-R}\) for all large
\(n\), hence \((\nu_n)\) is tight. Let \(\nu_{n_k}\) be any further weakly convergent subsequence. Fix \(\lambda\in \mathcal D\). Since \(2\lambda\in \mathcal D\), \eqref{eq:quenched-Laplace-limit} implies 
\[
 \sup_k \int e^{2\lambda \cdot x} \nu_{n_k}(\dd x) \le C_{\lambda}(\omega)<\infty.
\]
Moreover, for every \(M>0\), the function \(e^{2\lambda \cdot x}\wedge M\) is bounded and continuous. Therefore, by weak convergence,
\[
\int (e^{2\lambda \cdot x}\wedge M) \rho(\dd x)=\lim_{k}\int (e^{2\lambda \cdot x}\wedge M)\nu_{n_k}(\dd x)\le C_{\lambda}(\omega).
\]
Letting \(M\rightarrow\infty\) and using monotone convergence theorem gives
\[
\int e^{2\lambda \cdot x}\rho(\dd x)\le C_{\lambda}(\omega).
\]
Thus \(e^{\lambda \cdot x}\) is uniformly integrable w.r.t \((\nu_{n_k})_k\) and \(\rho\). Indeed, for \(R>0\), 
\[
\sup_k\int_{\{e^{\lambda\cdot x}>R\}} e^{\lambda \cdot x}\nu_{n_k}(\dd x)\le \frac{C_{\lambda}(\omega)}{R}.
\]
and the same bound holds with \(\rho\) in place of \(\nu_{n_k}\).

Let \(f_R(x):=e^{\lambda \cdot x}\wedge R\). Since \(f_R\) is bounded and continuous, we have \(\int f_R \dd \nu_{n_k}\rightarrow \int f_R \dd\rho \). The uniform-integrability allows us to let \(R\rightarrow\infty\), and yields
\[
  \int_{\mathbb R^d}e^{\lambda\cdot x}\rho(\dd x)
  =
  \lim_{k\to\infty}
  \int_{\mathbb R^d}e^{\lambda\cdot x}\nu_{n_k}(\dd x)
  =
  e^{|\lambda|^2/2},
  \qquad \lambda\in\mathcal D.
\]
By continuity in \(\lambda\), this identity holds for all \(\lambda\in\R^d\).
The Gaussian law is determined by its Laplace transform, so
\(\rho=\gamma_d\).  Therefore
\begin{equation}
  \widehat\nu_{t_n}^\omega \Rightarrow \gamma_d
  \qquad\text{almost surely}.
  \label{susequent_weak_converg}
\end{equation}
Since every sequence \(t_n\to\infty\) has a subsequence along which this
almost sure convergence holds, the full family converges to \(\gamma_d\) in
\(\Pp\)-probability for the topology of weak convergence.
\end{proof}

\subsection{Proof of \(\beta_c=0\) in regime \(d\geq3,\kappa\leq2\)}

\begin{proposition}
\label{prop:strong-kappa-le-two}
Assume \eqref{eq:product-tail} and \(\kappa\le2\).  Then
\[
  W_\infty(\beta)=0
  \qquad\text{almost surely for every }\beta>0.
\]
\end{proposition}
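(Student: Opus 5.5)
We argue by contradiction, combining the overlap criterion (Lemma~\ref{prop:overlap-criterion}) with the endpoint central limit theorem (Proposition~\ref{prop:weak-endpoint-CLT}). Fix \(\beta>0\) and suppose weak disorder holds, so \(W_\infty(\beta)>0\) almost surely. By Lemma~\ref{prop:overlap-criterion}, \(\int_0^\infty I_s\dd s<\infty\) almost surely. We will show that the endpoint law is too spread out to force the overlap to be small in this way. More precisely, diffusivity forces \(I_t\) to be of order at least \(t^{-\kappa/2}\) with positive probability, and \(\int^\infty t^{-\kappa/2}\dd t=\infty\) when \(\kappa\le2\).

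\textbf{Lower bound on the overlap.} Since \(Q\ge c_Q G_{\bm\alpha}\ge0\), for any \(M>0\),
\[
I_t\ge c_Q\iint_{\{|x|_\infty,|y|_\infty\le M\sqrt t\}}G_{\bm\alpha}(x-y)\,\nu_t^\omega(\dd x)\nu_t^\omega(\dd y)
\ge c_Q\,(1+2M\sqrt t)^{-\kappa}\,\nu_t^\omega\big([-M\sqrt t,M\sqrt t]^d\big)^2 .
\]
By Proposition~\ref{prop:weak-endpoint-CLT}, \(\nu_t^\omega([-M\sqrt t,M\sqrt t]^d)=\widehat\nu_t^\omega([-M,M]^d)\) converges in probability to \(\gamma_d([-M,M]^d)\), which exceeds \(1/2\) for \(M\) large. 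Hence there is \(t_0\) such that, for every \(t\ge t_0\),
\[
\Pp\big(I_t\ge c\,t^{-\kappa/2}\big)\ge \tfrac12 .
\]

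\textbf{From pointwise probability to divergence of the integral.} Set \(J_t:=t^{\kappa/2}I_t\wedge c\). This quantity is bounded, and \(\E J_t\ge c/2\) for \(t\ge t_0\). Therefore
\[
\E\int_{t_0}^T t^{-\kappa/2}J_t\dd t\ge \tfrac c2\int_{t_0}^T t^{-\kappa/2}\dd t\to\infty ,
\]
because \(\kappa\le2\). Expectation divergence alone does not give almost sure divergence, and this is the main obstacle. We handle it with a Paley--Zygmund/Fatou-type argument over dyadic blocks. Let \(S_n:=\int_{2^n}^{2^{n+1}}t^{-\kappa/2}J_t\dd t\). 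Then \(0\le S_n\le c\,2^{n(1-\kappa/2)}\) and \(\E S_n\ge (c/4)2^{n(1-\kappa/2)}\). It follows that \(\Pp(S_n\ge (c/8)2^{n(1-\kappa/2)})\ge\delta>0\) uniformly in \(n\). For \(\kappa\le 2\), each such event contributes at least a constant to \(\int_0^\infty I_s\dd s\). By reverse Fatou, \(\Pp(S_n\ge c'\ \text{infinitely often})\ge\delta\). On this event, \(\int_0^\infty I_s\dd s=\infty\).

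\textbf{Conclusion.} The event \(\{\int_0^\infty I_s\dd s=\infty\}\) therefore has probability at least \(\delta>0\). This contradicts the almost sure finiteness obtained from weak disorder via Lemma~\ref{prop:overlap-criterion}. Hence \(W_\infty(\beta)=0\) almost surely, and by the zero-one law this holds for every \(\beta>0\).

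In the case \(\kappa<2\) one could instead deduce this directly from \(p(\beta)<0\). The overlap route, however, handles \(\kappa=2\) uniformly with the other cases, and it is the only route available when \(\kappa=2\), since there \(p(\beta)=0\) for small \(\beta\). The same argument applies verbatim to the radial case \(\vartheta=2\), \(d\ge3\), using the lower bound \(Q(x)\ge c(1+|x|)^{-2}\).
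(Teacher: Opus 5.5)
Your proof is correct and takes essentially the paper's route. Weak disorder and the overlap criterion give $\int_0^\infty I_s\,\dd s<\infty$ almost surely, while the endpoint CLT forces $I_t\ge c\,t^{-\kappa/2}\ge c\,t^{-1}$ with probability bounded below, so each dyadic block $[T,2T]$ carries overlap mass of order one. The differences are cosmetic: you use a single centered box $[-M\sqrt t,M\sqrt t]^d$ where the paper uses the separated pair $\sqrt t A_\pm$, and you finish with Paley--Zygmund and reverse Fatou where the paper bounds $X_T=\int_T^{2T}\1_{\{sI_s<c_0\}}\,\dd s/s$ in expectation and contradicts $\int_T^{2T}I_s\,\dd s\to0$ almost surely.
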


\begin{proof}
Suppose, to the contrary, that weak disorder holds at some \(\beta>0\).
Let
\[
  A_+:=[1,2]^d,
  \qquad
  A_-:=[-2,-1]^d,
  \qquad
  a:=\gamma_d(A_+)=\gamma_d(A_-)>0.
\]
Both boxes are continuity sets for \(\gamma_d\).  By
Proposition~\ref{prop:weak-endpoint-CLT},
\begin{equation}
  \Pp\left(
    \widehat\nu_t^\omega(A_+)\ge\frac a2,\quad
    \widehat\nu_t^\omega(A_-)\ge\frac a2
  \right)
  \longrightarrow1
  \qquad\text{as }t\to\infty.
  \label{eq:two-box-mass}
\end{equation}
If \(x\in\sqrt t\,A_+\), \(y\in\sqrt t\,A_-\), and \(t\ge1\), then
\(2\sqrt t\le x_j-y_j\le4\sqrt t\) for every \(j\).  Therefore
\begin{equation}
  Q(x-y)
  \ge
  c_Q\prod_{j=1}^d(1+4\sqrt t)^{-\alpha_j}
  \ge c\,t^{-\kappa/2}.
  \label{eq:two-box-Q-lower}
\end{equation}
Restricting the double integral in \eqref{eq:instantaneous-overlap} to
\(\sqrt t\,A_+\times\sqrt t\,A_-\), equations
\eqref{eq:two-box-mass} and \eqref{eq:two-box-Q-lower} imply
\begin{equation}
  \mathbb P\left(I_t\ge c_0t^{-\kappa/2}\right)\longrightarrow1
  \qquad\text{for some }c_0>0  \qquad\text{as }t\to\infty.
  \label{eq:overlap-lower-probability}
\end{equation}

For \(\kappa\le2\), we have \(c_0t^{-\kappa/2}\ge c_0t^{-1}\) for \(t\ge1\). Define
\[
X_T:=\int_T^{2T}\mathbf{1}_{\{sI_s<c_0\}}\frac{ds}{s}.
\]
By Fubini's theorem and \(s=Tu\),
\[
\mathbb E X_T=\int_1^2\mathbb P(Tu I_{Tu}<c_0)\frac{du}{u}\to0,
\quad\text{hence }X_T\xrightarrow{\mathbb P}0.
\]
by \eqref{eq:overlap-lower-probability} and dominated convergence. We notice that:
\[
\int_T^{2T}I_s\,ds
\ge
\int_{T}^{2T} \mathbf{1}_{\{sI_s\ge c_0\}}\frac{c_0}{s}\,ds
=
c_0\left(\int_T^{2T}\frac{ds}{s}-\int_T^{2T}\mathbf{1}_{\{sI_s<c_0\}}\frac{ds}{s}\right)
=
c_0(\log2-X_T).
\]
Since \(X_T\to0\) in probability,
\begin{equation}
  \mathbb P\left(\int_T^{2T}I_s\,ds\ge \frac{c_0\log2}{2}\right)\to1
  \qquad\text{as }T\to\infty.
  \label{eq:dyadic-overlap-positive}
\end{equation}
But weak disorder and \eqref{prop:overlap-criterion} give \(\int_0^\infty I_s\,ds<\infty\) a.s., hence \(\int_T^{2T}I_s\,ds\to0\) a.s., contradicting \eqref{eq:dyadic-overlap-positive}. Thus weak disorder is impossible.
\end{proof}

\begin{remark}
    Note that from \eqref{eq:instantaneous-overlap} -- \eqref{susequent_weak_converg} in this section, no specific shape of the covariance is assumed; hence the same arguments remain valid when the covariance is taken to be the radial potential \eqref{eq:radial-tail-class}. Therefore, it suffices to verify the steps from \eqref{eq:two-box-mass} -- \eqref{eq:overlap-lower-probability} in order to prove \(\beta_c=0\) in Theorem~\ref{thm:radial-regimes}(i). This is straightforward, and we omit the details.
\end{remark}
\section{Two-dimensional integrable spatial correlations}
\label{sec:two-dimensional-integrable}

In this section we complete the proof of Theorem~\ref{thm:radial-regimes}(ii). Throughout the section, \(d=2,\vartheta>2\), which means \(Q\in L^1(\R^2)\), and \(p_t\) denotes the Brownian heat kernel
\begin{equation}
  p_t(x):=\frac{1}{2\pi t}\exp\left\{-\frac{|x|^2}{2t}\right\}.
  \label{eq:two-d-heat-kernel}
\end{equation}
We use the multilinear change-of-measure strategy of Berger and Lacoin\cite{BergerLacoin2017}, implemented here through an anchored continuous heat-kernel chain in a spatially correlated Gaussian environment.

\subsection{Some estimates used in Section~\ref{proof of Lemma 5.3}}
We shall use the following elementary facts, which follow from \(Q\in L^1(\R^2)\cap L^\infty(\R^2)\). Since \(0\le Q\le1\),
\begin{equation}
  \sup_z(p_t*Q)(z)
  \le \min\left\{\|Q\|_\infty,\|p_t\|_\infty\|Q\|_1\right\}
  \le \min\left\{1,\frac{\|Q\|_1}{2\pi t}\right\}
  \le \frac{C_Q}{1+t}.
  \label{eq:uniform-pQ}
\end{equation}

\begin{lemma}
\label{lem:gaussian-bridge-insertion}
There is \(C<\infty\) such that, for \(a,b>0\) and
\(x,y,w\in\R^2\),
\begin{equation}
  \int_{\R^2}
  p_a(x-z)Q(z-w)p_b(z-y)\dd z
  \le
  \frac{C}{1+\min(a,b)}p_{a+b}(x-y).
  \label{eq:bridge-insertion}
\end{equation}
Consequently, for \(k\ge 0\), \(L>0\), and
\(x,y,w_1,\ldots,w_k\in\R^2\), define
\[
  \Delta_k(L):=\{0<s_1<\cdots<s_k<L\},
\]
with the convention \(s_0=0\), \(s_{k+1}=L\), \(z_0=x\), and
\(z_{k+1}=y\).  Then
\begin{align}
  &\int_{\Delta_k(L)}
    \int_{(\R^2)^k}
    \prod_{i=0}^{k}
      p_{s_{i+1}-s_i}(z_i-z_{i+1})
    \prod_{i=1}^{k}
      Q(z_i-w_i)
    \dd z_1\cdots \dd z_k\,
    \dd s_1\cdots \dd s_k
  \notag\\
  &\hspace{3cm}
  \le
  \big[C\log(2+L)\big]^k p_L(x-y).
  \label{eq:ordered-insertion-cost}
\end{align}
\end{lemma}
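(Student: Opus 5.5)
For \eqref{eq:bridge-insertion}, I would use the standard Gaussian bridge decomposition
\[
  p_a(x-z)\,p_b(z-y)=p_{a+b}(x-y)\,q_{a,b}(z),
\]
where \(q_{a,b}\) is the Gaussian density with mean \(m=\frac{b}{a+b}x+\frac{a}{a+b}y\) and covariance \(\tau I_2\), \(\tau:=\frac{ab}{a+b}\). The left-hand side then equals \(p_{a+b}(x-y)\int q_{a,b}(z)Q(z-w)\dd z\). This last integral is \((p_\tau*Q)(m-w)\), since the bridge density is just a heat kernel of time \(\tau\) recentred at \(m\). By \eqref{eq:uniform-pQ} it is at most \(C_Q/(1+\tau)\). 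Finally, \(\tau\ge\frac12\min(a,b)\), because \(\frac{ab}{a+b}\ge\frac{\min(a,b)\max(a,b)}{2\max(a,b)}\). This gives \(C/(1+\min(a,b))\) and proves \eqref{eq:bridge-insertion}.

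For \eqref{eq:ordered-insertion-cost}, I would integrate out \(z_1,\dots,z_k\) one at a time, but I would need to keep track of the times so that a \(\log\) appears rather than a factor \(L\). Apply \eqref{eq:bridge-insertion} first at \(z_k\), with \(a=s_k-s_{k-1}\) and \(b=L-s_k\). This merges the two kernels adjacent to \(z_k\) into \(p_{L-s_{k-1}}(z_{k-1}-y)\), at cost \(C/(1+\min(s_k-s_{k-1},L-s_k))\). Iterating downward, at step \(i\) the relevant kernels are \(p_{s_i-s_{i-1}}(z_{i-1}-z_i)\) and \(p_{L-s_i}(z_i-y)\), and the cost is
\[
  \frac{C}{1+\min(s_i-s_{i-1},\,L-s_i)}\le C\Big(\frac{1}{1+s_i-s_{i-1}}+\frac{1}{1+L-s_i}\Big).
\]
After all \(k\) insertions the spatial integral is bounded by
\[
  p_L(x-y)\,C^k\prod_{i=1}^k\Big(\frac{1}{1+u_i}+\frac{1}{1+L-s_i}\Big),
  \qquad u_i:=s_i-s_{i-1}.
\]

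The main obstacle is the time integral over \(\Delta_k(L)\). Crudely bounding each factor by \(1\) and integrating gives \(L^k/k!\), which is too large. Bounding only with the \(1/(1+u_i)\) terms, after extending the domain to \(u_i\in(0,L)\) independently, would give \(\prod_i\int_0^L\frac{\dd u_i}{1+u_i}=\log(1+L)^k\). The mixed terms need care. I would expand the product into \(2^k\) terms indexed by subsets \(S\subset\{1,\dots,k\}\), where for \(i\in S\) one uses \(1/(1+L-s_i)\) and for \(i\notin S\) one uses \(1/(1+u_i)\). To handle a fixed \(S\), change variables to \(u_i\) for \(i\notin S\) and to \(v_i:=L-s_i\) for \(i\in S\). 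I expect this map to be injective on the ordered simplex: the \(s_i\) are recovered sequentially, since for each \(i\) either \(s_i=s_{i-1}+u_i\) or \(s_i=L-v_i\). Each new variable ranges over a subset of \((0,L)\), so the integral is at most \(\prod_{i=1}^k\int_0^L\frac{\dd r}{1+r}=\log(1+L)^k\). Summing over the \(2^k\) subsets gives \((2C\log(2+L))^k\), which is \eqref{eq:ordered-insertion-cost} after renaming \(C\). If this injectivity argument fails, I would fall back to induction on \(k\), splitting each \(u_i\) at \(L/2\).
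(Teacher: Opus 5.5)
Your proposal is correct. The proof of \eqref{eq:bridge-insertion} is the paper's: bridge identity, then $\tau\ge\frac12\min(a,b)$, then \eqref{eq:uniform-pQ}. For \eqref{eq:ordered-insertion-cost}, however, you take a different route.

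The paper removes one vertex at a time and integrates its space and time variables together. With all other variables fixed, it eliminates $z_i$ by \eqref{eq:bridge-insertion} and then integrates $s_i$ over $(s_{i-1},s_{i+1})$. This is legitimate by Tonelli because the merged kernel $p_{s_{i+1}-s_{i-1}}(z_{i-1}-z_{i+1})$ no longer depends on $s_i$. The local bound $\int_{s_{i-1}}^{s_{i+1}}\frac{\dd s_i}{1+\min(s_i-s_{i-1},s_{i+1}-s_i)}\le C\log(2+L)$ gives the factor for each vertex, and induction on $k$ finishes.

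You instead perform all $k$ spatial contractions first, peeling toward $y$, which gives a bound pointwise in the times. You then handle the purely deterministic integral $\int_{\Delta_k(L)}\prod_i\bigl(1+\min(s_i-s_{i-1},L-s_i)\bigr)^{-1}\dd s$ by a $2^k$-term expansion and a change of variables. That change of variables works: for fixed $S$, the map sending $s$ to $(w_i)$ with $w_i=s_i-s_{i-1}$ or $w_i=L-s_i$ is affine and lower triangular with diagonal entries $\pm1$. It is therefore a bijection of $\R^k$ with Jacobian of modulus one, and it maps $\Delta_k(L)$ into $(0,L)^k$. You should state the unit Jacobian explicitly, since injectivity alone does not control the integral.

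Your route costs an extra $2^k$, which is absorbed into $C$, and it isolates a clean time-simplex inequality. The paper's route is shorter and avoids the combinatorics. In fact, even with your right-to-left peeling you could integrate $s_k$ over $(s_{k-1},L)$ immediately after eliminating $z_k$, because $p_{L-s_{k-1}}(z_{k-1}-y)$ does not depend on $s_k$. This recovers the paper's induction and makes both the subset expansion and your fallback argument unnecessary.
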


\begin{proof}
The Brownian bridge identity gives
\begin{equation}
  p_a(x-z)p_b(z-y)
  =
  p_{a+b}(x-y)p_\tau(z-\mu),
  \qquad
  \tau=\frac{ab}{a+b},\quad
  \mu=\frac{bx+ay}{a+b}.
  \label{eq:gaussian-bridge-identity}
\end{equation}
Since
\(
\tau\ge\frac12\min(a,b)
\),
Equation~\eqref{eq:uniform-pQ} implies
\[
  (p_\tau*Q)(\mu-w)
  \le\frac{C}{1+\tau}
  \le\frac{C'}{1+\min(a,b)}.
\]
Multiplication by \(p_{a+b}(x-y)\) proves
\eqref{eq:bridge-insertion}.

Indeed, remove the inserted vertices one at a time for \eqref{eq:ordered-insertion-cost}.  If \(z_i\) lies between
two heat kernels of lengths \(a=s_i-s_{i-1}\) and \(b=s_{i+1}-s_i\), then
\eqref{eq:bridge-insertion} replaces the three factors involving \(z_i\) by
\[
  \frac{C}{1+\min(a,b)}p_{a+b}(z_{i-1}-z_{i+1}).
\]
Integrating the corresponding time variable over
\((s_{i-1},s_{i+1})\) gives
\[
  \int_{s_{i-1}}^{s_{i+1}}
  \frac{\dd s_i}
       {1+\min(s_i-s_{i-1},s_{i+1}-s_i)}
  \le C\log(2+s_{i+1}-s_{i-1})
  \le C\log(2+L).
\]
Repeating this \(k\) times leaves only \(p_L(x-y)\), proves
\eqref{eq:ordered-insertion-cost}.
\end{proof}

\subsection{An abstract fractional-moment reduction}
\label{subsec:abstract-fractional-reduction}

We first record an abstract fractional-moment reduction in the notation of
Section~\ref{subsec:coarse-graining}.  Recall the boxes \(I_y^T\) and the
coarse-path partition functions \(W_{mT}(Y)\) in
\eqref{eq:terminal-box} and \eqref{eq:coarse-path-partition}.

Fix \(T>1\).  Let \(g\) be a measurable functional of the environment
restricted to the time interval \([0,T]\), satisfying
\begin{equation*}
  0<g\le1,
  \qquad
  C_g:=\E[g^{-1}]<\infty.
\end{equation*}

For a fixed path segment \(B=(B_s)_{0\le s\le T}\), define the planted
environment measure \(\Pp_B^\beta\) by
\begin{equation}
  \frac{\dd\Pp_B^\beta}{\dd\Pp}
  :=
  \exp\left\{
    \beta H_T(B)-\frac{\beta^2}{2}T
  \right\},
  \qquad
  \E_B^\beta[\cdot]:=\E_{\Pp_B^\beta}[\cdot].
  \label{eq:abstract-planted-measure}
\end{equation}
The right-hand side of \eqref{eq:abstract-planted-measure} has expectation
one because \(H_T(B)\) is centered Gaussian with variance \(T\).  Set
\[
  \Phi_g(B):=\E_B^\beta[g]
\]
and, for \(z\in\Z^2\),
\begin{equation}
  a_{T,g}(z)
  :=
  \sup_{x\in I_0^T}
  P_x\left[
    \1_{\{B_T\in I_z^T\}}\Phi_g(B)
  \right].
  \label{eq:abstract-one-block-coefficient}
\end{equation}
Finally, define
\begin{equation*}
  \rho_{T,g}
  :=
  C_g^{1/2}
  \sum_{z\in\Z^2}a_{T,g}(z)^{1/2}.
\end{equation*}

For \(m\in\N\) and
\[
  Y=(y_1,\ldots,y_m)\in(\Z^2)^m,
  \qquad y_0:=0,
\]
put
\[
  z_k:=y_k-y_{k-1},
  \qquad 1\le k\le m.
\]
For \(\varphi\in C_c^\infty((0,T)\times\R^2)\), define the environment
in the \(k\)-th block, expressed in coordinates, by
\[
  \omega_{k,Y}(\varphi)
  :=
  \omega(\varphi_{k,Y}),\qquad
  \varphi_{k,Y}(s,x)
  :=
  \varphi\left(
    s-(k-1)T,\,
    x-\sqrt T\,y_{k-1}
  \right).
\]
The corresponding translated penalty and global penalty are
\[
  g_{k,Y}:=g(\omega_{k,Y}),
  \qquad
  G_Y:=\prod_{k=1}^m g_{k,Y}.
\]
For the Brownian path, define the \(k\)-th segment \(B^{k,Y}\) to be
the translation of the original segment \(B_{[(k-1)T,kT]}\) by
\(-\sqrt T\,y_{k-1}\):
\[
  B_s^{k,Y}
  :=
  B_{(k-1)T+s}-\sqrt T\,y_{k-1},
  \qquad 0\le s\le T.
\]

\begin{lemma}
\label{lem:integrable-fractional-reduction}
If
\begin{equation}
  \rho_{T,g}<1,
  \label{eq:abstract-contraction-condition}
\end{equation}
then, for every \(m\ge1\),
\begin{equation}
  \E\left[W_{mT}(\beta)^{1/2}\right]
  \le \rho_{T,g}^{\,m}.
  \label{eq:abstract-fractional-decay}
\end{equation}
Consequently,
\begin{equation}
  p(\beta)
  \le
  \frac{2\log\rho_{T,g}}{T}
  <0.
  \label{eq:abstract-free-energy-upper}
\end{equation}
\end{lemma}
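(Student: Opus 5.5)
We combine the coarse-path decomposition \eqref{eq:coarse-path-fractional} with \(\theta=1/2\) and a Cauchy--Schwarz change of measure. The penalty \(G_Y\) plays the role that the exponential tilt \eqref{eq:global-hard-change} played before. For each coarse path \(Y\) we write
\[
  \E\big[W_{mT}(Y)^{1/2}\big]
  =\E\big[G_Y^{-1/2}\,(G_YW_{mT}(Y))^{1/2}\big]
  \le \E\big[G_Y^{-1}\big]^{1/2}\,\E\big[G_YW_{mT}(Y)\big]^{1/2}.
\]
Each \(g_{k,Y}\) is measurable with respect to the environment in the time slab \(((k-1)T,kT]\), and these slabs are independent. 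By space--time stationarity, every \(g_{k,Y}\) has the law of \(g\). Hence \(\E[G_Y^{-1}]=C_g^m\).

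The main step is to show that
\begin{equation*}
  \E\big[G_YW_{mT}(Y)\big]\le\prod_{k=1}^m a_{T,g}(z_k).
\end{equation*}
By Fubini,
\[
  \E[G_YW_{mT}(Y)]=P\Big[\prod_k\1_{\{B_{kT}\in I^T_{y_k}\}}\;\E\big[G_Y e^{\beta H_{mT}(B)-\beta^2mT/2}\big]\Big].
\]
Fix a path \(B\). The Hamiltonian splits into a sum of block Hamiltonians \(H^{(k)}\), where \(H^{(k)}\) is the integral of \(\omega(\dd s,B_s)\) over the \(k\)-th slab. The pair \((g_{k,Y},H^{(k)})\) is measurable with respect to the \(k\)-th slab, so the environment expectation factorizes over blocks. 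By the translation-invariance identity \(\omega_{k,Y}\overset{d}{=}\omega\), the \(k\)-th factor equals \(\E^\beta_{B^{k,Y}}[g]=\Phi_g(B^{k,Y})\). We are therefore left with
\[
  P\Big[\prod_k \1_{\{B_{kT}\in I^T_{y_k}\}}\Phi_g(B^{k,Y})\Big].
\]
We peel off the blocks from the last one backwards using the Markov property at times \((m-1)T,\dots,T\). At the start of block \(k\) we have \(B^{k,Y}_0\in I_0^T\), and the endpoint condition reads \(B^{k,Y}_T\in I^T_{z_k}\). Bounding each conditional factor by the supremum over starting points in \(I_0^T\) gives exactly \(a_{T,g}(z_k)\).

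Combining the two bounds,
\[
  \E[W_{mT}(Y)^{1/2}]\le\prod_k C_g^{1/2}a_{T,g}(z_k)^{1/2}.
\]
Summing over \(Y\) via the bijection \(Y\leftrightarrow Z\), as in \eqref{bijection transformation}, yields \(\rho_{T,g}^m\). This proves \eqref{eq:abstract-fractional-decay}.

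For \eqref{eq:abstract-free-energy-upper}, Jensen's inequality gives
\[
  \E\log W_{mT}=2\,\E\log W_{mT}^{1/2}\le 2\log\E\big[W_{mT}^{1/2}\big]\le 2m\log\rho_{T,g}.
\]
Dividing by \(mT\) and letting \(m\to\infty\), using that the limit in \eqref{eq:free-energy} exists, gives the claimed bound. It is strictly negative because \(\rho_{T,g}<1\).

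The only delicate point is the factorization step. One has to check rigorously, in the isonormal realization, that the joint law of \((g_{k,Y},H^{(k)}(B))\) coincides with that of \((g,H_T(B^{k,Y}))\). This uses the time shift together with the spatial shift by \(\sqrt T y_{k-1}\), both of which preserve the covariance. One also has to check that the \(\Pp\)-expectation with the exponential weight is exactly the planted measure \eqref{eq:abstract-planted-measure}. Measurability of \(\Phi_g(B)\) in \(B\), needed to justify Fubini, follows since \(0<g\le1\) and the density depends continuously on \(B\) in \(L^2(\Pp)\).
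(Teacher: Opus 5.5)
Your proposal is correct and follows the paper's proof essentially step for step. The steps match: the Cauchy--Schwarz split with $G_Y^{\mp1/2}$ giving $\E[G_Y^{-1}]=C_g^m$, the block-wise factorization of the environment average into $\prod_k\Phi_g(B^{k,Y})$, backward Markov peeling to $\prod_k a_{T,g}(z_k)$, summation over coarse paths, and Jensen. Your remarks on the joint law of $(g_{k,Y},H^{(k)}(B))$ under the space--time shift, and on measurability of $\Phi_g$, are valid details that the paper leaves implicit.
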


\begin{proof}
The fields \(\omega_{k,Y}\), \(1\le k\le m\), depend on disjoint time
intervals and have the same law as the environment on \([0,T]\).
Therefore,
\[
  \E[G_Y^{-1}]
  =
  \prod_{k=1}^m\E[g_{k,Y}^{-1}]
  =
  C_g^m.
\]
Hence Cauchy--Schwarz inequality gives
\begin{equation}\label{fract12moment}
  \E\left[W_{mT}(Y)^{1/2}\right]
  =
  \E\left[
    G_Y^{-1/2}\bigl(G_YW_{mT}(Y)\bigr)^{1/2}
  \right]
  \le
  C_g^{m/2}
  \bigl(\E[G_YW_{mT}(Y)]\bigr)^{1/2}.
\end{equation}

For a fixed Brownian path, independence of the environment on disjoint
time intervals and \eqref{eq:abstract-planted-measure} give
\begin{equation}
  \E[G_YW_{mT}(Y)]
  =
  P\left[
    \prod_{k=1}^m
    \1_{\{B_{kT}\in I_{y_k}^T\}}
    \Phi_g(B^{k,Y})
  \right].
  \label{eq:abstract-block-environment-average}
\end{equation}
Let \(\mathcal B_t:=\sigma(B_s:0\le s\le t)\). The Markov property and
spatial translation invariance yield
\[
  P\left[
    \1_{\{B_{kT}\in I_{y_k}^T\}}
    \Phi_g(B^{k,Y})
    \,\middle|\,
    \mathcal B_{(k-1)T}
  \right]
  \le
  \sup_{x\in I_0^T}
  P_x\left[
    \1_{\{B_T\in I_{z_k}^T\}}\Phi_g(B)
  \right]
  =
  a_{T,g}(z_k).
\]
Iterating this inequality backward from \(k=m\) to
\(k=1\) in \eqref{eq:abstract-block-environment-average} gives
\begin{equation}
  \E[G_YW_{mT}(Y)]
  \le
  \prod_{k=1}^m a_{T,g}(z_k).
  \label{eq:abstract-global-Markov}
\end{equation}

Using the above estimates \eqref{eq:abstract-global-Markov}, \eqref{fract12moment} and \(\sqrt{\sum a} \le \sum \sqrt{a}\), we obtain
\[
  \E[W_{mT}(\beta)^{1/2}]
  \le
  C_g^{m/2}
  \sum_{Y\in(\Z^2)^m}
  \prod_{k=1}^m a_{T,g}(y_k-y_{k-1})^{1/2}
  =
  \left[
    C_g^{1/2}
    \sum_{z\in\Z^2}a_{T,g}(z)^{1/2}
  \right]^m
  =
  \rho_{T,g}^{\,m}.
\]
This proves \eqref{eq:abstract-fractional-decay}.  Finally,
\[
  \frac1{mT}\E\log W_{mT}(\beta)
  =
  \frac2{mT}\E\log W_{mT}(\beta)^{1/2}
  \le
  \frac2{mT}
  \log\E[W_{mT}(\beta)^{1/2}]
  \le
  \frac{2\log\rho_{T,g}}{T}.
\]
Letting \(m\to\infty\) proves
\eqref{eq:abstract-free-energy-upper}.
\end{proof}

\subsection{Some motivations}
\label{subsec:integrable-penalty}

By Lemma~\ref{lem:integrable-fractional-reduction}, one needs to construct a penalty \(g\), at a
block length \(T=T_\beta\), for which
\begin{equation}
  \rho_{T,g}
  :=
  \E[g^{-1}]^{1/2}
  \sum_{z\in\Z^2}a_{T,g}(z)^{1/2}
  <1,
  \label{eq:target-rho}
\end{equation}
where \(a_{T,g}\) is defined in
\eqref{eq:abstract-one-block-coefficient}. 
The two factors in \eqref{eq:target-rho} impose opposite requirements on
\(g\).  Its cost \(\E[g^{-1}]\), computed under the original environment
law, must remain bounded, whereas its expectation under the planted law
\(\Pp_B^\beta\) defined in \eqref{eq:abstract-planted-measure} must be
small.

For this purpose, let \(\mathcal A\) be an environment
event and, for \(K>1\), set
\[
  g_{K,\mathcal A}
  :=
  e^{-K}\1_{\mathcal A}+\1_{\mathcal A^c}
  =
  \exp\{-K\1_{\mathcal A}\}.
\]
Then
\begin{align*}
  \E[g_{K,\mathcal A}^{-1}]
  &=
  1+(e^K-1)\Pp(\mathcal A),\\
  \E_B^\beta[g_{K,\mathcal A}]
  &=
  e^{-K}\Pp_B^\beta(\mathcal A)
  +\Pp_B^\beta(\mathcal A^c)
  \le
  e^{-K}+\Pp_B^\beta(\mathcal A^c).
\end{align*}
Thus it is enough to find an event which is rare under \(\Pp\) but has
probability close to one under \(\Pp_B^\beta\).

We construct such an event from an environment statistic \(X\).  Suppose
that
\[
  \E[X]=0,
  \qquad
  \Var(X)=1,
\]
while, under the planted law,
\[
  \mu_B:=\E_B^\beta[X]\longrightarrow\infty,
  \qquad
  v_B:=\Var_B^\beta(X)\le C
\]
For
\[
  \mathcal A_X:=\{X>e^{K^2}\},
\]
The Chebyshev's inequality under the original environment law gives
\[
  \Pp(\mathcal A_X)
  \le
  e^{-2K^2}.
\]
On the other hand, whenever \(\mu_B>2e^{K^2}\),
\[
  \Pp_B^\beta(\mathcal A_X^c)
  =
  \Pp_B^\beta(X\le e^{K^2})
  \le
  \frac{v_B}{(\mu_B-e^{K^2})^2}
  \le
  4\frac{C}{\mu_B^2}.
\]
Consequently, the penalty
\begin{equation}
  \mathfrak g_K(X)
  :=
  \exp\left\{-K\1_{\{X>e^{K^2}\}}\right\}
  \label{eq:abstract-threshold-penalty}
\end{equation}
satisfies
\[
  \E[\mathfrak g_K(X)^{-1}]
  \le
  1+(e^K-1)e^{-2K^2},
  \qquad
  \E_B^\beta[\mathfrak g_K(X)]
  \le
  e^{-K}+4\frac{C}{\mu_B^2}.
\]
The remaining task is therefore to construct a statistic \(X=X_{q,T}\)
whose variance is normalized under \(\Pp\), whose planted mean diverges,
and whose planted variance remains bounded.  We obtain such a statistic
from an ordered Wiener chaos.

\subsection{Construction of the detector and one-block contraction}
\label{subsec:detector-construction}

Fix \(a\in(0,1)\).  Let \(A>0\) be chosen later (see \eqref{eq:specialized-planted-mean}) and, for small \(\beta\),
set
\begin{equation}
  T=T_\beta:=\exp\left\{\frac{A}{\beta^2}\right\},
  \qquad
  u:=T^a,
  \qquad
  q:=\left\lceil(\log\log T)^2\right\rceil.
  \label{eq:integrable-scales}
\end{equation}
Then
\begin{equation*}
  \beta^2\log u=aA,
  \qquad
  \frac{qu}{T}\longrightarrow0,
  \qquad
  q\log(q+1)=o(\log T).
\end{equation*}
Here \(T\) is the coarse-graining scale, \(u\) is the maximal time
separation between successive vertices of the detector, and \(q+1\) is its chaos order.

We next introduce the lower cutoff on the time gaps.  Choose \(L\ge1\)
and set
\begin{equation}
  D_L:=\{x\in\R^2:|x|\le L\},
  \qquad
 \underline{r}_q := 8L^2 q.
  \label{eq:local-mass-and-cutoff}
\end{equation}

Recall that \(I_0^T=\sqrt T[0,1)^2\).  For \(R>4\), let
\begin{equation*}
  \mathcal C_T
  :=
  \{x:\operatorname{dist}_\infty(x,I_0^T)\le R\sqrt T\}
  =
  \sqrt T[-R,R+1]^2.
\end{equation*}
The ordered time region of the detector is
\begin{equation}
  \Delta_{q,T}
  :=
  \left\{
    (t_0,\ldots,t_q):
    0<t_0<T/4,\quad
    \underline{r}_q<t_j-t_{j-1}<u,\quad 1\le j\le q
  \right\}.
  \label{Delta_q,T}
\end{equation}
For small \(\beta\), one has \(\underline{r}_q<u\) and \(qu<T/4\); hence
\(t_q<T/2\) for every \(\mathbf t\in\Delta_{q,T}\).

We now use the isonormal realization introduced in
\eqref{eq:e-x-inner-product}--\eqref{eq:H-as-isonormal}.  Thus
\(\mathfrak H_Q\) is generated by \((e_x)_{x\in\R^2}\) with
\[
  \langle e_x,e_y\rangle_{\mathfrak H_Q}=Q(x-y),
\]
and \(W\) denotes the isonormal Gaussian process over
\(L^2(\R_+;\mathfrak H_Q)\).

For \(\mathbf t=(t_0,\ldots,t_q)\in\Delta_{q,T}\), put
\(r_j=t_j-t_{j-1}\) and define
\begin{equation}
  F_{\mathbf t}
  :=
  \int_{\mathcal C_T\times(\R^2)^q}
  \left(
    \prod_{j=1}^q p_{r_j}(x_j-x_{j-1})
  \right)
  e_{x_0}\otimes\cdots\otimes e_{x_q}
  \dd x_0\cdots\dd x_q.
  \label{definition of F_t}
\end{equation}
Thus \(F_{\mathbf t}\in\mathfrak H_Q^{\otimes(q+1)}\).  The product of
heat kernels assigns to each spatial chain
\((x_0,\ldots,x_q)\) a Brownian transition weight.  Only \(x_0\) is
restricted to \(\mathcal C_T\); the remaining vertices are localized
through the heat kernels.

Let \(\mathcal I_{q+1}(F)\) denote the iterated Wiener integral with
respect to \(W\), over the ordered region \(\Delta_{q,T}\).  Define
\begin{equation}
  \sigma_{q,T}^2
  :=
  \int_{\Delta_{q,T}}
  \norm{F_{\mathbf t}}_{\mathfrak H_Q^{\otimes(q+1)}}^2
  \dd\mathbf t,
  \qquad
  X_{q,T}
  :=
  \frac{\mathcal I_{q+1}(F)}{\sigma_{q,T}}.
  \label{eq:detector-definition}
\end{equation}
By It\^o isometry,
\[
  \E[X_{q,T}]=0,
  \qquad
  \Var(X_{q,T})=1.
\]

For a fixed path \(B\), set
\begin{equation*}
  \mu_{q,T}(B)
  :=
  \E[X_{q,T}(W+\beta h_B)],
  \qquad
  v_{q,T}(B)
  :=
  \Var(X_{q,T}(W+\beta h_B)).
\end{equation*}
Here \(h_B(s)=\1_{[0,T]}(s)e_{B_s}\), as in
\eqref{eq:H-as-isonormal}, and \(W+\beta h_B\) denotes the
Cameron--Martin shift
\[
  W(f)\longmapsto
  W(f)+\beta\langle h_B,f\rangle_{L^2(\R_+;\mathfrak H_Q)}.
\]

The following three estimates, \eqref{eq:sigma-qT-bounds}--\eqref{eq:shifted-chaos-variance}, will be used to verify \eqref{eq:target-rho}. Their proofs are postponed to Section~\ref{proof of Lemma 5.3}.

\begin{lemma}
\label{lem:detector-estimates}
There are constants \(0<c_0<C_0<\infty\), independent of \(q,T\), such
that
\begin{equation}
  c_0^{q+1}T^2(\log u)^q
  \le\sigma_{q,T}^2\le
  C_0^{q+1}T^2(\log u)^q.
  \label{eq:sigma-qT-bounds}
\end{equation}

For every \(\delta>0\), \(R\) may be chosen so that, uniformly in
\(x\in I_0^T\),
\begin{equation}
  P_x\left(
    \mu_{q,T}(B)
    \ge
    \beta\big(c_1\beta^2\log u\big)^{q/2}
  \right)
  \ge1-\delta-o_\beta(1),
  \label{eq:planted-mean-high-probability}
\end{equation}
where \(c_1>0\) is independent of \(q,T\).

Finally, with \(L_T:=\log(2+qu)\),
\begin{equation}
  \sup_B v_{q,T}(B)
  \le
  1+
  e^{Cq\log(q+1)}\frac{qu}{T}
  \sum_{r=1}^q\beta^{2r}L_T^{r-1}.
  \label{eq:shifted-chaos-variance}
\end{equation}
For the scales in \eqref{eq:integrable-scales}, the second term on the
right-hand side tends to zero.
\end{lemma}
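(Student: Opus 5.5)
The three bounds are proved by writing everything in the isonormal picture: the ordered iterated integral $\mathcal I_{q+1}(F)$ with kernel $F_{\mathbf t}\in\mathfrak H_Q^{\otimes(q+1)}$, and the Cameron--Martin shift $W\mapsto W+\beta h_B$. All three quantities reduce to explicit heat-kernel/$Q$ chain integrals, which we then bound with \eqref{eq:uniform-pQ} and Lemma~\ref{lem:gaussian-bridge-insertion}.

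\emph{Variance bound \eqref{eq:sigma-qT-bounds}.} Expand
\[
\norm{F_{\mathbf t}}^2=\int \prod_j p_{r_j}(x_j-x_{j-1})\,p_{r_j}(y_j-y_{j-1})\prod_{j=0}^q Q(x_j-y_j)\,\dd\mathbf x\,\dd\mathbf y,
\]
with $x_0,y_0\in\mathcal C_T$. Pass to differences $w_j=x_j-y_j$. Each step then contributes
\[
\int p_{r_j}\ast p_{r_j}(w_j-w_{j-1})\,Q(w_j)\,\dd w_j=(p_{2r_j}\ast Q)(w_{j-1})\ \text{(shifted)}.
\]
By \eqref{eq:uniform-pQ} this is $\le C/(1+r_j)$, which gives the upper bound. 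For the lower bound, restrict $w_j$ to $D_L$. The cutoff $r_j>\underline r_q=8L^2q$ guarantees that $p_{2r_j}(w)\ge c/r_j$ on $D_L-D_L$ uniformly. With $\int_{D_L}Q\ge c$, each step contributes at least $c/r_j$. The $x_0$ (center) integral over $\mathcal C_T$ gives $\asymp T$, and $\int Q(w_0)$ is bounded. Integrating over $t_0<T/4$ gives another factor $T$, and $\int_{\underline r_q}^u \dd r/r\asymp\log u$ per gap, since $\log\underline r_q=O(\log q)=o(\log u)$. Altogether this yields $T^2(\log u)^q$ up to $C^{q+1}$.

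\emph{Planted mean \eqref{eq:planted-mean-high-probability}.} The shift replaces each $W$-increment by its mean, so
\[
\mu_{q,T}(B)=\beta^{q+1}\sigma_{q,T}^{-1}\int_{\Delta_{q,T}}\int_{\mathcal C_T\times(\R^2)^q}\prod_j p_{r_j}(x_j-x_{j-1})\prod_{j=0}^q Q(x_j-B_{t_j})\,\dd\mathbf x\,\dd\mathbf t.
\]
We take $P_x$-expectations and a second moment in $B$. Heuristically, $\int \prod p_{r_j}(x_j-x_{j-1})Q(x_j-B_{t_j})$ is comparable to the transition densities of $B$ itself, giving $\prod_j c/r_j$ times $\int_{\mathcal C_T}Q(x_0-B_{t_0})\asymp 1$, on the event that $B_{t_0}\in\mathcal C_T$. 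That event has probability at least $1-\delta$ by the choice of $R$. Integrating then gives roughly $T(c\log u)^q$, so
\[
\mu\gtrsim \beta^{q+1}T(c\log u)^q/(C^{q+1}T(\log u)^{q/2})=\beta(c_1\beta^2\log u)^{q/2}.
\]
To pass from mean to high probability in $B$, we would compute $P_x[\mu^2]$ and show it is $(1+o(1))(P_x\mu)^2$. The argument is that contributions from different $t_0$ separated by more than $qu$ decorrelate by the Markov property, and $qu/T\to0$. This concentration step is the main obstacle. It is where I expect the $o_\beta(1)$ to arise, and I would attempt it by Paley--Zygmund if the full second moment is too messy.

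\emph{Planted variance \eqref{eq:shifted-chaos-variance}.} Expand $\mathcal I_{q+1}(F)(W+\beta h_B)$ by the binomial (Hermite shift) formula: a sum over subsets $S\subset\{0,\dots,q\}$ of the deterministic contraction of $F$ against $\beta h_B$ on $S$, times the chaos of order $q+1-|S|$. The term $S=\emptyset$ has variance $1$ after normalization. The full contraction is the mean and does not enter the variance. For a proper nonempty $S$ with $r=q+1-|S|$ free vertices, the Itô isometry gives a double chain. The inserted $Q(x_i-B_{t_i})$ factors on the contracted vertices are removed by Lemma~\ref{lem:gaussian-bridge-insertion}, each costing $\log$ terms. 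The factor $\beta^{2(q+1-r)}$ combines with $\sigma^{-2}$. The surviving time integrals lose the free $T$ from $t_0$ (only a window of length $qu$ around the contacts is available), which produces the factor $qu/T$. Counting subsets gives $2^{q+1}$, and the order-dependent constants from Lemma~\ref{lem:gaussian-bridge-insertion} give at most $e^{Cq\log(q+1)}$. After rearranging powers this yields the stated bound, with $\beta^{2r}L_T^{r-1}$ indexing the number of contracted vertices. With the scales \eqref{eq:integrable-scales}, $\beta^2L_T=O(1)$, $q\log q=o(\log T)$ and $qu/T=T^{a-1}q$, so the correction term vanishes.
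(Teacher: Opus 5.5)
Your argument for \eqref{eq:sigma-qT-bounds} is the paper's. The proof of \eqref{eq:shifted-chaos-variance}, however, is missing its essential idea.

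You propose to remove \emph{every} inserted factor $Q(x_i-B_{t_i})$ with Lemma~\ref{lem:gaussian-bridge-insertion}, and you attribute the gain $qu/T$ to the time integrals losing the free $T$ from $t_0$. That is not where the gain comes from. In $\mathcal D_{A,A'}(B)=\E[\mathcal Y_A(B)\mathcal Y_{A'}(B)]$ the first paired Gaussian time $s_0$ still ranges over all of $(0,T)$.

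Once all path factors are deleted, the residual double chain no longer sees $B$. Its common spatial translation can then be controlled at best by the corridor indicators, which cost $|\mathcal C_T|\asymp T$. When a left endpoint is itself a path-factor vertex, the one-sided deletion removes even that indicator. So the best your procedure yields is $\mathcal D_{A,A'}\lesssim C^{q}T^2L_T^{q+k}$. This is a relative contribution of order $C^q(\beta^2L_T)^k\asymp C^q(aA)^k$, which is large, because $A$ must be large for the planted mean to diverge.

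The paper instead proceeds as follows:
\begin{itemize}
\item It keeps exactly one path factor, the anchor $Q(z-B_{\widehat t})$ with $j_*=\min A$, and deletes only the other $2k-1$, at cost $(CL_T)^{2k-1}$.
\item It drops the corridor indicators and translates all spatial variables by $z$. The heat kernels and the pairings $Q(x_\ell-x'_\ell)$ are invariant, so $z$ enters only through $\int Q(z-B_{\widehat t})\,\dd z=\norm{Q}_1$. This is exactly where $Q\in L^1(\R^2)$ is used.
\item The relative coordinates then give $\prod_\ell C/(1+d_\ell)$ (Lemma~\ref{lem:integrable-anchor-chain}).
\end{itemize}
Thus the spatial factor $T$ is replaced by $O(1)$. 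The anchor time is confined to a window of length at most $qu$ next to the Gaussian times, so it contributes $qu$. Together with $s_0\in(0,T)$ and the $d_\ell$-integrals this gives $\mathcal D_{A,A'}\le C^{q+1}T(qu)L_T^{q+k-1}$. The exponent $r-1$ in \eqref{eq:shifted-chaos-variance} records precisely the one undeleted factor; your phrase ``after rearranging powers'' has nothing to rest on without it.

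A secondary gap is the concentration step in \eqref{eq:planted-mean-high-probability}, which you leave open. Paley--Zygmund would only give a positive probability, not $1-\delta-o_\beta(1)$. A direct second moment of $\mathcal M_{q,T}(B)$ is also awkward. The indicator $\1_{\{x_0\in\mathcal C_T\}}$ makes the integrand depend on $B_{t_0}$ rather than on increments alone, so well-separated values of $t_0$ are not exactly uncorrelated.

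The paper's fix works on $\mathcal G_{R,T}$: substitute $x_j=B_{t_j}+y_j$ and restrict $y_j\in D_L$. Then the corridor constraint holds automatically and $\mathcal M_{q,T}\ge Z_T=\int_0^{T/4}\Phi_s\,\dd s$. Here $\Phi_s$ depends only on the increments of $B$ on $[s,s+qu]$, satisfies $\Phi_s\le C^{q+1}(\log u)^q$ deterministically, and has $P_x[\Phi_s]\ge c^{q+1}(\log u)^q$. Exact decorrelation for $|s-s'|>qu$ and Chebyshev give failure probability $\le C^{q+1}qu/T=o_\beta(1)$, and $\mathcal G_{R,T}^c$ accounts for $\delta$.
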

\begin{remark}
    It gives a successful example \(X_{q,T}\) that satisfies the framework in Section~\ref{subsec:integrable-penalty}. The above three estimates describe, respectively, the normalization of the detector, the amplification of its mean and the stability of its fluctuations under the planted law \(\Pp_B^\beta\). The main technical point is \eqref{eq:shifted-chaos-variance}. When \(Q\) is integrable, the paired heat-kernel chains can be contracted after integrating out the retained anchor \( Q(z-B_{\widehat t})\) in the whole space \(R^2\), as \eqref{eq:integrable-anchor-chain}. When \(Q\) is non-integrable, the same strategy requires retaining the anchor together with the \(\1_{\{x_0\in\mathcal C_T\}}\1_{\{x'_0\in\mathcal C_T\}}\), which produces an extra cost of \(\Lambda(T)^2\); this is the additional role of Lemma~\ref{lem:critical-detector-estimates}.
    In both regimes, the estimates for the shifted variances \(v_{q,T}(B)\) and \(v^{cr}_{q,T}(B)\) are stronger than what is required by \cite{BergerLacoin2017}; this shows that the variances under the planted law \(\Pp_B^\beta\) converge uniformly to \(1\).
\end{remark}

Now fix \(K>1\) and set
\begin{equation}
  g_\beta:=\mathfrak g_K(X_{q,T}).
  \label{eq:chaos-penalty}
\end{equation}
See the definition of $\mathfrak g_K$ in \eqref{eq:abstract-threshold-penalty}. Because \(X_{q,T}\) is centered with variance one, for sufficiently large \(K\ge1\)
\begin{equation*}
   C_K:=\E[g_\beta^{-1}]
  \le
  1+(e^K-1)e^{-2K^2}\leq2
\end{equation*}

For every fixed path \(B\), the Gaussian shift identity gives
\begin{equation}
  \E_B^\beta[G(W)]=\E\left[
    e^{\beta H_T(B)-\beta^2T/2}G(W)
  \right]
  =
  \E[G(W+\beta h_B)].
  \label{eq:path-size-bias}
\end{equation}

For \(z\in\Z^2\), define
\begin{equation*}
  a_\beta(z)
  :=
  \sup_{x\in I_0^T}
  P_x\left[
    \1_{\{B_T\in I_z^T\}}
    \E\big[g_\beta(W+\beta h_B)\big]
  \right]
\end{equation*}
and
\begin{equation*}
  \rho_\beta
  :=C_K^{1/2}\sum_{z\in\Z^2}a_\beta(z)^{1/2}.
\end{equation*}

\begin{proposition}[One-block contraction]
  \label{eq:integrable-one-block-rho}
The constants \(K,R,A\) can be chosen, independently of \(\beta\), so
that there exist \(\rho<1\) and \(\beta_0>0\) satisfying
\begin{equation*}
  \rho_\beta\le\rho,
  \qquad 0<\beta\le\beta_0.
\end{equation*}
\end{proposition}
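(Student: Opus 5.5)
The plan is to bound \(a_\beta(z)\) by splitting each Brownian path according to whether the planted mean \(\mu_{q,T}(B)\) is large, and then to sum over \(z\) using Gaussian tails. Fix \(x\in I_0^T\) and let \(E_x\) be the event in \eqref{eq:planted-mean-high-probability}, namely \(\mu_{q,T}(B)\ge\beta(c_1\beta^2\log u)^{q/2}\). By \eqref{eq:integrable-scales}, \(\beta^2\log u=aA\). Choose \(A\) so large that \(c_1aA\ge 4\); then on \(E_x\),
\[
  \mu_{q,T}(B)\ge\beta\,2^{q},
  \qquad q=\lceil(\log\log T)^2\rceil,\quad \log\log T=\log(A/\beta^2).
\]
Since \(2^q=\exp\{q\log 2\}\) grows faster than any power of \(\beta^{-1}\), we get \(\mu_{q,T}(B)\to\infty\) uniformly on \(E_x\) as \(\beta\downarrow0\). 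Hence, for \(\beta\le\beta_0(K)\), we have \(\mu_{q,T}(B)>2e^{K^2}\) on \(E_x\).

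Next we control \(g_\beta(W+\beta h_B)\) on and off \(E_x\). By \eqref{eq:shifted-chaos-variance}, \(v_{q,T}(B)\le 2\) for small \(\beta\), uniformly in \(B\). The Chebyshev computation of Section~\ref{subsec:integrable-penalty}, combined with \eqref{eq:path-size-bias}, then gives on \(E_x\)
\[
  \E[g_\beta(W+\beta h_B)]\le e^{-K}+8\mu_{q,T}(B)^{-2}\le 2e^{-K}.
\]
Off \(E_x\) we only use \(g_\beta\le1\). This yields
\[
  a_\beta(z)\le 2e^{-K}g_T(z)+\sup_{x\in I_0^T}P_x\bigl(B_T\in I_z^T,\,E_x^c\bigr).
\]
The two terms are then summed separately. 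Using \(\sqrt{a+b}\le\sqrt a+\sqrt b\), the first contributes \(\sqrt{2}e^{-K/2}\sum_z g_T(z)^{1/2}\). Here \(\sum_z g_T(z)^{1/2}=:S<\infty\) uniformly in \(T\) by \eqref{ghT:estimate}, so this term is at most \(\sqrt2 e^{-K/2}S\).

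For the second term we trade Gaussian tails against the smallness of \(P_x(E_x^c)\le\delta+o_\beta(1)\) via Cauchy--Schwarz, in the form \(P_x(B_T\in I_z^T,E_x^c)\le\min\{g_T(z),\,\delta+o(1)\}\). Split \(z\) into \(|z|\le M\) and \(|z|>M\). For \(|z|>M\), the bound \(g_T(z)^{1/2}\) gives a contribution \(\sum_{|z|>M}g_T(z)^{1/2}\), which is small for \(M\) large, uniformly in \(T\). For \(|z|\le M\), the bound \((\delta+o(1))^{1/2}\) gives a contribution at most \((2M+1)^2(\delta+o(1))^{1/2}\).

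The constants are chosen in the following order, to get \(\rho_\beta\le C_K^{1/2}(\ldots)\le\sqrt2(\ldots)<1\):
\begin{enumerate}
  \item Choose \(K\) large, so that \(C_K\le2\) and \(2e^{-K/2}S<1/4\).
  \item Choose \(M\) large, so that \(\sqrt2\sum_{|z|>M}g_T(z)^{1/2}<1/4\).
  \item Choose \(\delta\) small, so that \(\sqrt2(2M+1)^2\delta^{1/2}<1/8\), and fix \(R\) from Lemma~\ref{lem:detector-estimates} accordingly.
  \item Choose \(A\) so that \(c_1aA\ge4\).
  \item Choose \(\beta_0\) small, so that all the \(o_\beta(1)\) terms are negligible and \(\mu_{q,T}(B)>2e^{K^2}\) holds on \(E_x\).
\end{enumerate}
This gives \(\rho\le 3/4\).

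The main obstacle is making the choices consistent. We must check that \(c_1\) and the \(o_\beta(1)\) terms in Lemma~\ref{lem:detector-estimates} do not depend on \(A\) or \(K\), so that fixing \(A\) after \(R\) is legitimate. We must also check that the variance bound \eqref{eq:shifted-chaos-variance} still tends to zero for the chosen \(A\). It does: \(\beta^{2r}L_T^{r-1}\) is at most \((C\beta^2\log T)^{r}/\beta^2\), which equals \((CA)^r/\beta^2\). The factor \(qu/T=qT^{a-1}\) is \(\exp\{-(1-a)A/\beta^2\}\) up to a factor \(q\), and this beats \((CA)^q e^{Cq\log q}/\beta^2\), since \(q\) is only polylogarithmic in \(\beta^{-1}\).
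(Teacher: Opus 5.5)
Your proposal is correct and follows essentially the paper's argument: Gaussian tails handle the far boxes, and near the origin the high-probability planted-mean bound \eqref{eq:planted-mean-high-probability} is combined with the uniform bound \(\sup_B v_{q,T}(B)\le 2\) and Chebyshev, with the constants fixed in the same legitimate order (\(A\) after \(R\), \(\beta_0\) last). The only difference is cosmetic: you split on the good path event before summing over \(z\), keeping the endpoint indicator and hence using \(\sum_z g_T(z)^{1/2}<\infty\), whereas the paper simply drops the indicator for \(|z|_\infty\le M\); also, your bound \(P_x(B_T\in I_z^T,E_x^c)\le\min\{g_T(z),\delta+o(1)\}\) is just monotonicity of probability, not Cauchy--Schwarz.
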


\begin{proof}
We split the quantity to be estimated as
\begin{align*}
  \rho_\beta
  &=
  C_K^{1/2}
  \sum_{|z|_\infty\le M}a_\beta(z)^{1/2}
  +
  C_K^{1/2}
  \sum_{|z|_\infty>M}a_\beta(z)^{1/2}.
\end{align*}

Since \(0<g_\beta\le1\),
\[
  a_\beta(z)
  \le
  \sup_{x\in I_0^T}P_x(B_T\in I_z^T).
\]
Moreover, \(C_K\le2\).  Brownian scaling and Gaussian tails allow us to choose \(M\) so large that, uniformly in \(T\ge1\),
\begin{equation}
  C_K^{1/2}
  \sum_{|z|_\infty>M}a_\beta(z)^{1/2}
  \le
  \sqrt2
  \sum_{|z|_\infty>M}
  \left(
    \sup_{x\in I_0^T}P_x(B_T\in I_z^T)
  \right)^{1/2}<\frac14.
  \label{eq:far-box-contribution}
\end{equation}

It remains to control the finitely many terms with
\(|z|_\infty\le M\).  For such \(z\), we discard the endpoint constraint
in the definition of \(a_\beta(z)\) and obtain
\begin{equation*}
  a_\beta(z)
  \le
  \sup_{x\in I_0^T}
  P_x\left[\E[g_\beta(W+\beta h_B)]\right].
\end{equation*}
Thus it suffices to show the planted expectation on the right-hand side
uniformly small.

For
\(\delta>0\).  Choose \(R\) in
\eqref{eq:planted-mean-high-probability} for this value of \(\delta\),
and then choose \(A\) so large that  \(c_1aA>4\).  Since
\(\beta^2\log u=aA\), that estimate gives
\begin{equation}
  \inf_{x\in I_0^T}
  P_x\left(
    \mu_{q,T}(B)
    \ge
    \beta(c_1aA)^{q/2}
  \right)
  \ge1-\delta-o_\beta(1).
  \label{eq:specialized-planted-mean}
\end{equation}

We next substitute the chosen scales into
\eqref{eq:shifted-chaos-variance}.  Since
\(L_T=\log(2+qu)\le C_A\beta^{-2}\),
\begin{equation*}
  \sum_{r=1}^q\beta^{2r}L_T^{r-1}
  \le
  \beta^2q\max\{1,C_A^q\},
\end{equation*}
and hence
\begin{align*}
  e^{Cq\log(q+1)}
  \frac{qu}{T}
  \sum_{r=1}^q\beta^{2r}L_T^{r-1}\le
  \beta^2q^2T^{a-1}
  e^{Cq\log(q+1)}
  \max\{1,C_A^q\}
  \longrightarrow0.
\end{align*}
Consequently, for all sufficiently small $\beta$,
\[
\sup_B v_{q,T}(B) \le 2.
\]

If
\(
\mu_{q,T}(B)\ge\beta(c_1aA)^{q/2},
\)
The Chebyshev's inequality gives
\begin{equation}
  \Pp\left(
    X_{q,T}(W+\beta h_B)\le e^{K^2}
  \right)
  \le
  \frac{2}
  {\left(\beta(c_1aA)^{q/2}-e^{K^2}\right)^2}
  =
  o_\beta(1).
  \label{eq:detector-lower-tail}
\end{equation}
Combining \eqref{eq:abstract-threshold-penalty}, \eqref{eq:integrable-scales}, \eqref{eq:chaos-penalty}, \eqref{eq:specialized-planted-mean}, and \eqref{eq:detector-lower-tail}, we obtain
\begin{align*}
  &\sup_{x\in I_0^T}
  P_x\left[\E[g_\beta(W+\beta h_B)]\right]\\
  &\qquad\le
  e^{-K}
  +
  \sup_{x\in I_0^T}
  P_x\left(
    \mu_{q,T}(B)<\beta(c_1aA)^{q/2}
  \right)
  +
  \frac{2}
  {\left(\beta(c_1aA)^{q/2}-e^{K^2}\right)^2}\\
  &\qquad\le e^{-K}+\delta+o_\beta(1).
\end{align*}

We now choose \(K\) large and then \(\delta\) small enough that
\begin{equation*}
  \sqrt2\,
  \#\{z\in\mathbb Z^2:|z|_\infty\le M\}
  (e^{-K}+2\delta)^{1/2}
  <\frac14.
\end{equation*}
With the corresponding choices of \(R\) and \(A\), the preceding estimate
implies, for all sufficiently small \(\beta\),
\begin{align*}
  C_K^{1/2}
  \sum_{|z|_\infty\le M}a_\beta(z)^{1/2}
  &\le
  \sqrt2\,
  \#\{z\in\mathbb Z^2:|z|_\infty\le M\}
  (e^{-K}+2\delta)^{1/2}<\frac14.
\end{align*}
Combining this with \eqref{eq:far-box-contribution} yields
\[
  \rho_\beta<\rho=\frac12
\]
for all sufficiently small \(\beta\), which proves the proposition.
\end{proof}

The identity \eqref{eq:path-size-bias} shows that
\(a_\beta(z)=a_{T,g_\beta}(z)\) in the notation of
\eqref{eq:abstract-one-block-coefficient}. By Proposition~\ref{eq:integrable-one-block-rho} and Lemma~\ref{lem:integrable-fractional-reduction}, consequently,
\begin{equation*}
  \E[W_{mT_\beta}(\beta)^{1/2}]
  \le\rho^m,
  \qquad m\ge1,
\end{equation*}
and
\begin{equation}
  p(\beta)
  \le
  \frac{2\log\rho}{T_\beta}
  \le
  -\exp\left\{-\frac{C}{\beta^2}\right\}.
  \label{eq:integrable-free-energy-upper}
\end{equation}
This proves the required free-energy upper bound. It remains only to
justify Lemma~\ref{lem:detector-estimates}.

\subsection{Proof of Lemma~\ref{lem:detector-estimates}}
\label{proof of Lemma 5.3}

\paragraph{Proof of \eqref{eq:sigma-qT-bounds}.}
For \(y\in\R^2\), let
\[
  A_T(y):=
  |\mathcal C_T\cap(\mathcal C_T+y)|.
\]
Expanding the tensor norm in \eqref{eq:detector-definition}, using
\(\langle e_x,e_{x'}\rangle_{\mathfrak H_Q}=Q(x-x')\), and introducing
the relative coordinates \(y_j=x_j-x_j'\), we obtain
\begin{equation}
  \norm{F_{\mathbf t}}^2
  =
  \int_{(\R^2)^{q+1}}
  A_T(y_0)Q(y_0)
  \prod_{j=1}^q
  p_{2r_j}(y_j-y_{j-1})Q(y_j)
  \dd y_0\cdots\dd y_q.
  \label{eq:kernel-square-relative}
\end{equation}
Here the heat kernel \(p_{2r_j}\) results from
\begin{equation}
  \int_{\mathbb R^2}
  p_{r_j}(z)p_{r_j}(z-w)\,dz
  =
  p_{2r_j}(w),
  \label{heat-semigroup}
\end{equation}
Since \(A_T(y)\le|\mathcal C_T|\le CT\), successive applications of
\eqref{eq:uniform-pQ} give
\begin{equation}
  \norm{F_{\mathbf t}}^2
  \le
  C^{q+1}TC_Q\prod_{j=1}^qr_j^{-1}.
  \label{eq:kernel-square-upper}
\end{equation}

We now prove the reverse inequality. Recall \(D_L\) in \eqref{eq:local-mass-and-cutoff} 
and restrict every \(y_j\) in
\eqref{eq:kernel-square-relative} to \(D_L\).  Since \(D_L\) is fixed
while the diameter of \(\mathcal C_T\) is of order \(\sqrt T\),
\[
  \inf_{y\in D_L}A_T(y)\ge [(2R+1)\sqrt{T}-L]^2 \ge cT
\]
for all sufficiently large \(T\).
Moreover, for \(y,y'\in D_L\) and \(r\ge \underline{r}_q=8L^2q\), recalling \eqref{definition of F_t},
\begin{equation}
  4\pi r\,p_{2r}(y-y')=
  \exp\left\{-\frac{|y-y'|^2}{4r}\right\}\ge
  \exp\left\{-\frac{4L^2}{4\underline{r}_q}\right\}
  =e^{-1/(8q)}.
  \label{heat-kernel-density-lower-bound}
\end{equation}
Consequently,
\begin{equation}
  \norm{F_{\mathbf t}}^2
  \ge
  c^{q+1}T\prod_{j=1}^qr_j^{-1}.
  \label{eq:kernel-square-lower}
\end{equation}

Under the change of variables
\[
  (t_0,\ldots,t_q)
  \longmapsto
  (t_0,r_1,\ldots,r_q),
  \qquad r_j=t_j-t_{j-1},
\]
the region \(\Delta_{q,T}\) becomes
\[
  (0,T/4)\times(\underline{r}_q,u)^q.
\]
Its Jacobian is one, and hence
\begin{equation}
  \int_{\Delta_{q,T}}
  \prod_{j=1}^qr_j^{-1}\dd\mathbf t
  =
  \frac T4
  \left(\log\frac{u}{\underline{r}_q}\right)^q\sim\frac T4(\log u)^q.
  \label{eq:time-simplex-logarithm}
\end{equation}
Since
\[
  \log \underline{r}_q
  =
  \log(8L^2q)
  =
  O(\log q)
  =
  o(\log u),
\]
Integrating \eqref{eq:kernel-square-upper} and
\eqref{eq:kernel-square-lower} over \(\Delta_{q,T}\) proves
\eqref{eq:sigma-qT-bounds}.

\paragraph{Proof of \eqref{eq:planted-mean-high-probability}.}
For a fixed path \(B\), Applying \eqref{eq:path-size-bias} to \(X_{q,T}\) yields
\begin{equation}
  \mu_{q,T}(B)
  =
  \frac{\beta^{q+1}}{\sigma_{q,T}}\mathcal M_{q,T}(B),
  \label{mu_q,T-other-form}
\end{equation}
where
\begin{equation*}
  \mathcal M_{q,T}(B)
  :=
  \int_{\Delta_{q,T}}
  \int_{\mathcal C_T\times(\R^2)^q}
  \prod_{j=1}^q
  p_{r_j}(x_j-x_{j-1})
  \prod_{j=0}^qQ(x_j-B_{t_j})
  \dd\mathbf x\,\dd\mathbf t,
  \qquad
  r_j:=t_j-t_{j-1}.
\end{equation*}
Let
\begin{equation*}
  \mathcal G_{R,T}
  :=
  \left\{
    \sup_{0\le s\le T/2}|B_s-B_0|_\infty
    \le(R-2)\sqrt T
  \right\}.
\end{equation*}
By Brownian scaling and the maximal inequality, we have, uniformly in \(x\in I_0^T\),
\begin{equation}
\lim_{R\to\infty}\sup_{T\ge1}P_x(\mathcal G_{R,T}^c)=0
\label{restrction-corridor}
\end{equation}.

We next work on \(\mathcal G_{R,T}\). Substitute \(x_j=B_{t_j}+y_j\) and restrict
every \(y_j\) to \(D_L\). We obtain
\begin{equation}
  \mathcal M_{q,T}(B)
  \ge
  \int_0^{T/4}\Phi_s(B)\dd s,
  \label{eq:M-lower-Phi}
\end{equation}
where
\begin{align}
  \Phi_s(B)
  :=
  \int_{(\underline{r}_q,u)^q}\int_{D_L^{q+1}}
  \prod_{j=1}^q
  p_{r_j}\big(
    B_{s+r_1+\cdots+r_j}
    -B_{s+r_1+\cdots+r_{j-1}}
    +y_j-y_{j-1}
  \big)\prod_{j=0}^qQ(y_j)
  \dd\mathbf y\dd\mathbf r.
  \label{eq:Phi-s-definition}
\end{align}
For fixed \(r_j\), the Brownian increments in \eqref{eq:Phi-s-definition} are
independent and have laws \(N(0,r_jI_2)\). Therefore,
\begin{align*}
  &P_x\left[
    p_{r_j}\Big(
      B_{s+r_1+\cdots+r_j}
      -
      B_{s+r_1+\cdots+r_{j-1}}
      +y_j-y_{j-1}
    \Big)
  \right] \\
  &\qquad=
  \int_{\R^2}
  p_{r_j}(z)
  p_{r_j}(z+y_j-y_{j-1})
  \dd z
  =
  p_{2r_j}(y_j-y_{j-1}).
\end{align*}
It follows that
\begin{align*}
  P_x[\Phi_s]
  &=
  \int_{(\underline r_q,u)^q}
  \int_{D_L^{q+1}}
  \prod_{j=1}^q
  p_{2r_j}(y_j-y_{j-1})
  \prod_{j=0}^qQ(y_j)
  \dd\mathbf y\,\dd\boldsymbol r.
\end{align*}
Recalling \eqref{heat-kernel-density-lower-bound}, there exists \(c>0\), independent of \(q,T\), such that
\begin{equation*}
  P_x[\Phi_s]
  \ge
  c^{q+1}(\log u)^q
\end{equation*}
for all sufficiently small \(\beta\). Moreover, \(p_r\le(2\pi r)^{-1}\) implies
\begin{equation}
  0\le\Phi_s(B)\le
  C^{q+1}(\log u)^q.
  \label{Phi-upper-bound}
\end{equation}

Put
\[
  Z_T:=\int_0^{T/4}\Phi_s\dd s.
\]
Recalling the definition of \(\Delta_{q,T}\) in \eqref{Delta_q,T}, the variable \(\Phi_s\) depends only on Brownian increments in
\([s,s+qu]\).  Hence
\[
  \operatorname{Cov}_{P_x}(\Phi_s,\Phi_{s'})=0
  \qquad\text{whenever }|s-s'|>qu.
\]
Using \eqref{Phi-upper-bound} and integrating only over \(\{|s-s'|\le qu\}\), whose area is at most \(CTqu\), gives
\begin{align*}
  \operatorname{Var}_{P_x}(Z_T)
  &=
  \int_0^{T/4}\int_0^{T/4}
  \operatorname{Cov}_{P_x}(\Phi_s,\Phi_{s'})
  \dd s\,\dd s' \\
  &\le
  C^{q+1}Tqu(\log u)^{2q}.
\end{align*}
On the other hand,
\[
  P_x[Z_T]
  =
  \int_0^{T/4}P_x[\Phi_s]\dd s
  \ge
  c^{q+1}T(\log u)^q,
\]
Chebyshev's inequality therefore yields
\begin{align*}
  P_x\left(
    Z_T<\frac12c^{q+1}T(\log u)^q
  \right)
  &\le
  C^{q+1}\frac{qu}{T} \longrightarrow 0
  \qquad \text{as } \beta \downarrow 0.
\end{align*}
Thus, uniformly in \(x\in I_0^T\), for a constant \(c_2>0\),
\begin{equation}
  P_x\left(
    Z_T<c_2^{q+1}T(\log u)^q
  \right)
  =
  o_\beta(1).
  \label{concentration-inequality-Z_T}
\end{equation}

We can now combine \eqref{restrction-corridor} and \eqref{concentration-inequality-Z_T}.  Since
\(
\mathcal M_{q,T}(B)\ge Z_T
\)
on \(\mathcal G_{R,T}\),
\begin{equation}
  \begin{aligned}
    &P_x\left(
      \mathcal M_{q,T}(B)
      <
      c_2^{q+1}T(\log u)^q
    \right) \\
    &\qquad\le
    P_x(\mathcal G_{R,T}^{\,c})
    +
    P_x\left(
      Z_T<c_2^{q+1}T(\log u)^q
    \right)
    \le
    \delta+o_\beta(1).
  \end{aligned}
  \label{M_q,T-concentration-inequality}
\end{equation}

Finally, the upper bound in \eqref{eq:sigma-qT-bounds} gives
\[
  \sigma_{q,T}
  \le
  C_0^{(q+1)/2}T(\log u)^{q/2}.
\]
Therefore, choosing \(c_1>0\) sufficiently small, \eqref{mu_q,T-other-form} and \eqref{M_q,T-concentration-inequality} give \eqref{eq:planted-mean-high-probability}, i.e.,
\[
  \inf_{x\in I_0^T}
  P_x\left(
    \mu_{q,T}(B)
    \ge
    \beta\bigl(c_1\beta^2\log u\bigr)^{q/2}
  \right)
  \ge
  1-\delta-o_\beta(1),
\].

\paragraph{Proof of \eqref{eq:shifted-chaos-variance}.} \label{sec:proof-shifted-chaos}
Fix a path \(B\).  Under the planted environment law \(\Pp_B^\beta\), \(\E_B^\beta[X_{q,T}(W)]=\E[X_{q,T}(W+\beta h_B)]\). Since
\[
  h_B(t)=\1_{[0,T]}(t)e_{B_t},
  \qquad
  \langle e_{B_t},e_x\rangle_{\mathfrak H_Q}
  =Q(x-B_t),
\]
each Gaussian integrator in \(X_{q,T}(W+\beta h_B)\) is transformed into
\begin{equation*}
  (W+\beta h_B)(\dd t,e_x)
  =
  W(\dd t,e_x)+\beta Q(x-B_t)\dd t.
\end{equation*}

Let
\[
  J_q:=\{0,\ldots,q\}.
\]
For \(A\subseteq J_q\), define
\[
  \dd\Xi_j^A(t_j,x_j;B)
  :=
  \begin{cases}
    Q(x_j-B_{t_j})\dd t_j,
      & j\in A,\\[2mm]
    W(\dd t_j,e_{x_j}),
      & j\notin A.
  \end{cases}
\]
We then set
\begin{align*}
  \mathcal Y_A(B)
  &:=
  \int_{\Delta_{q,T}}
  \int_{\mathcal C_T\times(\R^2)^q}
  \left(
    \prod_{j=1}^q
    p_{t_j-t_{j-1}}(x_j-x_{j-1})
  \right)                                      \\
  &\hspace{25mm}\times
  \dd\Xi_0^A(t_0,x_0;B)
  \cdots
  \dd\Xi_q^A(t_q,x_q;B)
  \dd\mathbf x.
\end{align*}
Expanding \(\prod_{j=0}^q ( W(dt_j,e_{x_j}) + \beta Q(x_j-B_{t_j})\,dt_j )\) therefore gives the identity
\begin{equation}
  X_{q,T}(W+\beta h_B)
  =
  \frac1{\sigma_{q,T}}
  \sum_{A\subseteq J_q}
  \beta^{|A|}\mathcal Y_A(B).
  \label{eq:shifted-chaos-expansion}
\end{equation}

We give two special examples. Namely,
\begin{equation*}
  \mathcal Y_{\varnothing}(B)
  =
  \sigma_{q,T}X_{q,T}(W),
  \qquad
  \mathcal Y_{J_q}(B)
  =
  \mathcal M_{q,T}(B).
\end{equation*}
More generally, if \(|A|=k\), then \(\mathcal Y_A(B)\) contains
\(q+1-k\) Gaussian integrators and hence belongs to the
\((q+1-k)\)-th Wiener chaos.  Put
\begin{equation}
  Z_k(B)
  :=
  \sum_{\substack{A\subseteq J_q\\ |A|=k}}
  \mathcal Y_A(B),
  \qquad 0\le k\le q+1.
  \label{definition of Z_k}
\end{equation}
Equation \eqref{eq:shifted-chaos-expansion} becomes
\begin{equation*}
  X_{q,T}(W+\beta h_B)
  =
  \frac1{\sigma_{q,T}}
  \sum_{k=0}^{q+1}\beta^k Z_k(B).
\end{equation*}
For fixed \(B\), the variables \(Z_k(B)\) belong to mutually orthogonal
Wiener chaoses.  The term \(Z_{q+1}(B)\) is deterministic and therefore
does not contribute to the variance.  Consequently,
\begin{align}
  v_{q,T}(B)
  &=
  \frac1{\sigma_{q,T}^2}
  \sum_{k=0}^{q}
  \beta^{2k}\E\big[Z_k(B)^2\big] \notag\\
  &=
  1+
  \frac1{\sigma_{q,T}^2}
  \sum_{k=1}^{q}\beta^{2k}
  \sum_{\substack{A,A'\subseteq J_q\\|A|=|A'|=k}}
  \E\big[\mathcal Y_A(B)\mathcal Y_{A'}(B)\big].
  \label{eq:shifted-variance-contractions}
\end{align}
Here we used
\(
Z_0(B)=\sigma_{q,T}X_{q,T}(W)
\)
and
\(
\E[X_{q,T}(W)^2]=1
\).

It remains to estimate the same-order covariance terms in
\eqref{eq:shifted-variance-contractions}.  For
\(|A|=|A'|=k\), write
\begin{equation*}
  \mathcal D_{A,A'}(B)
  :=
    \E\big[\mathcal Y_A(B)\mathcal Y_{A'}(B)\big].
\end{equation*}
We shall prove that
\begin{equation}
  \mathcal D_{A,A'}(B)
  \le
  C^{q+1}T(qu)L_T^{q+k-1},
  \qquad
  L_T:=\log(2+qu),
  \label{eq:single-contraction-bound}
\end{equation}
uniformly in \(B,A,A'\).

We first write this covariance as an integral.
For \(\mathbf t=(t_0,\ldots,t_q)\in\Delta_{q,T}\), put
\[
  K_{\mathbf t}(\mathbf x)
  :=
  \1_{\{x_0\in\mathcal C_T\}}
  \prod_{j=1}^q
  p_{t_j-t_{j-1}}(x_j-x_{j-1}).
\]
Write
\[
  A^c=\{a_0,\cdots, a_{q-k}\},
  \qquad
  (A')^c=\{a'_0,\cdots,a'_{q-k}\},
\]
ordered increasingly.
For \(\mathbf s=(s_0,\ldots,s_{q-k})\), define
\[
  \mathcal T_A(\mathbf s)
  :=
  \left\{
    (t_j)_{j\in A}:
    \mathbf t\in\Delta_{q,T},
    \quad t_{a_\ell}=s_\ell,\ 0\le\ell\le q-k
  \right\},
\]
where the components of \(\mathbf t\) indexed by \(A^c\) are fixed by
\(t_{a_\ell}=s_\ell\).  Define
\(\mathcal T_{A'}(\mathbf s)\) analogously.

The It\^o isometry pairs the remaining Gaussian integrators in their natural order:
\[
  W(\dd t_{a_\ell},e_{x_{a_\ell}})
  \quad\text{with}\quad
  W(\dd t'_{a'_\ell},e_{x'_{a'_\ell}}).
\]
Their covariance is
\[
  \E\left[
    W(\dd s_\ell,e_{x_{a_\ell}})
    W(\dd s_\ell,e_{x'_{a'_\ell}})
  \right]
  =
  Q(x_{a_\ell}-x'_{a'_\ell})\dd s_\ell.
\]
Consequently,
\begin{align}
  \mathcal D_{A,A'}(B)
  &=
  \int_{0<s_0<\cdots<s_{q-k}<T}
  \int_{\mathcal T_A(\mathbf s)}
  \int_{\mathcal T_{A'}(\mathbf s)}
  \int_{(\R^2)^{2(q+1)}}
  K_{\mathbf t}(\mathbf x)
  K_{\mathbf t'}(\mathbf x')
  \notag\\
  &\quad\times
  \prod_{j\in A}Q(x_j-B_{t_j})
  \prod_{j\in A'}Q(x'_j-B_{t'_j})
  \prod_{\ell=0}^{q-k}
  Q(x_{a_\ell}-x'_{a'_\ell})
  \notag\\
  &\quad\times
  \dd\mathbf x\,\dd\mathbf x'\,
  \dd\mathbf t_A\,\dd\mathbf t'_{A'}\,\dd\mathbf s .
  \label{eq:fixed-contraction-integral}
\end{align}

We now estimate \(\mathcal D_{A,A'}(B)\) by first deleting the path
factors and then contracting the remaining spatial variables.

 Let \(j_*=\min \{A\}\) as anchor point,  and set
\(
  \widehat t:=t_{j_*}, z:=x_{j_*}.
\)
We retain the single factor
\(
  Q(z-B_{\widehat t})
\)
in the first chain, consisting of points without subscript prime,  and delete all  factors $Q(x_j-B_{t_j})$ in both chains using \eqref{eq:bridge-insertion}. The deletions are performed successively. A deleted factor at an original endpoint
is handled by the corresponding one-sided estimate.  After these
estimates, the corridor indicators are discarded using
\(\1_{\mathcal C_T}\le1\).  At most \(2k-1\) path factors are deleted.

For example, Consider first a nonendpoint path factor \(Q(x_j-B_{t_j})\) in the
first chain.  Let \(i<j<\ell\) be the nearest retained indices.  The
Gaussian bridge identity and \eqref{eq:uniform-pQ} give
\begin{align*}
  &\int_{t_i}^{t_\ell}\int_{\R^2}
  p_{t_j-t_i}(x_j-x_i)
  Q(x_j-B_{t_j})
  p_{t_\ell-t_j}(x_\ell-x_j)
  \,\dd x_j\dd t_j\\
  &\quad\le
  Cp_{t_\ell-t_i}(x_\ell-x_i)
  \int_{t_i}^{t_\ell}
  \frac{\dd t_j}
       {1+(t_j-t_i)(t_\ell-t_j)/(t_\ell-t_i)}\\
  &\quad\le
  CL_Tp_{t_\ell-t_i}(x_\ell-x_i),
  \qquad L_T:=\log(2+qu),
\end{align*}
where \(t_\ell-t_i\le qu\).  The same estimate applies to every
nonendpoint path factor in the second chain.

For a left-endpoint path factor, let \((t_\ell,x_\ell)\) be the next
retained vertex.  By \eqref{eq:uniform-pQ},
\begin{align*}
  &\int_{\substack{t_j<t_\ell\\t_\ell-t_j\le qu}}
  \int_{\R^2}
  Q(x_j-B_{t_j})
  p_{t_\ell-t_j}(x_\ell-x_j)
  \,\dd x_j\dd t_j\\
  &\quad\le
  C\int_0^{qu}\frac{\dd a}{1+a}
  \le CL_T.
\end{align*}
For a right-endpoint factor, with preceding retained vertex
\((t_i,x_i)\), the one-sided estimate is
\begin{align*}
  &\int_{\substack{t_i<t_j\\t_j-t_i\le qu}}
  \int_{\R^2}
  Q(x_j-B_{t_j})
  p_{t_j-t_i}(x_j-x_i)
  \,\dd x_j\dd t_j\\
  &\quad\le
  C\int_0^{qu}\frac{\dd a}{1+a}
  \le CL_T.
\end{align*}
Consequently, after the \(2k-1\) deletions, we have the enlarged residual
integral below (for $R_{A,A'}(B)$ see \eqref{middle-quantity-R})
\begin{equation}
  \mathcal D_{A,A'}(B)
  \le
  (CL_T)^{2k-1}\mathcal R_{A,A'}(B).
  \label{eq:integrable-path-reduction}
\end{equation}

Put \(n:=q-k\), and relabel the paired Gaussian vertices as
\[
  (s_0,x_0),\ldots,(s_n,x_n),
  \qquad
  (s_0,x'_0),\ldots,(s_n,x'_n),
\]
The symbol \(z\) retains the original anchor coordinate and is distinct
from these relabeled Gaussian variables; if \(j_*=0\), then \(z\) is the
original first-chain left endpoint.
Put
\[
  \mathscr S_{A,A'}:=
  \left\{\mathbf s:0<s_0<\cdots<s_n<T,\ 
  \mathcal T_A(\mathbf s)\ne\varnothing,\ 
  \mathcal T_{A'}(\mathbf s)\ne\varnothing\right\},
\]
let \(\kappa_*\in\{0,\ldots,n+1\}\) be determined by
\[
\kappa_*=
\begin{cases}
0,&j_*<a_0,\\
\kappa,&a_{\kappa-1}<j_*<a_\kappa,\quad 1\le\kappa\le n,\\
n+1,&j_*>a_n.
\end{cases}
\]
Since \(j_*\in A\) and \(a_\ell\in A^c\), the inequalities are strict
and \(\widehat t=t_{j_*}\) never equals a Gaussian time \(s_\ell\).
Define the single residual anchor-time fiber
\[
  G_0(\mathbf s):=(0,s_0),\qquad
  G_\kappa(\mathbf s):=(s_{\kappa-1},s_\kappa)\ (1\le\kappa\le n),
  \qquad
  G_{n+1}(\mathbf s):=(s_n,T),
\]
\[
  H_{A,j_*}(\mathbf s)
  :=
  \{t_{j_*}\in G_{\kappa_*}(\mathbf s) :(t_j)_{j\in A}\in\mathcal T_A(\mathbf s)\}.
\]
Moreover, since every original gap is
smaller than \(u\),
\[
  H_{A,j_*}(\mathbf s)\subset
  \begin{cases}
    (0,T)\cap(s_0-qu,s_0),&\kappa_*=0,\\
    (s_{\kappa_*-1},s_{\kappa_*}),&1\le\kappa_*\le n,\\
    (s_n,s_n+qu)\cap(0,T),&\kappa_*=n+1,
  \end{cases}
  \qquad
  |H_{A,j_*}(\mathbf s)|\le qu.
\]

For \(\widehat t\in H_{A,j_*}(\mathbf s)\), put
\[
  d_\ell:=s_\ell-s_{\ell-1},\qquad 1\le\ell\le n,
\]
\[
  \alpha:=
  \begin{cases}
    s_0-\widehat t,&\kappa_*=0,\\
    \widehat t-s_{\kappa_*-1},&1\le\kappa_*\le n,\\
    \widehat t-s_n,&\kappa_*=n+1.
  \end{cases}
\]
Define the retained first-chain kernels by
\begin{align}
\Pi^{(1)}_{0,\alpha}(z,\mathbf x)
&:=
p_\alpha(x_0-z)
\prod_{\ell=1}^{n}p_{d_\ell}(x_\ell-x_{\ell-1}),
\notag\\
\Pi^{(1)}_{\kappa,\alpha}(z,\mathbf x)
&:=
\left(\prod_{\ell=1}^{\kappa-1}
p_{d_\ell}(x_\ell-x_{\ell-1})\right)
 p_\alpha(z-x_{\kappa-1})
\notag\\[-1mm]
&\hspace{12mm}\times
p_{d_\kappa-\alpha}(x_\kappa-z)
\left(\prod_{\ell=\kappa+1}^{n}
p_{d_\ell}(x_\ell-x_{\ell-1})\right),
\qquad 1\le\kappa\le n,
\notag\\
\Pi^{(1)}_{n+1,\alpha}(z,\mathbf x)
&:=
\left(\prod_{\ell=1}^{n}
p_{d_\ell}(x_\ell-x_{\ell-1})\right)
p_\alpha(z-x_n).
\label{eq:integrable-first-anchor-kernels}
\end{align}
The second-chain kernel is
\begin{align}
\Pi^{(2)}_{\mathbf s}(\mathbf x')
&:=
\prod_{\ell=1}^{n}
p_{d_\ell}(x'_\ell-x'_{\ell-1}).
\label{eq:integrable-anchor-chain-kernels}
\end{align}
The indices \(0\), \(1\le\kappa\le n\), and \(n+1\) correspond,
respectively, to an anchor before \(s_0\), inside
\((s_{\kappa-1},s_\kappa)\), and after \(s_n\).  Set
\begin{align}
\mathcal K_{\mathbf s,\widehat t}(B)
:={}&
\int_{(\R^2)^{2n+3}}
Q(z-B_{\widehat t})
\Pi^{(1)}_{\kappa_*,\alpha}(z,\mathbf x)
\Pi^{(2)}_{\mathbf s}(\mathbf x')
\prod_{\ell=0}^{n}Q(x_\ell-x'_\ell)\,
\dd z\,\dd\mathbf x\,\dd\mathbf x'.
\label{eq:integrable-anchor-fiber}
\end{align}
The two corridor indicators have been removed here using
\(\1_{\mathcal C_T}\le1\); the deleted path times have already been
integrated in the preceding estimates.  Thus the enlarged residual
integral in \eqref{eq:integrable-path-reduction} is
\begin{equation}
  \mathcal R_{A,A'}(B)
  :=
  \int_{\mathscr S_{A,A'}}
  \int_{H_{A,j_*}(\mathbf s)}
  \mathcal K_{\mathbf s,\widehat t}(B)\,
  \dd\widehat t\,\dd\mathbf s .
  \label{middle-quantity-R}
\end{equation}

\begin{lemma}
\label{lem:integrable-anchor-chain}
Assume \(Q\in L^1(\R^2)\) and \eqref{eq:uniform-pQ}.  For the kernels
above, uniformly in \(B\),
\begin{align}
\mathcal K_{\mathbf s,\widehat t}(B)
&=
\norm{Q}_1
\int_{(\R^2)^{n+1}}
Q(y_0)
\prod_{\ell=1}^{n}
p_{2d_\ell}(y_\ell-y_{\ell-1})Q(y_\ell)\,
\dd\mathbf y
\notag\\
&\le
C^{n+1}\prod_{\ell=1}^{n}\frac1{1+d_\ell},
\qquad
y_\ell:=x_\ell-x'_\ell,\quad 0\le\ell\le n.
\label{eq:integrable-anchor-chain}
\end{align}
Every product over an empty index set is \(1\); in particular, the
right-hand side is \(C\) when \(n=0\).
\end{lemma}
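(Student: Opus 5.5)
The plan is to integrate out the anchor variable \(z\) first and then the remaining variables in relative coordinates, exactly as in the proof of \eqref{eq:kernel-square-relative}.

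\emph{Step 1: integrating out the anchor.} Among all factors in \eqref{eq:integrable-anchor-fiber}, only \(Q(z-B_{\widehat t})\) and the one or two heat kernels of \(\Pi^{(1)}_{\kappa_*,\alpha}\) that are adjacent to \(z\) involve \(z\).
\begin{itemize}
\item \emph{Endpoint cases} \(\kappa_*\in\{0,n+1\}\). Integrate over \(x'\)-free variables in the order \(z\) last. Substitute \(w=z-B_{\widehat t}\). Then \(\int p_\alpha(x_0-z)\,dx_0=1\) (or the analogous identity with \(x_n\)) removes the heat kernel once the chain is integrated in the appropriate order. The \(z\)-integral then produces exactly \(\int Q(w)\,dw=\norm Q_1\). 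To make this rigorous, change variables \(x_\ell\mapsto x_\ell-z\) and \(x'_\ell\mapsto x'_\ell-z\) for all \(\ell\). The product \(\prod Q(x_\ell-x'_\ell)\) and the chain kernels are invariant under this common translation, so \(z\) survives only in \(Q(z-B_{\widehat t})\), and the \(z\)-integral gives \(\norm Q_1\).
\item \emph{Interior case} \(1\le\kappa_*\le n\). Here \(z\) sits between \(x_{\kappa-1}\) and \(x_\kappa\). The translation trick does not decouple \(z\) directly, since \(z\) is tied to both neighbours. Instead, apply the Chapman--Kolmogorov identity in a form that keeps the anchor: \(\int p_\alpha(z-x_{\kappa-1})p_{d_\kappa-\alpha}(x_\kappa-z)Q(z-B_{\widehat t})\,dz\). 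Equality with \(\norm Q_1\) times a clean chain is not literally available here. I would therefore read the stated equality as valid after a global translation plus integration over \(z\) with the chain, and in that case prove only the inequality, via \eqref{eq:bridge-insertion}. This is the step I expect to be delicate.
\end{itemize}
The cleanest uniform route for the inequality is the following. Translate every vertex of both chains by \(-z\). Each \(Q(x_\ell-x'_\ell)\) and each heat-kernel increment is invariant, while \(Q(z-B_{\widehat t})\) becomes the sole \(z\)-dependence. Integrating \(z\) gives \(\norm Q_1\) times the anchored integral with \(z=0\) pinned. Then bound the pinned \(p_\alpha(\cdot)\,p_{d_\kappa-\alpha}(\cdot)\) product by its unpinned convolution in the sense of integrating the pin position, which is the content of the claimed equality.

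\emph{Step 2: the chain after the anchor is removed.} What remains is
\[
\int Q(x_0-x'_0)\prod_{\ell=1}^n p_{d_\ell}(x_\ell-x_{\ell-1})\,p_{d_\ell}(x'_\ell-x'_{\ell-1})\,Q(x_\ell-x'_\ell),
\]
with the first chain free of any corridor indicator. Pass to the relative coordinates \(y_\ell=x_\ell-x'_\ell\) together with the centres of mass. Integrating the centres of mass via \eqref{heat-semigroup} yields \(\prod p_{2d_\ell}(y_\ell-y_{\ell-1})\), which is the displayed middle expression.

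\emph{Step 3: the numerical bound.} Integrate \(y_n,y_{n-1},\dots,y_1\) successively using \eqref{eq:uniform-pQ}. Each step gives \(\sup_w (p_{2d_\ell}*Q)(w)\le C/(1+d_\ell)\). The last factor is \(\int Q(y_0)\,dy_0=\norm Q_1\). This yields \(C^{n+1}\prod_{\ell}(1+d_\ell)^{-1}\), uniformly in \(B\), since the path entered only through the translation-invariant \(z\)-integral. The case \(n=0\) reduces to \(\norm Q_1^2\le C\).
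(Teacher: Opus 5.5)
Your skeleton matches the paper's: translate by $-z$ to extract $\norm{Q}_1$, pass to $y_\ell=x_\ell-x'_\ell$, contract intervals with \eqref{heat-semigroup}, then peel off $y_n,\dots,y_1$ with \eqref{eq:uniform-pQ}. Your Step~3 is correct. The gap is in how you use the kernels adjacent to $z$.

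\textbf{The interior case.} Your claim that for $1\le\kappa_*\le n$ the translation ``does not decouple $z$'' is false, and so is the claim that the equality is ``not literally available''. Put $u_\ell:=x_\ell-z$ and $v_\ell:=x'_\ell-z$. The split kernels become $p_\alpha(u_{\kappa_*-1})\,p_{d_{\kappa_*}-\alpha}(u_{\kappa_*})$, which do not involve $z$. So $z$ enters only through $Q(z-B_{\widehat t})$, exactly as in the endpoint cases.

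\textbf{The fallback via \eqref{eq:bridge-insertion}.} This route cannot work. Integrating $z$ against $p_\alpha\,Q\,p_{d_{\kappa_*}-\alpha}$ leaves a multiple of $p_{d_{\kappa_*}}(x_{\kappa_*}-x_{\kappa_*-1})$. The remaining integrand is then invariant under a common translation of all $2n+2$ vertices, so its integral is $+\infty$. This is exactly why the anchor is retained, not deleted, in \eqref{eq:integrable-path-reduction}.

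\textbf{Step~2 as displayed.} The integral there has the same defect: it contains no pinned factor, so it diverges. The identity \eqref{heat-semigroup} absorbs only $n$ of the $n+1$ absolute coordinates. ``Bounding the pinned product by its unpinned convolution'' therefore replaces a finite quantity by an infinite one.

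\textbf{The missing step.} You need a final contraction that uses the pinned kernels to absorb the last translation degree of freedom. After the translation and the substitution $v_\ell=u_\ell-y_\ell$, integrate the absolute coordinates of the ordinary intervals from both ends towards the anchor, using \eqref{heat-semigroup}. Each such integration yields $p_{2d_\ell}(y_\ell-y_{\ell-1})$.

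In the endpoint cases, the one remaining absolute variable ($u_0$ or $u_n$) then appears only in $p_\alpha$, which integrates to $1$. So the order ``chain first, $p_\alpha$ last'' is essential, not cosmetic.

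In the interior case, set $d:=d_{\kappa_*}$ and $\delta:=y_{\kappa_*}-y_{\kappa_*-1}$. The two remaining variables give
\[
  \iint p_\alpha(\xi)\,p_{d-\alpha}(\zeta)\,p_{d}(\zeta-\xi-\delta)\,\dd\xi\,\dd\zeta
  =p_{2d}(\delta).
\]
This holds because $\zeta-\xi\sim N(0,d\,I_2)$ when $\xi\sim N(0,\alpha I_2)$ and $\zeta\sim N(0,(d-\alpha)I_2)$ are independent.

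Thus the equality in \eqref{eq:integrable-anchor-chain} holds for every anchor position, and your Step~3 then finishes the proof.
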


\begin{proof}
Set \(u_\ell:=x_\ell-z\), \(v_\ell:=x'_\ell-z\), and
\(y_\ell:=u_\ell-v_\ell\). All heat kernels and all factors \(Q(x_\ell-x'_\ell)\) are invariant
under the common translation by \(z\).  Hence the only remaining
\(z\)-dependence is \(Q(z-B_{\widehat t})\), whose integral is
\(\norm{Q}_1\).

It remains to integrate the absolute coordinates \(u_\ell,v_\ell\).
Integrate the ordinary intervals from the two ends towards the interval
containing \(\widehat t\), using
\[
  \int_{\R^2}p_d(r+\delta)p_d(r)\,\dd r
  =p_{2d}(\delta).
\]
After all ordinary intervals have been contracted, the split interval,
when \(1\le\kappa_*\le n\), gives, with
\(\alpha=\widehat t-s_{\kappa_*-1}\),
\[
\begin{aligned}
&\int_{\R^2\times\R^2}
 p_a(v+y_{\kappa_*-1})
 p_{d_{\kappa_*}-\alpha}(w+y_{\kappa_*})
 p_{d_{\kappa_*}}(w-v)\,\dd v\,\dd w\\
&\hspace{35mm}
=p_{2d_{\kappa_*}}(y_{\kappa_*}-y_{\kappa_*-1}).
\end{aligned}
\]
When \(\kappa_*=0\) or \(\kappa_*=n+1\), the boundary factor integrates
to \(1\), and the same one-variable convolution is applied to all
\(n\) ordinary intervals.  This proves the equality in
\eqref{eq:integrable-anchor-chain} for all possible anchor positions.

Finally, \eqref{eq:uniform-pQ} gives
\[
  \sup_{v\in\R^2}
  \int_{\R^2}p_{2d}(y-v)Q(y)\,\dd y
  \le \frac{C}{1+d}.
\]
Applying this estimate successively to \(y_n,\ldots,y_1\), and then
integrating \(Q(y_0)\), proves the last inequality.
\end{proof}

The time constraints imply
\(0<d_\ell<qu\).  Applying
\eqref{eq:integrable-anchor-chain} on each fiber gives
\begin{align}
\mathcal R_{A,A'}(B)
&\le
C^{n+1}
\int_{\mathscr S_{A,A'}}
\bigl|H_{A,j_*}(\mathbf s)\bigr|
\prod_{\ell=1}^{n}\frac1{1+d_\ell}\,\dd\mathbf s \notag\\
&\le
C^{q+1}T(qu)
\prod_{\ell=1}^{n}
\int_0^{qu}\frac{\dd d_\ell}{1+d_\ell} \notag\\
&\le
C^{q+1}T(qu)L_T^{\,q-k}.
\label{R-upper-bound}
\end{align}
In the second line we used the change of variables
\((s_0,\ldots,s_n)\mapsto(s_0,d_1,\ldots,d_n)\), enlarged the domain. When \(n=0\), the \(d\)-integral and the product are empty and equal to \(1\).

Combining \eqref{R-upper-bound} with
\eqref{eq:integrable-path-reduction}, we obtain
\begin{align}
  \mathcal D_{A,A'}(B)
  &\le
  (CL_T)^{2k-1}
  C^{q+1}T(qu)L_T^{q-k} \notag\\
  &\le
  C^{q+1}T(qu)L_T^{q+k-1}.
  \label{eq:fixed-contraction-bound}
\end{align}

We now sum \eqref{eq:fixed-contraction-bound} over the subsets appearing
in \(Z_k(B)\).  By \eqref{definition of Z_k},
\begin{align*}
  \E[Z_k(B)^2]
  &=
  \sum_{\substack{A,A'\subseteq J_q\\ |A|=|A'|=k}}
  \mathcal D_{A,A'}(B)\\
  &\le
  \binom{q+1}{k}^2
  C^{q+1}T(qu)L_T^{q+k-1}.
\end{align*}
This estimate is uniform in \(B\).  Since
\(\binom{q+1}{k}^2\le4^{q+1}\) and \(L_T\le C\log u\), the lower bound in
\eqref{eq:sigma-qT-bounds} yields
\begin{align}
  v_{q,T}(B)
  &=\Var\bigl(X_{q,T}(W+\beta h_B)\bigr) \notag\\
  &\le
  1+
  e^{Cq\log(q+1)}
  \frac{qu}{T}
  \sum_{k=1}^{q}\beta^{2k}L_T^{k-1}.
  \label{shifted-chaos-variance-upper-bound}
\end{align}
This proves \eqref{eq:shifted-chaos-variance}.  Under the scale choices
in \eqref{eq:integrable-scales}, the second term tends to zero as
\(\beta\downarrow0\), and hence \(v_{q,T}(B)=1+o_\beta(1)\).

\subsection{The matching lower bound}
\label{subsec:integrable-matching-lower}

It remains to prove that the lower bound. By the replica comparison
\eqref{eq:replica-pinning-comparison}, it is enough to control the
two-dimensional pinning free energy for a bounded integrable potential.

\begin{lemma}
\label{lem:two-d-integrable-pinning}
Let \(V:\R^2\to[0,\infty)\) be bounded and integrable. There are
\(c,C,h_0>0\) such that
\begin{equation}
  F_V(h)\le C\exp\left\{-\frac{c}{h}\right\},
  \qquad 0<h\le h_0.
  \label{eq:two-d-pinning-upper}
\end{equation}
\end{lemma}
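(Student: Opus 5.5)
Using the variational formula of Lemma~\ref{lem:variational}, it suffices to show that for every \(f\in H^1(\R^2)\) with \(\norm f_2=1\),
\[
  h\int_{\R^2}Vf^2\dd x-\frac12\norm{\nabla f}_2^2\le C\e^{-c/h}.
\]
The mechanism is the two-dimensional logarithmic failure of the Sobolev embedding. Write \(\varepsilon:=\norm{\nabla f}_2^2\). First I bound \(\int Vf^2\) in terms of \(\varepsilon\) with a logarithmic gain. Split \(V=V\1_{D_\ell}+V\1_{D_\ell^c}\) with \(\ell=\ell(\varepsilon)\) to be chosen. Since \(V\) is integrable and bounded, the natural estimate is
\[
  \int Vf^2\le \norm V_1\,\norm{f}_{L^\infty\text{-avg}}^2,
\]
but \(f\) need not be bounded. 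I will instead use the heat-semigroup smoothing \(f=P_\tau f+(f-P_\tau f)\) with \(\norm{f-P_\tau f}_2^2\le C\tau\varepsilon\) and \(\norm{P_\tau f}_\infty^2\le C/\tau\). This gives
\[
  \int Vf^2\le 2\norm V_1\frac{C}{\tau}+2\norm V_\infty C\tau\varepsilon.
\]
This is not logarithmic, so it is too crude. The sharp tool is the Brezis--Gallouet / Moser--Trudinger type estimate in \(d=2\): for \(\norm f_2=1\),
\[
  \int Vf^2\le C\,\frac{\norm V_1+\norm V_\infty}{\log(2+1/\varepsilon)}\quad\text{whenever }\varepsilon\le1,
\]
or equivalently a bound on the bottom of the spectrum.

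Cleaner route (the one I would actually carry out): the supremum in \eqref{eq:pinning-variational} equals the top eigenvalue \(E=F_V(h)\) of \(\frac12\Delta+hV\). If \(E>0\), the Birman--Schwinger principle gives that \(1\) is an eigenvalue of
\[
  K_E:=h\,V^{1/2}\Bigl(E-\tfrac12\Delta\Bigr)^{-1}V^{1/2}.
\]
Hence \(1\le \norm{K_E}\le h\,\mathrm{tr}\) or Hilbert--Schmidt-type bound. The resolvent kernel is \(G_E(x-y)=\frac1\pi K_0(\sqrt{2E}|x-y|)\), which satisfies
\[
  G_E(r)\le \frac1\pi\log\Bigl(1+\frac{1}{\sqrt{E}\,r}\Bigr)+C.
\]
Using Schur's test with the weight \(V^{1/2}\):
\[
  \norm{K_E}\le h\sup_x\int G_E(x-y)V(y)\dd y\le h\bigl(C_1\norm V_1\log(1/E)+C_2\bigr),
\]
where the local singularity \(\log(1/|x-y|)\) is integrated against the bounded \(V\) near \(x\), and the far part uses \(G_E\le C\log(1/E)\) on \(\{|x-y|\le E^{-1/2}\}\) and exponential decay beyond. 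Thus \(1\le hC(\log(1/E)+1)\), i.e. \(E\le C\e^{-c/h}\) for \(h\le h_0\).

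The main obstacle is justifying the Schur bound uniformly, in particular splitting \(\int G_E(x-y)V(y)\dd y\) into \(|x-y|\le1\), where \(\log(1/r)\) is locally integrable and bounded times \(\norm V_\infty\), and \(|x-y|>1\), where \(G_E\le C(1+\log_+(1/\sqrt E))\) times \(\norm V_1\). Once this is in place, the Birman--Schwinger step is standard, since \(V\) is bounded, nonnegative and integrable, so \(K_E\) is a bounded positive operator whose norm is decreasing in \(E\). Combined with \eqref{eq:replica-pinning-comparison}, this yields the lower bound in \eqref{eq:two-d-integrable-free-energy} with \(h=2\beta^2\).
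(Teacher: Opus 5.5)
Your Birman--Schwinger argument is correct, but it is not the route the paper takes. The paper never touches the spectrum. It bounds
\[
  \eta(T)=\sup_{x}\int_0^T(p_s*V)(x)\dd s\le\norm{V}_\infty+\frac{\norm{V}_1}{2\pi}\log T,
\]
and applies Khas'minskii's lemma (Lemma~\ref{lem:upper}) on one block of length $T_h=\exp\{b/h\}$, chosen so that $h\eta(T_h)\le\frac12$. It then chains blocks with the Markov property to get $F_V(h)\le(\log 2)/T_h$.

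You instead go through Lemma~\ref{lem:variational} to $\sup\sigma(\frac12\Delta+hV)$ and bound $\norm{K_E}\le h\sup_x(G_E*V)(x)$. The two arguments are dual forms of one estimate. Since $G_E=\int_0^\infty e^{-Et}p_t\dd t$, your resolvent bound is the Laplace transform of the paper's bound on $\eta(T)$, with $E$ in the role of $1/T$. The paper's proof is more elementary: it needs neither the variational formula nor any spectral input, and it controls the Feynman--Kac functional uniformly in the starting point. Yours, written with the display below, gives the explicit exponent $c=2\pi/\norm{V}_1$, which is the optimal constant for weakly coupled two-dimensional bound states.

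Three small repairs are needed.

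(i) Saying that the supremum is the top eigenvalue requires $\sigma_{\mathrm{ess}}(\frac12\Delta+hV)\subset(-\infty,0]$. Integrability does not force $V\to0$ at infinity. However, $V\in L^1\cap L^\infty\subset L^2(\R^2)$ makes $V(1-\Delta)^{-1}$ Hilbert--Schmidt, so Weyl's theorem applies. Alternatively, skip eigenvalues entirely. If $\sup\sigma>E>0$, the variational formula gives $f$ with $h\norm{V^{1/2}f}_2^2>\norm{(E-\frac12\Delta)^{1/2}f}_2^2$. Hence $\norm{K_E}=h\norm{V^{1/2}(E-\frac12\Delta)^{-1/2}}^2>1$.

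(ii) Your near-diagonal bookkeeping is off. For $r\le1$ one has $G_E(r)=\frac1\pi\log\frac1r+\frac1{2\pi}\log\frac1E+O(1)$. So the region $\{|x-y|\le1\}$ contributes $O(\norm{V}_\infty\log(1/E))$, not $O(1)$. This does not hurt the conclusion $1\le hC(\log(1/E)+1)$, but it is cleaner to write
\[
  \sup_x(G_E*V)(x)\le\int_0^\infty e^{-Et}\min\Bigl\{\norm{V}_\infty,\frac{\norm{V}_1}{2\pi t}\Bigr\}\dd t\le\norm{V}_\infty+\frac{\norm{V}_1}{2\pi}\Bigl(\log\frac1E+C\Bigr),\qquad 0<E\le1.
\]
The middle term is bounded uniformly for $E\ge1$, which forces $E<1$ once $h$ is small.

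(iii) The Brezis--Gallou\"et-type bound you first float is not ``the sharp tool''. Inserting $\int Vf^2\le C/\log(2+1/\varepsilon)$ into \eqref{eq:pinning-variational} only gives $F_V(h)\lesssim h/\log(1/h)$. What would work is $\int Vf^2\lesssim\varepsilon\log(1/\varepsilon)$ for small $\varepsilon$, which is essentially equivalent to the lemma itself. Likewise, a trace bound on $K_E$ fails because $G_E(0)=\infty$ in $d=2$. Since you discard these in favour of the Birman--Schwinger route, they do not affect the proof.
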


\begin{proof}
Set
\[
  \eta(T)
  :=
  \sup_{x\in\R^2}E_x\left[\int_0^T V(B_s)\dd s\right]
  =
  \sup_{x\in\R^2}\int_0^T(p_s*V)(x)\dd s.
\]
For \(0<s\le1\),
\[
  \sup_x(p_s*V)(x)\le\norm V_\infty,
\]
whereas, for \(s\ge1\),
\[
  \sup_x(p_s*V)(x)
  \le\norm{p_s}_\infty\norm V_1
  =\frac{\norm V_1}{2\pi s}.
\]
Consequently,
\begin{equation}
  \eta(T)
  \le
  \norm V_\infty+\frac{\norm V_1}{2\pi}\log T
  =:C_0+C_1\log T,
  \qquad T\ge1.
  \label{eq:two-d-khasminskii-log}
\end{equation}

Choose \(b>0\) so small that \(C_1b\le1/4\), and put
\[
  T_h:=\exp\{b/h\}.
\]
If \(h\le h_0\), with \(h_0C_0\le1/4\), then
\(h\eta(T_h)\le1/2\). 
Lemma~\ref{lem:upper}, gives
\begin{equation}
  \sup_{x\in\R^2}E_x\left[
    \exp\left\{h\int_0^{T_h}V(B_s)\dd s\right\}
  \right]
  \le2.
  \label{eq:two-d-one-block}
\end{equation}
For any $t = nT_h + r$ with $0 \le r < T_h$, partition the interval $[0,t]$ into blocks of length $T_h$. On each block, apply the uniform bound in \eqref{eq:two-d-one-block}; the final block of length $r$ is controlled by the same bound. By the Markov property at the block endpoints, iterating gives
\[
E_x\left[\exp\left\{h\int_0^t V(B_s)\,ds\right\}\right] \le 2^{n+1}.
\]
Taking logarithms and dividing by $t$,
\[
\frac1t \log E_x\left[\exp\left\{h\int_0^t V(B_s)\,ds\right\}\right]
\le \frac{(n+1)\log 2}{nT_h + r}.
\]
Let $t \to \infty$, therefore,
\[
F_V(h) = \limsup_{t\to\infty} \frac1t \log E_x\left[\exp\left\{h\int_0^t V(B_s)\,ds\right\}\right]
\le \frac{\log 2}{T_h}
= (\log 2)\exp\left\{-\frac{b}{h}\right\},
\]
which proves the lemma.
\end{proof}

Apply Lemma~\ref{lem:two-d-integrable-pinning} to
\[
  V(x)=Q(\sqrt2x).
\]
The replica comparison \eqref{eq:replica-pinning-comparison} gives
\begin{equation}
  p(\beta)
  \ge
  -C F_{Q(\sqrt2\,\cdot)}(2\beta^2)
  \ge
  -C\exp\left\{-\frac{c}{\beta^2}\right\}.
  \label{eq:integrable-free-energy-lower}
\end{equation}
Together with \eqref{eq:integrable-free-energy-upper}, this proves
\eqref{eq:two-d-integrable-free-energy}. This completes the proof of Theorem~\ref{thm:radial-regimes}(ii).

\section{The two-dimensional critical spatial correlation}
\label{sec:two-dimensional-critical}

In this section we prove Theorem~\ref{thm:radial-regimes}(iii). Thus
\(d=2\), and the radial tail assumption \eqref{eq:radial-tail-class} is with \(\vartheta=2\) throughout. We retain the heat kernel \(p_t\)
from \eqref{eq:two-d-heat-kernel} and put
\[
  \Lambda(t):=1+\log(2+t),
  \qquad t\ge0.
\]
The abstract fractional-moment reduction of
Subsection~\ref{subsec:abstract-fractional-reduction} and the penalty \eqref{eq:abstract-threshold-penalty} will be used. We follow the same order of
argument: construct the detector, prove the one-block contraction, and only
then verify the estimates on the detector. Unlike
Section~\ref{sec:two-dimensional-integrable}, we have
$Q\notin L^1(\R^2)$. This causes many difficulties in the estimation of the shifted variance \eqref{eq:critical-shifted-variance}, but the strategy is analogous: first delete the path factor, then integrate over the spatial variables at non-initial points, and finally extend the integration to a larger time region.

\subsection{Some estimates used in Section~\ref{subsec:critical-detector-estimates-proof}}
We first record the convolution bounds needed below. The tail assumption
\eqref{eq:radial-tail-class} with \(\vartheta=2\) implies
\begin{equation}
  \sup_{z\in\R^2}(p_t*Q)(z)
  \le C\frac{\Lambda(t)}{1+t},
  \qquad t>0.
  \label{eq:critical-pQ}
\end{equation}
Indeed, with \(w(x):=(1+|x|)^{-2}\), the upper tail bound and the fact
that the convolution of two radial decreasing functions is maximized
at the origin give, for \(t\ge2\),
\[
  \sup_z(p_t*Q)(z)
  \le C(p_t*w)(0)
  =\frac Ct\int_0^\infty
    e^{-r^2/(2t)}\frac{r}{(1+r)^2}\dd r
  \le C\frac{\log t}{t}.
\]
For \(t<2\), use \(0\le Q\le Q(0)=1\). We next estimate the convolution of the two \(Q\)-factors. Set
\(R:=|z|\). Since \(Q\in L^2(\R^2)\), the desired bound is immediate
when \(R\le2\). Suppose therefore that \(R>2\), and decompose the
convolution into
\[
  D_1:=\{|x|\le R/2\},
  \qquad
  D_2:=\{|z-x|\le R/2\},
\]
and their complement \(D_3\). On \(D_1\), we have
\(|z-x|\ge R/2\), and hence
\begin{align*}
  \int_{D_1}Q(x)Q(z-x)\dd x
  \le
  \frac{C}{(1+R)^2}
  \int_0^{R/2}\frac{r}{(1+r)^2}\dd r\le
  C\frac{\Lambda(R)}{(1+R)^2}.
\end{align*}
The contribution of \(D_2\) satisfies the same bound. On \(D_3\), split
further according to \(|x|\le2R\) or \(|x|>2R\). Using
\(|x|,|z-x|\ge R/2\) in the first region and
\(|z-x|\ge |x|/2\) in the second, we obtain
\[
  \int_{D_3}Q(x)Q(z-x)\dd x
  \le
  \frac{C}{R^4}R^2
  +C\int_{2R}^{\infty}\frac{\dd r}{r^3}
  \le\frac{C}{R^2}.
\]
Consequently,
\[
  (Q*Q)(z)
  \le
  C\frac{\Lambda(|z|)}{(1+|z|)^2}.
\]

It follows from polar coordinates that, for \(t\ge4\),
\begin{align*}
  \int_{\R^2}p_{2t}(z)(Q*Q)(z)\dd z
  &\le
  \frac Ct\int_0^\infty
  e^{-r^2/(4t)}
  \frac{\Lambda(r)r}{(1+r)^2}\dd r\\
  &\le
  \frac Ct\left(
    1+\int_2^{\sqrt t}\frac{\log r}{r}\dd r
    +\int_{\sqrt t}^{\infty}
      e^{-r^2/(4t)}\frac{\Lambda(r)}r\dd r
  \right)\\
  &\le
  C\frac{\Lambda(t)^2}{t}.
\end{align*}
Indeed, the middle integral is \(O((\log t)^2)\), while the change of
variables \(r=\sqrt t\,s\) shows that the last one is
\(O(\Lambda(t))\). The remaining range \(2\le t\le4\) is absorbed by
increasing the constant. Thus
\begin{equation}
  \int_{\R^2}p_{2t}(z)(Q*Q)(z)\dd z
  \le C\frac{\Lambda(t)^2}{t},
  \qquad t\ge2.
  \label{eq:critical-pQQ}
\end{equation}

\subsection{Construction of the detector and one-block contraction}
\label{subsec:critical-detector}

We use the isonormal realization
\eqref{eq:e-x-inner-product}--\eqref{eq:H-as-isonormal}. Fix
\(a\in(0,1)\), \(R>4\), and, for large \(T\), set
\[
  u:=T^a,
  \qquad
  q:=\left\lceil(\log\log T)^2\right\rceil,
  \qquad
  \underline r_q:=8q,
  \qquad
  \mathcal C_T:=\sqrt T[-R,R+1]^2.
\]
Define
\begin{equation}
  \Delta_{q,T}^{\mathrm{cr}}
  :=
  \left\{
    (t_0,\ldots,t_q):
    0<t_0<T/4,\quad
    \underline r_q<t_j-t_{j-1}<u,\ 1\le j\le q
  \right\}.
  \label{eq:critical-time-simplex}
\end{equation}
For \(\mathbf t\in\Delta_{q,T}^{\mathrm{cr}}\), put
\(r_j=t_j-t_{j-1}\) and
\[
  F_{\mathbf t}^{\mathrm{cr}}
  :=
  \int_{\mathcal C_T\times(\R^2)^q}
  \left(\prod_{j=1}^q p_{r_j}(x_j-x_{j-1})\right)
  e_{x_0}\otimes\cdots\otimes e_{x_q}\dd\mathbf x.
\]
With \(\mathcal I_{q+1}\) denoting the ordered Wiener integral, set
\begin{equation}
  (\sigma_{q,T}^{\mathrm{cr}})^2
  :=
  \int_{\Delta_{q,T}^{\mathrm{cr}}}
  \|F_{\mathbf t}^{\mathrm{cr}}\|^2\dd\mathbf t,
  \qquad
  X_{q,T}^{\mathrm{cr}}
  :=
  \frac{\mathcal I_{q+1}(F^{\mathrm{cr}})}
       {\sigma_{q,T}^{\mathrm{cr}}}.
  \label{eq:critical-detector-definition}
\end{equation}
Then
\[
  \E[X_{q,T}^{\mathrm{cr}}]=0,
  \qquad
  \Var(X_{q,T}^{\mathrm{cr}})=1.
\]
For a fixed path \(B\), write
\[
  \mu_{q,T}^{\mathrm{cr}}(B)
  :=\E[X_{q,T}^{\mathrm{cr}}(W+\beta h_B)],
  \qquad
  v_{q,T}^{\mathrm{cr}}(B)
  :=\Var(X_{q,T}^{\mathrm{cr}}(W+\beta h_B)).
\]

The following estimates are the analogue of
Lemma~\ref{lem:detector-estimates}. They are needed for the one-block contraction. Their proof is postponed to Subsection~\ref{subsec:critical-detector-estimates-proof}.

\begin{lemma}
\label{lem:critical-detector-estimates}
There are \(0<c<C<\infty\), independent of \(q,T\), such that
\begin{equation}
  c^{q+1}T^2\Lambda(u)^{2q+1}
  \le(\sigma_{q,T}^{\mathrm{cr}})^2
  \le C^{q+1}T^2\Lambda(u)^{2q+1}.
  \label{eq:critical-sigma-bounds}
\end{equation}
For every \(\delta>0\), \(R\) may be chosen so that, uniformly in
\(x\in I_0^T\),
\begin{equation}
  P_x\left(
    \mu_{q,T}^{\mathrm{cr}}(B)
    \ge
    \beta\sqrt{\Lambda(u)}
    \big(c\beta\Lambda(u)\big)^q
  \right)
  \ge1-\delta-o_T(1).
  \label{eq:critical-planted-mean-lower}
\end{equation}
Finally,
\begin{equation}
  \sup_Bv_{q,T}^{\mathrm{cr}}(B)
  \le
  1+e^{Cq\log(q+1)}\frac{qu}{T}
  \sum_{r=1}^q\beta^{2r}\Lambda(u)^{2r+3}.
  \label{eq:critical-shifted-variance}
\end{equation}
\end{lemma}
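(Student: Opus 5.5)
The proof follows the three-part structure of Lemma~\ref{lem:detector-estimates}. The single new input is that $Q$ is replaced by the non-integrable kernel with $\vartheta=2$, so every occurrence of $\norm{Q}_1$ or of \eqref{eq:uniform-pQ} is replaced by \eqref{eq:critical-pQ} and \eqref{eq:critical-pQQ}.

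\textbf{Step 1: the variance bound \eqref{eq:critical-sigma-bounds}.} Expand $\norm{F^{\mathrm{cr}}_{\mathbf t}}^2$ exactly as in \eqref{eq:kernel-square-relative}, using the relative coordinates $y_j=x_j-x_j'$:
\[
\int A_T(y_0)Q(y_0)\prod_{j=1}^q p_{2r_j}(y_j-y_{j-1})Q(y_j)\,\dd\mathbf y .
\]
Since $Q\notin L^1$, the factor $A_T(y_0)Q(y_0)$ is no longer bounded by $CT$ times an integrable weight.
\begin{enumerate}
\item For the upper bound, integrate $y_q,\dots,y_1$ successively with \eqref{eq:critical-pQ}. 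Each step produces $C\Lambda(r_j)/(1+r_j)$.
\item The remaining term is $\int A_T(y_0)Q(y_0)\,\dd y_0\le CT\int_{|y|\le C\sqrt T}(1+|y|)^{-2}\,\dd y\le CT\Lambda(T)$, and $\Lambda(T)\asymp\Lambda(u)$ because $u=T^a$.
\item For the lower bound, restrict $y_j$ to $|y_j|\le\sqrt{r_j}$-type annuli, or more simply to $D_{L}$-scale balls growing with the gaps. Each factor $\int_{|y|\le\sqrt r}Q\,p_{2r}$ then gives $c\Lambda(r)/r$.
\item The $y_0$ integral gives $cT\Lambda(T)$.
\end{enumerate}
Integrating over $\Delta^{\mathrm{cr}}_{q,T}$ yields $\int_{\underline r_q}^u \Lambda(r)/r\,\dd r\asymp\Lambda(u)^2$ per gap. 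Together with the extra $\Lambda$ from $y_0$ and the factor $T$ from $t_0$, this gives $T^2\Lambda(u)^{2q+1}$. The lower bound requires some care, because the heat-kernel bound \eqref{heat-kernel-density-lower-bound} must now be applied with $|y_j-y_{j-1}|\lesssim\sqrt{r_j}$ rather than with fixed $L$. I would restrict to $|y_j|\le\sqrt{r_{j}\wedge r_{j+1}}/q$ or a similar condition, and accept constants of the form $c^{q+1}$.

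\textbf{Step 2: the planted mean \eqref{eq:critical-planted-mean-lower}.} Write $\mu^{\mathrm{cr}}=\beta^{q+1}\mathcal M^{\mathrm{cr}}/\sigma^{\mathrm{cr}}$. As before, work on $\mathcal G_{R,T}$, substitute $x_j=B_{t_j}+y_j$, and average the Brownian increments to get $p_{2r_j}(y_j-y_{j-1})$. The same computation as in Step 1, now without the $A_T$ weight, gives $P_x[\Phi_s]\ge c^{q+1}\Lambda(u)^{2q+1}$, where the $y_0$ integral over the corridor supplies one $\Lambda$. The argument is then:
\begin{enumerate}
\item Bound $\Phi_s$ above by $C^{q+1}\Lambda(u)^{2q+1}$, using the corridor to cut off $y_0$.
\item Use the $qu$-range decorrelation and Chebyshev's inequality, exactly as in the derivation of \eqref{concentration-inequality-Z_T}, to get $\mathcal M^{\mathrm{cr}}\ge c^{q+1}T\Lambda(u)^{2q+1}$ with probability at least $1-\delta-o_T(1)$.
\item Divide by $\sigma^{\mathrm{cr}}\le C^{(q+1)/2}T\Lambda(u)^{q+1/2}$. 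This gives $\beta^{q+1}c^{q}\Lambda(u)^{q+1/2}$, which is the stated form.
\end{enumerate}

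\textbf{Step 3: the shifted variance \eqref{eq:critical-shifted-variance}. This is the main obstacle.} The chaos expansion \eqref{eq:shifted-variance-contractions} and the contraction integral \eqref{eq:fixed-contraction-integral} carry over verbatim.
\begin{enumerate}
\item Delete path factors with the bridge insertion. Each deletion now costs $\int \Lambda(\tau)/(1+\tau)\,\dd t\le C\Lambda(u)^2$ instead of $L_T$.
\item The anchor step of Lemma~\ref{lem:integrable-anchor-chain} fails, since $\int Q(z-B_{\widehat t})\,\dd z=\infty$. Following the remark after Lemma~\ref{lem:detector-estimates}, I would keep the corridor indicators $\1_{\{x_0\in\mathcal C_T\}}\1_{\{x_0'\in\mathcal C_T\}}$.
\item Contract the chains to relative coordinates as before. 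Propagating along the heat kernels to the anchor vertex then confines the anchor $z$ to a region of radius $O(\sqrt T)$, so the $z$ integral costs $\int_{|z|\lesssim\sqrt T}Q\le C\Lambda(T)$.
\item The pair $Q(x_0-x_0')$ integrated against the localized $y_0$, via \eqref{eq:critical-pQQ} in the form $\int p_{2t}(Q*Q)$, produces a further $\Lambda^2/t$. Each ordinary gap contributes $\int_0^{qu}\Lambda(d)/(1+d)\,\dd d\le C\Lambda(u)^2$.
\end{enumerate}
Bookkeeping should give
\[
\mathcal D_{A,A'}\le C^{q+1}T\,qu\,\Lambda(u)^{2(q-k)+2k+2k+3}
\]
up to adjustments. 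Dividing by the lower bound $T^2\Lambda(u)^{2q+1}$ and summing over the $\binom{q+1}{k}^2$ pairs then gives the exponent $2r+3$.

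The delicate point is justifying that removing the corridor and anchor costs only the extra $\Lambda(u)^{O(1)}$, and not a power of $T$. I would attempt this by first integrating the anchor against the chain heat kernels. This should be done via the bound $\int\1_{\mathcal C_T}(x_0)p_\alpha(z-x_0)Q(z-B)\,\dd z\,\dd x_0\le C\Lambda(T)\,|\mathcal C_T|^{0}$, using radial monotonicity of $Q$ as in the proof of \eqref{eq:critical-pQ}.
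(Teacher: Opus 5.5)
Steps 1 and 2 are fine and essentially follow the paper. In the Step 1 upper bound you cut off $y_0$ through the support of $A_T$, which gives $CT\Lambda(T)$; this is a valid alternative to the paper's joint $(y_0,y_1)$ bound \eqref{PtwoQ}. The gap is in Step 3.

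\textbf{The gap.} You carry over the integrable-case recipe: keep only the anchor, delete every other path factor in both chains, and let the corridor indicators do the localizing that $\norm{Q}_1$ used to do. This breaks when $0\in A'$. In that case the second chain's original left endpoint $\bar x'_0$ is itself a path-factor vertex. Deleting it with the one-sided estimate $\sup_x(p_a*Q)(x)\le C\lambda(a)$ integrates $\bar x'_0$ over all of $\R^2$, and its indicator $\1_{\mathcal C_T}(\bar x'_0)$ is lost with it. The second chain is then not localized at all.

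If in addition $|A|=|A'|=q$, so that $m=q-r=0$ and there is a single Gaussian pair, the residual contains
$\int_{\R^2}Q(x_0-x'_0)\,\dd x'_0=\infty$ for $\vartheta=2$. Nothing in your plan repairs this:
\begin{itemize}
\item the first-chain corridor cannot confine $x'_0$;
\item any $\sqrt T$-localization of $y_0=x_0-x'_0$ came from the indicator you just destroyed;
\item \eqref{eq:critical-pQQ} needs a second relative link $Q(y_1)$, which does not exist here.
\end{itemize}
For $m\ge1$, a joint bound $\sup_c\,(p_{d_1}*Q*Q)(c)\lesssim\Lambda(d_1)^2/(1+d_1)$ could rescue the deletion. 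However, the $r=q$ term in \eqref{eq:critical-shifted-variance} must be controlled as well.

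\textbf{The missing idea.} The paper retains a second path factor. When $0\in A'$, keep $Q(\bar x'_0-B_{t'_0})$ together with $\1_{\mathcal C_T}(\bar x'_0)$ (the $\varepsilon'=1$ case), and delete only the remaining path factors. Left-endpoint factors are never deleted in the critical case.

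The paper also does not contract to relative coordinates. Instead:
\begin{itemize}
\item it integrates each $x'_\ell$, $\ell\ge1$, against its link using $\sup(p_{d_\ell}*Q)\le C\lambda(d_\ell)$;
\item it telescopes the first chain onto $p_{|\widehat t-s_0|}(z-x_0)$.
\end{itemize}
The left-endpoint block that remains collapses through the identity
$\int p_\rho(x_0-z)Q(x_0-x'_0)p_{a'}(x'_0-\bar x'_0)\,\dd x_0\,\dd x'_0=(p_{\rho+a'}*Q)(z-\bar x'_0)$, or through $(p_{a'}*Q)(x_0-\bar x'_0)$ when the anchor is not at the left endpoint. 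It is then bounded by three ingredients:
\begin{itemize}
\item two corridor masses, each $\sup_v\int_{v+\mathcal C_T}Q\le C\Lambda(T)$;
\item the time integral $\int_{I'}\lambda\le C\Lambda(2qu)^2$.
\end{itemize}
You correctly identified the corridor-mass bound for the anchor. Without this separate treatment of the second chain's left endpoint, however, your Step 3 bookkeeping does not close.
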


We complete the upper bound before proving the lemma. Choose
\begin{equation}
  T=T_\beta:=\exp\left\{\frac{A}{\beta}\right\},
  \qquad u:=T^a,
  \qquad q:=\left\lceil(\log\log T)^2\right\rceil.
  \label{eq:critical-block-scale}
\end{equation}
Then \(\beta\log u=aA\). Taking \(A\) so large that
\(caA>1\), the lower bound in
\eqref{eq:critical-planted-mean-lower} satisfies
\begin{equation}
  \beta\sqrt{\Lambda(u)}
  \big(c\beta\Lambda(u)\big)^q\longrightarrow\infty.
  \label{eq:critical-mean-diverges}
\end{equation}
Furthermore,
\begin{align*}
  \sum_{r=1}^q\beta^{2r}\Lambda(u)^{2r+3}
  &\le C\beta^{-3}q\max\{1,(C\beta\Lambda(u))^{2q}\},\\
  \frac{u}{T}
  &=\exp\left\{-\frac{(1-a)A}{\beta}\right\},\\
  q\log(q+1)&=o(\beta^{-1}).
\end{align*}
Hence \eqref{eq:critical-shifted-variance} gives
\begin{equation}
  \sup_Bv_{q,T}^{\mathrm{cr}}(B)\le1+o_\beta(1).
  \label{eq:critical-variance-bounded}
\end{equation}

Fix \(K>1\) and use the penalty from
\eqref{eq:abstract-threshold-penalty}:
\[
  g_\beta:=\mathfrak g_K(X_{q,T}^{\mathrm{cr}})
  =\exp\left\{-K\1_{\{X_{q,T}^{\mathrm{cr}}>e^{K^2}\}}\right\}.
\]
Since the detector is centered with variance one,
\[
  C_K:=\E[g_\beta^{-1}]
  \le1+(e^K-1)e^{-2K^2}\le2
\]
for \(K\) sufficiently large. By \eqref{eq:path-size-bias},
\[
  a_\beta(z)
  :=a_{T,g_\beta}(z)
  =\sup_{x\in I_0^T}P_x\left[
    \1_{\{B_T\in I_z^T\}}
    \E\big[g_\beta(W+\beta h_B)\big]
  \right].
\]

\begin{proposition}[one-block contraction]
\label{prop:critical-one-block-contraction}
The constants \(K,R,A\) can be chosen independently of \(\beta\) so
that, for some \(\rho<1\),
\begin{equation}
  C_K^{1/2}\sum_{z\in\Z^2}a_\beta(z)^{1/2}
  \le\rho
  \label{eq:critical-one-block-rho}
\end{equation}
for all sufficiently small \(\beta\).
\end{proposition}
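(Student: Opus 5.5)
The plan mirrors the proof of Proposition~\ref{eq:integrable-one-block-rho}, with Lemma~\ref{lem:critical-detector-estimates} taking the place of Lemma~\ref{lem:detector-estimates} and the scales fixed by \eqref{eq:critical-block-scale}. We split
\[
  C_K^{1/2}\sum_{z\in\Z^2}a_\beta(z)^{1/2}
  =C_K^{1/2}\sum_{|z|_\infty\le M}a_\beta(z)^{1/2}
  +C_K^{1/2}\sum_{|z|_\infty>M}a_\beta(z)^{1/2}.
\]
\textbf{Far boxes.} Since \(0<g_\beta\le1\), we have \(a_\beta(z)\le\sup_{x\in I_0^T}P_x(B_T\in I_z^T)\). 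Because \(C_K\le2\), Brownian scaling and Gaussian tails let us choose \(M\) so that the far-box sum is below \(1/4\) uniformly in \(T\ge1\), exactly as in \eqref{eq:far-box-contribution}. The choice of \(M\) does not depend on \(\beta\), \(K\), \(R\) or \(A\).

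\textbf{Near boxes.} For \(|z|_\infty\le M\) we drop the endpoint indicator, which gives
\[
  a_\beta(z)\le\sup_{x\in I_0^T}P_x\bigl[\E[g_\beta(W+\beta h_B)]\bigr].
\]
Given \(\delta>0\), we pick \(R\) as in \eqref{eq:critical-planted-mean-lower} and then \(A\) with \(caA>1\). Since \(\beta\Lambda(u)\ge\beta\log u=aA\), the threshold \(m_\beta:=\beta\sqrt{\Lambda(u)}(c\beta\Lambda(u))^q\) satisfies \(m_\beta\ge\beta\sqrt{\Lambda(u)}\,(caA)^q\).

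We need \(m_\beta\to\infty\), as asserted in \eqref{eq:critical-mean-diverges}. This holds because \(\beta\sqrt{\Lambda(u)}\asymp\sqrt{aA\beta}\) decays only polynomially in \(\beta\), whereas \((caA)^q\) grows like \(\exp\{c'(\log\log T)^2\}\) with \(\log T=A/\beta\), which beats any power of \(\beta^{-1}\).

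On the event \(\{\mu^{\mathrm{cr}}_{q,T}(B)\ge m_\beta\}\), \eqref{eq:critical-variance-bounded} gives \(v^{\mathrm{cr}}_{q,T}(B)\le2\) for small \(\beta\). Chebyshev's inequality then yields
\[
  \Pp\bigl(X^{\mathrm{cr}}_{q,T}(W+\beta h_B)\le e^{K^2}\bigr)\le\frac{2}{(m_\beta-e^{K^2})^2}=o_\beta(1).
\]
Writing \(\E[g_\beta(W+\beta h_B)]\le e^{-K}+\Pp(X^{\mathrm{cr}}_{q,T}(W+\beta h_B)\le e^{K^2})\) and averaging over \(B\) under \(P_x\), we obtain
\[
  \sup_{x\in I_0^T}P_x\bigl[\E[g_\beta(W+\beta h_B)]\bigr]\le e^{-K}+\delta+o_\beta(1).
\]
Here \(o_T(1)=o_\beta(1)\), because \(T=T_\beta\to\infty\) as \(\beta\downarrow0\).

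\textbf{Choice of constants.} Take \(K\) large, so that also \(C_K\le2\), and \(\delta\) small, so that
\[
  \sqrt2\,\#\{|z|_\infty\le M\}\,(e^{-K}+2\delta)^{1/2}<\tfrac14 .
\]
Then fix \(R\) and \(A\) accordingly. For all sufficiently small \(\beta\) the total is below \(1/2=:\rho\).

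\textbf{Main obstacle.} The delicate step is checking that the variance bound \eqref{eq:critical-shifted-variance} really gives \(o_\beta(1)\) while \(A\) is large. The factor \((C\beta\Lambda(u))^{2q}\le (C'aA)^{2q}=e^{O(q)}\) and the factor \(e^{Cq\log(q+1)}\) are both \(e^{o(1/\beta)}\), since \(q\sim(\log(A/\beta))^2\). The polynomial factor \(\beta^{-3}q\) is likewise negligible. All of these are dominated by \(qu/T=q\,e^{-(1-a)A/\beta}\). This is precisely the computation displayed before \eqref{eq:critical-variance-bounded}, and with it the argument closes. The ordering of choices matters here: \(M\), then \(K,\delta\), then \(R\), then \(A\), and only afterwards \(\beta\downarrow0\). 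This ordering is what keeps all constants independent of \(\beta\).
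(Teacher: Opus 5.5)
Your proposal is correct and follows the paper's proof essentially step by step. It uses the same far/near box split with $M$ chosen from Gaussian tails, the same reduction of the near boxes to $\sup_{x\in I_0^T}P_x[\E[g_\beta(W+\beta h_B)]]$ controlled via \eqref{eq:critical-planted-mean-lower}, \eqref{eq:critical-variance-bounded} and Chebyshev, and the same order of choices ($M$, then $K,\delta$, then $R$, then $A$, then $\beta\downarrow0$). Your explicit justification of \eqref{eq:critical-mean-diverges} fills in a step the paper only asserts: the factor $(caA)^q=\exp\{q\log(caA)\}$ with $q=\lceil(\log(A/\beta))^2\rceil$ outgrows the $\sqrt{aA\beta}$ prefactor.
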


\begin{proof}
This is the same split as in the proof of
Proposition~\ref{eq:integrable-one-block-rho}. For all sufficiently large
\(K\), the penalty cost satisfies \(C_K\le2\). Since \(0<g_\beta\le1\),
Brownian scaling and Gaussian tails give
\[
  \lim_{M\to\infty}\sup_{T\ge1}
  \sum_{|z|_\infty>M}
  \left(\sup_{x\in I_0^T}P_x(B_T\in I_z^T)\right)^{1/2}=0.
\]
Choose \(M\) so large that the contribution of \(|z|_\infty>M\) to the
left-hand side of \eqref{eq:critical-one-block-rho} is less than \(1/4\),
uniformly in \(\beta\).

There are now only finitely many boxes. For \(|z|_\infty\le M\), we
discard the endpoint indicator in the definition of \(a_\beta(z)\) and
obtain
\begin{equation}
  a_\beta(z)
  \le
  \sup_{x\in I_0^T}
  P_x\left[
    \E[g_\beta(W+\beta h_B)]
  \right].
  \label{eq:critical-finite-box-reduction}
\end{equation}
We estimate the right-hand side uniformly in the entrance point.

Fix \(K>1\) and \(\delta>0\). Choose \(R\) in
\eqref{eq:critical-planted-mean-lower} for this value of \(\delta\), and
then increase \(A\), if necessary, so that
\eqref{eq:critical-mean-diverges} holds. Since \(T=T_\beta\to\infty\),
\eqref{eq:critical-planted-mean-lower} gives
\begin{equation}
  \sup_{x\in I_0^T}
  P_x\left(
    \mu_{q,T}^{\mathrm{cr}}(B)
    <
    \beta\sqrt{\Lambda(u)}
    \bigl(c\beta\Lambda(u)\bigr)^q
  \right)
  \le \delta+o_\beta(1).
  \label{eq:critical-small-mean-probability}
\end{equation}
Moreover, by \eqref{eq:critical-mean-diverges},
\[
  \beta\sqrt{\Lambda(u)}
  \bigl(c\beta\Lambda(u)\bigr)^q
  >e^{K^2}
\]
for all sufficiently small \(\beta\).

Now fix a path \(B\) such that
\[
  \mu_{q,T}^{\mathrm{cr}}(B)
  \ge
  \beta\sqrt{\Lambda(u)}
  \bigl(c\beta\Lambda(u)\bigr)^q.
\]
For sufficiently small \(\beta\), the variance bound
\eqref{eq:critical-variance-bounded} gives
\(v_{q,T}^{\mathrm{cr}}(B)\le2\). Hence Chebyshev's inequality yields
\begin{align}
  &\Pp\left(
    X_{q,T}^{\mathrm{cr}}(W+\beta h_B)\le e^{K^2}
  \right) \le
  \frac{2}{
    \left[
      \beta\sqrt{\Lambda(u)}
      (c\beta\Lambda(u))^q-e^{K^2}
    \right]^2}
  =o_\beta(1),
  \label{eq:critical-detector-lower-tail}
\end{align}
uniformly over all such paths. By the definition of \(g_\beta\), for every fixed \(B\),
\begin{align*}
  \E[g_\beta(W+\beta h_B)]
  &=
  e^{-K}\,
  \Pp\left(
    X_{q,T}^{\mathrm{cr}}(W+\beta h_B)>e^{K^2}
  \right)\\
  &\quad+
  \Pp\left(
    X_{q,T}^{\mathrm{cr}}(W+\beta h_B)\le e^{K^2}
  \right)\\
  &\le
  e^{-K}
  +
  \Pp\left(
    X_{q,T}^{\mathrm{cr}}(W+\beta h_B)\le e^{K^2}
  \right).
\end{align*}
Using
\eqref{eq:critical-small-mean-probability} and
\eqref{eq:critical-detector-lower-tail} gives
\begin{align}
  &\sup_{x\in I_0^T}
  P_x\left[
    \E[g_\beta(W+\beta h_B)]
  \right] \notag\\
  &\qquad\le
  e^{-K}
  +
  \sup_{x\in I_0^T}
  P_x\left(
    \mu_{q,T}^{\mathrm{cr}}(B)
    <
    \beta\sqrt{\Lambda(u)}
    (c\beta\Lambda(u))^q
  \right)
  +o_\beta(1) \notag\\
  &\qquad\le e^{-K}+\delta+o_\beta(1).
  \label{eq:critical-penalty-expectation}
\end{align}

We now choose \(K\) large and then \(\delta>0\) small enough that
\[
  \sqrt2\,\#\{z:|z|_\infty\le M\}
  (e^{-K}+2\delta)^{1/2}<\frac14,
\]
and retain the corresponding choices of \(R\) and \(A\) above. Since
\(C_K\le2\), equations \eqref{eq:critical-finite-box-reduction} and
\eqref{eq:critical-penalty-expectation} imply, for all sufficiently
small \(\beta\),
\begin{align*}
  C_K^{1/2}
  \sum_{|z|_\infty\le M}a_\beta(z)^{1/2}
  &\le
  \sqrt2\,\#\{z:|z|_\infty\le M\}
  \bigl(e^{-K}+\delta+o_\beta(1)\bigr)^{1/2}\\
  &\le
  \sqrt2\,\#\{z:|z|_\infty\le M\}
  (e^{-K}+2\delta)^{1/2}
  <\frac14.
\end{align*}
The \(|z|_{\infty}>M\) contribution is also less than \(1/4\). Combining the two
bounds proves \eqref{eq:critical-one-block-rho}.
\end{proof}

Hence Lemma~\ref{lem:integrable-fractional-reduction}, applied with
\(T=T_\beta\) and \(g=g_\beta\), now gives
\[
  \E[W_{mT_\beta}(\beta)^{1/2}]\le\rho^m,
  \qquad m\ge1,
\]
and hence
\begin{equation}
  p(\beta)
  \le\frac{2\log\rho}{T_\beta}
  \le-\exp\left\{-\frac{C}{\beta}\right\}.
  \label{eq:critical-free-energy-upper}
\end{equation}
This proves the upper bound in \eqref{eq:two-d-critical-free-energy}. It
remains to justify Lemma~\ref{lem:critical-detector-estimates}.

\subsection{Proof of Lemma~\ref{lem:critical-detector-estimates}}
\label{subsec:critical-detector-estimates-proof}

\paragraph{Proof of \eqref{eq:critical-sigma-bounds}.}
For \(y\in\R^2\), let
\[
  A_T(y):=|\mathcal C_T\cap(\mathcal C_T+y)|.
\]
Similar to \eqref{eq:kernel-square-relative}, we obtain
\begin{equation}
  \|F_{\mathbf t}^{\mathrm{cr}}\|^2
  =
  \int_{(\R^2)^{q+1}}
  A_T(y_0)Q(y_0)
  \prod_{j=1}^q
  p_{2r_j}(y_j-y_{j-1})Q(y_j)\dd\mathbf y.
  \label{eq:critical-kernel-square}
\end{equation}
By \eqref{eq:critical-pQ},
\begin{equation}
  \sup_x\int_{\underline r_q}^{u}\int_{\R^2}
  p_{2r}(x-y)Q(y)\,dy\,dr
  \le C\Lambda(u)^2,
  \label{onePQ}
\end{equation}
and \(r> \underline{r}_q\ge2\), \eqref{eq:critical-pQQ} gives
\begin{equation}
  \int_{\underline r_q}^{u}\iint_{(\R^2)^2}
  Q(y_0)p_{2r}(y_1-y_0)Q(y_1)
  \,dy_0\,dy_1\,dr
  \le C\Lambda(u)^3,
  \label{PtwoQ}
\end{equation}
Since \(A_T\le CT\), integration over \(t_0\in(0,T/4)\), followed by
successive application of \eqref{onePQ} in \(y_q,\ldots,y_2\) and \eqref{PtwoQ} in \(y_1,y_0\), yields
\[
  (\sigma_{q,T}^{\mathrm{cr}})^2
  \le C^{q+1}T^2
  \Lambda(u)^3\Lambda(u)^{2(q-1)}
  =C^{q+1}T^2\Lambda(u)^{2q+1}.
\]

For the reverse bound, let
\[
  A_u:=\{|y|\le u^{1/4}\}.
\]
By the lower bound in \eqref{eq:radial-tail-class} and polar coordinates,
for all sufficiently large \(u\),
\[
  \int_{A_u}Q(y)\dd y
  \ge
  c\int_1^{u^{1/4}}\frac{\dd r}{r}
  \ge c\Lambda(u).
\]
Moreover, if \(x,y\in A_u\) and \(r\in[u^{1/2},u]\), then
\[
  |x-y|^2\le4u^{1/2}\le4r.
\]
Consequently,
\[
  p_{2r}(x-y)
  =
  \frac{1}{4\pi r}
  \exp\left\{-\frac{|x-y|^2}{4r}\right\}
  \ge\frac cr,
\]
and hence
\[
  \int_{u^{1/2}}^u p_{2r}(x-y)\dd r
  \ge
  c\int_{u^{1/2}}^u\frac{\dd r}{r}
  \ge c\Lambda(u).
\]

Since \(u^{1/4}=o(\sqrt T)\), the overlap of
\(\mathcal C_T\) and \(\mathcal C_T+y\) has area at least \(cT\),
uniformly in \(y\in A_u\). Thus
\[
  \inf_{y\in A_u}A_T(y)\ge[(2R+1)\sqrt{T}-u^{\frac{1}{4}}]^2 \ge cT
\]
for all sufficiently large \(T\). Also,
\(\underline r_q\le u^{1/2}\) for large \(T\), so the restrictions
\(y_j\in A_u\) and \(r_j\in[u^{1/2},u]\) are contained in the original
integration domain. So we obtain
\begin{align}
  (\sigma_{q,T}^{\mathrm{cr}})^2
  &\ge
  cT^2
  \int_{A_u^{q+1}}
  Q(y_0)
  \prod_{j=1}^q
  \left(
    \int_{u^{1/2}}^u
    p_{2r}(y_j-y_{j-1})\,\mathrm{d}r
  \right)
  Q(y_j)\,\mathrm{d}\mathbf{y} \nonumber \\
  &\ge
  cT^2
  \bigl(c\Lambda(u)\bigr)^q
  \left(\int_{A_u}Q(y)\,\mathrm{d}y\right)^{q+1} \nonumber \\
  &\ge
  c^{q+1}T^2\Lambda(u)^{2q+1}.
  \label{2q+1 lower bound}
\end{align}
This proves \eqref{eq:critical-sigma-bounds}.

\paragraph{Proof of \eqref{eq:critical-planted-mean-lower}.}
The Gaussian shift identity \eqref{eq:path-size-bias} gives
\begin{equation}
  \mu_{q,T}^{\mathrm{cr}}(B)
  =
  \frac{\beta^{q+1}}{\sigma_{q,T}^{\mathrm{cr}}}
  \mathcal M_{q,T}(B),
  \label{eq:critical-planted-mean-definition}
\end{equation}
where
\begin{equation*}
  \mathcal M_{q,T}(B)
  :=
  \int_{\Delta_{q,T}^{\mathrm{cr}}}
  \int_{\mathcal C_T\times(\R^2)^q}
  \prod_{j=1}^q p_{r_j}(x_j-x_{j-1})
  \prod_{j=0}^qQ(x_j-B_{t_j})
  \dd\mathbf x\dd\mathbf t.
\end{equation*}
Let
\[
  \mathcal G_{R,T}
  :=
  \left\{
    \sup_{0\le s\le T/2}|B_s-B_0|_\infty
    \le(R-2)\sqrt T
  \right\}.
\]
Similar to \eqref{restrction-corridor}, we have
\begin{equation}
  \lim_{R\to\infty}
  \sup_{T\ge1}\sup_{x\in I_0^T}
  P_x(\mathcal G_{R,T}^c)=0.
  \label{eq:critical-good-path-tail}
\end{equation}

On \(\mathcal G_{R,T}\), put
\[
  s:=t_0,
  \qquad
  r_j:=t_j-t_{j-1},
  \qquad
  x_j:=B_{t_j}+y_j.
\]
Since \(s<T/4\), \(qu<T/4\), and \(y_0\in A_u\),
the restriction
\[
  r_j\in[u^{1/2},u],
  \qquad
  y_j\in A_u,
  \qquad 0\le j\le q,
\]
is contained in the original domain for all sufficiently large \(T\).
Therefore
\begin{equation}
  \mathcal M_{q,T}(B)
  \ge\int_0^{T/4}\Phi_s(B)\dd s
  \qquad\text{on }\mathcal G_{R,T},
  \label{eq:critical-M-Phi}
\end{equation}
where
\begin{align*}
  \Phi_s(B)
  &:=
  \int_{[u^{1/2},u]^q}
  \int_{A_u^{q+1}}
  \prod_{j=1}^q
  p_{r_j}\left(
    B_{s+r_1+\cdots+r_j}
    -B_{s+r_1+\cdots+r_{j-1}}
    +y_j-y_{j-1}
  \right)
  \prod_{j=0}^qQ(y_j)
  \dd\mathbf y\dd\mathbf r.
\end{align*}
For fixed gaps, the Brownian increments in the product are independent
and have laws \(N(0,r_jI_2)\). Thus
\[
  E_x\left[
    p_{r_j}(B_{t_j}-B_{t_{j-1}}+y_j-y_{j-1})
  \right]
  =
  p_{2r_j}(y_j-y_{j-1}),
\]
and the same lower-bound calculation as \eqref{2q+1 lower bound} gives
\begin{equation}
  E_x[\Phi_s(B)]
  \ge c^{q+1}\Lambda(u)^{2q+1}.
  \label{eq:critical-Phi-mean}
\end{equation}
On the other hand,
\[
  p_r\le\frac1{2\pi r},
  \qquad
  \int_{A_u}Q(y)\dd y\le C\Lambda(u),
\]
so
\begin{equation}
  0\le\Phi_s(B)
  \le C^{q+1}\Lambda(u)^{2q+1}.
  \label{eq:critical-Phi-upper}
\end{equation}

Put \(Z_T:=\int_0^{T/4}\Phi_s(B)\dd s\). The variable
\(\Phi_s\) depends only on Brownian increments in \([s,s+qu]\);
hence
\[
  \Cov_{P_x}(\Phi_s,\Phi_{s'})=0
  \qquad\text{if }|s-s'|>qu.
\]
Using \eqref{eq:critical-Phi-upper},
\begin{align*}
  \Var_{P_x}(Z_T)
  &=
  \int_0^{T/4}\int_0^{T/4}
  \Cov_{P_x}(\Phi_s,\Phi_{s'})\dd s\dd s' \\
  &\le
  C^{q+1}Tqu\Lambda(u)^{4q+2}.
\end{align*}
By \eqref{eq:critical-Phi-mean},
\[
  E_x[Z_T]\ge c^{q+1}T\Lambda(u)^{2q+1}.
\]
Consequently,
\begin{equation*}
  P_x\left(
    Z_T<c^{q+1}T\Lambda(u)^{2q+1}
  \right)
  \le C^q\frac{qu}{T}
  =o_T(1),
\end{equation*}
uniformly in \(x\in I_0^T\). Similar to the proof of \eqref{eq:planted-mean-high-probability}, combining this estimate with
\eqref{eq:critical-good-path-tail},
\eqref{eq:critical-M-Phi}, and the upper bound in
\eqref{eq:critical-sigma-bounds} proves
\eqref{eq:critical-planted-mean-lower}.

\paragraph{Proof of \eqref{eq:critical-shifted-variance}.}
The argument follows the proof of \eqref{eq:shifted-chaos-variance},
with modifications. We now prove \eqref{eq:critical-shifted-variance}. 

Fix a path \(B\).
For \(A\subseteq J_q:=\{0,\ldots,q\}\), define
\[
  \dd\Xi_j^A(t_j,x_j;B)
  :=
  \begin{cases}
    Q(x_j-B_{t_j})\dd t_j, & j\in A,\\[1mm]
    W(\dd t_j,e_{x_j}), & j\notin A,
  \end{cases}
\]
and
\begin{align*}
  \mathcal Y_A(B)
  &:={}
  \int_{\Delta_{q,T}^{\mathrm{cr}}}
  \int_{\mathcal C_T\times(\R^2)^q}
  \left(\prod_{j=1}^q
    p_{t_j-t_{j-1}}(x_j-x_{j-1})\right)
  \prod_{j=0}^q\dd\Xi_j^A(t_j,x_j;B)
  \dd\mathbf x.
\end{align*}
Expanding the shifted integrators gives
\begin{equation}
  X_{q,T}^{\mathrm{cr}}(W+\beta h_B)
  =
  \frac1{\sigma_{q,T}^{\mathrm{cr}}}
  \sum_{A\subseteq J_q}
  \beta^{|A|}\mathcal Y_A(B).
  \label{eq:critical-shifted-expansion}
\end{equation}
If \(|A|=r\), then \(\mathcal Y_A(B)\) belongs to the
\((q+1-r)\)-th Wiener chaos. Hence different values of \(r\) are
orthogonal. The term \(r=q+1\) is deterministic, while the term
\(r=0\) has variance \((\sigma_{q,T}^{\mathrm{cr}})^2\). Thus
\begin{equation}
  v_{q,T}^{\mathrm{cr}}(B)
  =1+
  \frac1{(\sigma_{q,T}^{\mathrm{cr}})^2}
  \sum_{r=1}^q\beta^{2r}
  \sum_{\substack{A,A'\subseteq J_q\\|A|=|A'|=r}}
  \E[\mathcal Y_A(B)\mathcal Y_{A'}(B)].
  \label{eq:critical-variance-expansion}
\end{equation}

Fix \(A,A'\subseteq J_q\) with \(|A|=|A'|=r\), and write
\[
  A^c=\{a_0<\cdots<a_{q-r}\},
  \qquad
  (A')^c=\{a'_0<\cdots<a'_{q-r}\}.
\]
For \(\mathbf s=(s_0,\ldots,s_{q-r})\), let
\begin{equation}
  \mathcal T_A(\mathbf s)
  :=
  \left\{
    (t_j)_{j\in A}:
    \mathbf t\in\Delta_{q,T}^{\mathrm{cr}},
    \quad t_{a_\ell}=s_\ell,
    \ 0\le\ell\le q-r
  \right\},
  \label{time fiber}
\end{equation}
and define \(\mathcal T_{A'}(\mathbf s)\) analogously. The It\^o isometry pairs
\begin{equation}
  W(\dd t_{a_\ell},e_{x_{a_\ell}})
  \quad\text{with}\quad
  W(\dd t'_{a'_\ell},e_{x'_{a'_\ell}}),
  \qquad 0\le \ell\le q-r.
  \label{eq:critical-retained-gaussian-pairing}
\end{equation}
so that \(t_{a_\ell}=t'_{a'_\ell}=s_\ell\), and contributes
\(Q(x_{a_\ell}-x'_{a'_\ell})\). Therefore
\begin{align}
  \mathcal D_{A,A'}(B)
  &:={}
  \E[\mathcal Y_A(B)\mathcal Y_{A'}(B)]
  \notag\\
  &=
  \int_{0<s_0<\cdots<s_{q-r}<T}
  \int_{\mathcal T_A(\mathbf s)}
  \int_{\mathcal T_{A'}(\mathbf s)}
  \int_{(\R^2)^{2(q+1)}}
  K_{\mathbf t}(\mathbf x)K_{\mathbf t'}(\mathbf x')
  \notag\\
  &\quad\times
  \prod_{j\in A}Q(x_j-B_{t_j})
  \prod_{j\in A'}Q(x'_j-B_{t'_j})
  \prod_{\ell=0}^{q-r}
  Q(x_{a_\ell}-x'_{a'_\ell})
  \notag\\
  &\quad\times
  \dd\mathbf x\dd\mathbf x'
  \dd\mathbf t_A\dd\mathbf t'_{A'}\dd\mathbf s,
  \label{eq:critical-fixed-contraction-integral}
\end{align}
where
\[
  K_{\mathbf t}(\mathbf x)
  :=
  \1_{\{x_0\in\mathcal C_T\}}
  \prod_{j=1}^q
  p_{t_j-t_{j-1}}(x_j-x_{j-1}).
\]

We now estimate the integral in
\eqref{eq:critical-fixed-contraction-integral}. Write
\(\varepsilon':=\1_{\{0\in A'\}}\). Since \(r\ge1\), the set \(A\)
is nonempty. Choose
\[
  j_*\in A,
  \qquad j_*=0\ \text{if }0\in A,
  \qquad
  \widehat t:=t_{j_*},
  \qquad z:=x_{j_*},
\]
and retain
\[
  Q(z-B_{\widehat t})=Q(x_{j_*}-B_{t_{j_*}}).
\]
This is the single retained path factor in the first chain;
we call it the \emph{anchor}. Every other path factor in that chain is
deleted. In the second chain we also retain its original
left-endpoint path factor \(Q(\bar x'_0-B_{t'_0})\) when
\(\varepsilon'=1\), and delete its other path factors. The two original
variables (denoted \(\bar x_0,\bar x'_0\) below) are always kept in
their original positions, together with their indicators of
\(\mathcal C_T\). Throughout
this operation, the two indicators of \(\mathcal C_T\) remain attached
to the original variables \(\bar x_0,\bar x'_0\). At most \(2r-1\)
path factors are deleted.
The left-endpoint cases are included in this convention: when
\(0\in A\), the anchor is the first-chain left-endpoint factor, and,
when \(0\in A'\), the second-chain left-endpoint factor is the optional
factor just described. A path factor at an original right endpoint may
be deleted and is handled by the one-sided estimate below.

We use the following consequence of \eqref{eq:critical-pQ}. For
\(a,b>0\), \(x,y,c\in\R^2\), and
\[
  \tau:=\frac{ab}{a+b},
\]
the Brownian bridge identity \eqref{eq:gaussian-bridge-identity} gives
\begin{equation}
  \int_{\R^2}
  p_a(z-x)Q(z-c)p_b(y-z)\dd z
  \le
  C\frac{\Lambda(\tau)}{1+\tau}
  p_{a+b}(y-x).
  \label{eq:critical-two-kernel-insertion}
\end{equation}

First consider a path factor \(Q(x_j-B_{t_j})\) strictly between two
remaining vertices in the first chain, and let
\(t_i<t_j<t_\ell\) be their times. By
\eqref{eq:critical-two-kernel-insertion},
\begin{align}
  &\int_{t_i}^{t_\ell}\int_{\R^2}
  p_{t_j-t_i}(x_j-x_i)
  Q(x_j-B_{t_j})
  p_{t_\ell-t_j}(x_\ell-x_j)
  \dd x_j\dd t_j
  \notag\\
  &\quad\le
  Cp_{t_\ell-t_i}(x_\ell-x_i)
  \int_{t_i}^{t_\ell}
  \frac{\Lambda(\tau_j)}{1+\tau_j}\dd t_j,
  \qquad
  \tau_j:=
  \frac{(t_j-t_i)(t_\ell-t_j)}
       {t_\ell-t_i}.
  \label{eq:critical-delete-path-factor}
\end{align}
Writing \(D:=t_\ell-t_i\) and \(s:=t_j-t_i\), we have
\[
  \frac{s}{2}\le\frac{s(D-s)}D\le s,
  \qquad 0<s\le\frac D2.
\]
Consequently,
\begin{align}
  \int_{t_i}^{t_\ell}
  \frac{\Lambda(\tau_j)}{1+\tau_j}\dd t_j
  &\le
  C\int_0^{D/2}\frac{\Lambda(s)}{1+s}\dd s
  \le C\Lambda(D)^2
  \le C\Lambda(2qu)^2.
  \label{eq:critical-insertion-time-cost}
\end{align}
The same calculation applies to an interior path factor in the second
chain. The selected anchor and, when present, the second-chain
left-endpoint factor are not deleted.

If a deleted factor is at the original right endpoint (so that it has
only one neighboring retained vertex), the one-sided estimate
\eqref{eq:critical-pQ} gives
\begin{align*}
  &\int_{\substack{t_i<t_j\\t_j-t_i\le qu}}
  \int_{\R^2}
  Q(x_j-B_{t_j})
  p_{t_j-t_i}(x_j-x_i)
  \dd x_j\dd t_j\\
  &\quad\le
  C\int_0^{qu}\frac{\Lambda(a)}{1+a}\dd a
  \le C\Lambda(2qu)^2.
\end{align*}
Consequently, after deleting
the path factors specified above, we have
\begin{equation}
  \mathcal D_{A,A'}(B)
  \le
  \bigl[C\Lambda(2qu)^2\bigr]^{2r-1}
  \mathscr R_{A,A'}^{\mathrm{cr}}(B).
  \label{eq:critical-path-reduction}
\end{equation}

To estimate the right-hand side of
\eqref{eq:critical-path-reduction}, we use the following lemma. Here
\(\mathscr R_{A,A'}^{\mathrm{cr}}(B)\) denotes the nonnegative residual
integral left after the stated path-factor deletions; its explicit
coordinate representation is given below.

\begin{lemma}
\label{lem:critical-anchor-chain}
Set \(m:=q-r\). Let \(0<s_0<\cdots<s_m<T\), and

\[
  d_\ell:=s_\ell-s_{\ell-1}\quad(1\le\ell\le m),
  \qquad s_m-s_0\le qu,
  \qquad
  \lambda(t):=\frac{\Lambda(t)}{1+t}.
\]
Thus \(d_\ell\) is the transition duration between the
\((\ell-1)\)-st and \(\ell\)-th Gaussian vertices in each chain. After
relabeling the paired Gaussian variables as \(x_\ell,x'_\ell\), the
Gaussian vertices are \((s_\ell,x_\ell)\) and \((s_\ell,x'_\ell)\),
arising from the pairing in
\eqref{eq:critical-retained-gaussian-pairing}. Recall the anchor
\(Q(z-B_{\widehat t})\) selected above. Its original index is \(j_*\),
so \(\widehat t=t_{j_*}\); the symbol \(z\) continues to denote its
original spatial coordinate and is distinct from the relabeled
Gaussian variables \(x_\ell,x'_\ell\). When \(j_*=0\), one has
\(z=\bar x_0\), the original left endpoint. Assume
\(0<\widehat t<T\) and

\[
  |\widehat t-s_0|\le qu,
\]
and write \(\rho:=|\widehat t-s_0|\). Since \(j_*\in A\), whereas the
first-chain Gaussian indices lie in \(A^c\), one has
\(\widehat t\ne s_\ell\) for every \(\ell\). Hence exactly one of the
following cases occurs (empty products below are one):

\[
  \begin{array}{lll}
    k=0: & \alpha=s_0-\widehat t>0,\\
    1\le k\le m: & 0<\alpha=\widehat t-s_{k-1}<d_k,\\
    k=m+1: & \alpha=\widehat t-s_m>0.
  \end{array}
\]

Accordingly, define
\begin{align}
  \Gamma^{(1)}_{0,\alpha}(z,\mathbf x)
  &:=\1_{\{z\in\mathcal C_T\}}\,p_\alpha(x_0-z)
    \prod_{\ell=1}^{m}p_{d_\ell}(x_\ell-x_{\ell-1}), \\
  \Gamma^{(1)}_{k,\alpha}(z,\mathbf x)
  &:=\1_{\{x_0\in\mathcal C_T\}}
    \prod_{\ell=1}^{k-1}p_{d_\ell}(x_\ell-x_{\ell-1})
    p_\alpha(z-x_{k-1}) \notag\\[-1mm]
  &\hspace{18mm}\times
    p_{d_k-\alpha}(x_k-z)
    \prod_{\ell=k+1}^{m}p_{d_\ell}(x_\ell-x_{\ell-1}),
    \qquad 1\le k\le m, \\
  \Gamma^{(1)}_{m+1,\alpha}(z,\mathbf x)
  &:=\1_{\{x_0\in\mathcal C_T\}}
    \prod_{\ell=1}^{m}p_{d_\ell}(x_\ell-x_{\ell-1})
    p_\alpha(z-x_m).
  \label{eq:critical-anchor-chain-kernels}
\end{align}

The second-chain kernels are

\begin{align}
  \Gamma^{(2)}_0(\mathbf x')
  &:=\1_{\{x'_0\in\mathcal C_T\}}
    \prod_{\ell=1}^{m}p_{d_\ell}(x'_\ell-x'_{\ell-1}), \\ 
  \Gamma^{(2)}_1(a',\bar x'_0,\mathbf x')
  &:=\1_{\{\bar x'_0\in\mathcal C_T\}}
    p_{a'}(x'_0-\bar x'_0)
    \prod_{\ell=1}^{m}p_{d_\ell}(x'_\ell-x'_{\ell-1}),
  \qquad 0<a'<qu.
  \label{eq:critical-anchor-second-kernels}
\end{align}

For \(\varepsilon'=0\), set

\[
\begin{aligned}
  \mathfrak a^{(0)}_{k,\alpha}(B)
  &:={}
  \int Q(z-B_{\widehat t})\Gamma^{(1)}_{k,\alpha}(z,\mathbf x)
  \Gamma^{(2)}_0(\mathbf x')
  \prod_{\ell=0}^{m}Q(x_\ell-x'_\ell)\,
  \dd z\,\dd\mathbf x\,\dd\mathbf x',
\end{aligned}
\]
and, for any interval \(I'\subset(0,qu)\) satisfying
\(0<s_0-a'<T/4\) for every \(a'\in I'\), set

\[
\begin{aligned}
  \mathfrak a^{(1)}_{k,\alpha}(B)
  &:={}
  \int_{I'}\int Q(z-B_{\widehat t})
  Q(\bar x'_0-B_{s_0-a'})
  \Gamma^{(1)}_{k,\alpha}(z,\mathbf x)
  \Gamma^{(2)}_1(a',\bar x'_0,\mathbf x')\\[-1mm]
  &\hspace{20mm}\times
  \prod_{\ell=0}^{m}Q(x_\ell-x'_\ell),
  \dd z\,\dd\mathbf x\,\dd\mathbf x'
  \,\dd\bar x'_0\,\dd a'.
\end{aligned}
\]

Then, uniformly in \(B\), \(k\), \(\alpha\), and \(I'\),

\begin{equation}
  \mathfrak a^{(\varepsilon')}_{k,\alpha}(B)
  \le
  C^{m+1}\Lambda(T)^2\Lambda(2qu)^4
  \prod_{\ell=1}^{m}\lambda(d_\ell),
  \qquad \varepsilon'\in\{0,1\}.
  \label{eq:critical-anchor-chain}
\end{equation}
\end{lemma}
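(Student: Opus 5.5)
The plan is to integrate the spatial variables of $\mathfrak a^{(\varepsilon')}_{k,\alpha}(B)$ in three stages, treating the cases $k=0$, $1\le k\le m$, $k=m+1$ in parallel.
\begin{enumerate}
\item Contract the two Gaussian chains from their far ends inward, using relative coordinates.
\item Integrate out the anchor with a single cost $\Lambda(T)$.
\item Pay the remaining $\Lambda(T)$ with the two corridor indicators.
\end{enumerate}
The difference from Lemma~\ref{lem:integrable-anchor-chain} is that $Q\notin L^1(\R^2)$. So the anchor integral $\int Q(z-B_{\widehat t})\,\dd z$ can no longer be done over all of $\R^2$. It has to be localized through the corridor indicators, and that localization is where the factor $\Lambda(T)^2$ appears.

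\textbf{Chain contraction.} Start with $\varepsilon'=0$. Put $y_\ell=x_\ell-x'_\ell$ and integrate the common position of each pair of transitions $p_{d_\ell}(x_\ell-x_{\ell-1})\,p_{d_\ell}(x'_\ell-x'_{\ell-1})$ over ordinary intervals, via \eqref{heat-semigroup}, giving $p_{2d_\ell}(y_\ell-y_{\ell-1})$. Proceed from $\ell=m$ downward to the interval containing $\widehat t$, and from the left toward it. Each contracted factor $p_{2d_\ell}(y_\ell-y_{\ell-1})Q(y_\ell)$ is integrated in $y_\ell$ using \eqref{eq:critical-pQ}, giving $C\lambda(2d_\ell)\le C\lambda(d_\ell)$. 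Only the first variable pair carries the indicators, so the left-end contraction is done last, keeping the indicators attached to $x_0$ and $x'_0$.

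In the split interval ($1\le k\le m$), insert the anchor by Chapman--Kolmogorov. Combining $p_\alpha(z-x_{k-1})p_{d_k-\alpha}(x_k-z)$ with the second-chain kernel $p_{d_k}(x'_k-x'_{k-1})$, the bridge identity \eqref{eq:gaussian-bridge-identity} writes the first pair as $p_{d_k}(x_k-x_{k-1})$ times a Gaussian density in $z$ of variance $\tau\le d_k$. The anchor then contributes $\int Q(z-B_{\widehat t})\,p_\tau(z-\mu)\,\dd z$. By \eqref{eq:critical-pQ} this is bounded, and it is only summable over time at a log cost. Since we have no time integral over $\widehat t$ here, simply bound it by $1$ in that case.

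\textbf{Anchor and indicators.} The genuine obstacle arises at the end. After contraction, what remains is an integral of roughly the form
\[
\iint \1_{\mathcal C_T}(x_0)\,\1_{\mathcal C_T}(x'_0)\,Q(x_0-x'_0)\,(\text{anchor term})\,\dd x_0\,\dd x'_0,
\]
which is not translation-summable because $Q$ is non-integrable. For $k\ge1$, first bound the anchor kernel by $1$. Then change variables to $y_0=x_0-x'_0$ and $x'_0$. The $x'_0$-integral over $\mathcal C_T$ gives $|\mathcal C_T|\lesssim T$, which is too large. The paper's normalization, however, divides by nothing here, so the $T$ must instead be absorbed.

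The step I expect to be hardest is this accounting. I would try to keep the anchor factor $Q(z-B_{\widehat t})$ and use $\int_{\mathcal C_T}Q(z-B)\,\dd z\le C\Lambda(T)$. This holds since $\int_{|w|\le CR\sqrt T}(1+|w|)^{-2}\,\dd w\le C\log T$. The $z$-integration is then performed against the bounded heat kernel $p_\alpha$ only after the translation invariance of the chain has been used: the chain depends only on differences, and the shift is absorbed by $\int p\,\dd x=1$. Similarly, $\int_{\mathcal C_T}Q(y_0)\,\dd y_0\le C\Lambda(T)$ replaces the $\|Q\|_1$ used in Lemma~\ref{lem:integrable-anchor-chain}, since $y_0$ is a difference of two points in $\mathcal C_T$. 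This yields $C^{m+1}\Lambda(T)^2\prod_\ell\lambda(d_\ell)$. The factor $\Lambda(2qu)^4$ absorbs the two boundary transitions of lengths $\alpha$ and $d_k-\alpha$ (each at most $qu$) when \eqref{eq:critical-pQ} and \eqref{eq:critical-pQQ} are used there in place of exact convolution.

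\textbf{The case $\varepsilon'=1$.} The extra variables $\bar x'_0$ and $a'$ enter through $Q(\bar x'_0-B_{s_0-a'})\,p_{a'}(x'_0-\bar x'_0)$. Integrating $\bar x'_0$ gives $(p_{a'}*Q)(x'_0-B_{s_0-a'})\le C\lambda(a')$ by \eqref{eq:critical-pQ}. Integrating $a'\in I'\subset(0,qu)$ then costs $C\Lambda(qu)^2$, which is absorbed in $\Lambda(2qu)^4$. After this, the second chain is exactly $\Gamma^{(2)}_0$, except that the indicator now sits on $\bar x'_0$ rather than $x'_0$; bound it by $1$ and localize instead through the first chain's indicator. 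The estimate is therefore uniform in $B$, $k$, $\alpha$ and $I'$, as required for \eqref{eq:critical-anchor-chain}.
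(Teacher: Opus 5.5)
\textbf{The case $\varepsilon'=1$ breaks down.} You integrate $\bar x'_0$ over all of $\R^2$, bound $(p_{a'}*Q)(x'_0-B_{s_0-a'})\le C\lambda(a')$ uniformly in $x'_0$, and drop $\1_{\{\bar x'_0\in\mathcal C_T\}}$. After that, nothing localizes the second chain, and the integral you are left with is $+\infty$. Already for $m=0$, $k=1$ it contains the factor $\int_{\R^2}Q(x_0-x'_0)\,\dd x'_0$, which diverges because $Q\ge c_Q(1+|x|)^{-2}$ is not integrable in $d=2$. The first chain's indicator localizes only $x_0$ (or $z$ when $k=0$). Your own count needs $x_0-x'_0$ to be a difference of two corridor points, and that is exactly what was thrown away.

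The fix is to pair $p_{a'}$ with the left-endpoint link instead of with the path factor, which is what the paper does. Keep $\bar x'_0\in\mathcal C_T$ and integrate $x'_0$ to get $(p_{a'}*Q)(x_0-\bar x'_0)\le C\lambda(a')$; for $k=0$, integrating $x_0$ as well gives $(p_{\rho+a'}*Q)(z-\bar x'_0)\le C\lambda(\rho+a')$. Then pay the second $\Lambda(T)$ with $\int_{\mathcal C_T}Q(\bar x'_0-B_{s_0-a'})\,\dd\bar x'_0\le C\Lambda(T)$, while the anchor supplies the first. The $a'$-integral of $\lambda(a')+\lambda(\rho+a')$ is where $\Lambda(2qu)^2$ actually comes from; for $\varepsilon'=0$ no such factor is needed.

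\textbf{The case $\varepsilon'=0$: right count, missing mechanism.} One $\Lambda(T)$ for the anchor and one for $Q(x_0-x'_0)$ is the correct bookkeeping, but the steps you describe do not produce it.
\begin{itemize}
\item If the anchor is bounded by $1$ in the split interval, you are left with $\iint_{\mathcal C_T\times\mathcal C_T}Q(x_0-x'_0)\asymp T\Lambda(T)$, as you observe.
\item If the anchor is kept, the exact identity giving $p_{2d_\ell}(y_\ell-y_{\ell-1})$ is unavailable on the intervals between $x_0$ and $z$. The common position of each such pair meets a third kernel linked to $z$. Contracting ``from the left'' instead would integrate the indicator-carrying pair first.
\end{itemize}

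The missing step is the paper's asymmetric contraction. Integrate each $x'_\ell$ with the uniform bound $\int p_{d_\ell}(x'_\ell-x'_{\ell-1})Q(x_\ell-x'_\ell)\,\dd x'_\ell\le C\lambda(d_\ell)$ from \eqref{eq:critical-pQ}. Then integrate $x_\ell$ by Chapman--Kolmogorov, which funnels the first chain into $p_{|\widehat t-s_0|}(z-x_0)$. The left block is then at most $\int_{\mathcal C_T}(p_\rho*Q)(x_0-B_{\widehat t})\,\dd x_0\cdot\sup_{x_0}\int_{\mathcal C_T}Q(x_0-x'_0)\,\dd x'_0\le C\Lambda(T)^2$. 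For $k=0$ the indicator sits on $z$, and one uses $\int_{\mathcal C_T}Q(z-B_{\widehat t})\,\dd z$ directly.

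Your translation idea can also be made rigorous, but only anchor-first. With $u_\ell=x_\ell-z$, $v_\ell=x'_\ell-z$ and $k\ge1$, one has $\int Q(z-B_{\widehat t})\1_{\mathcal C_T}(z+u_0)\1_{\mathcal C_T}(z+v_0)\,\dd z\le C\Lambda(T)\1_{\{u_0-v_0\in\mathcal C_T-\mathcal C_T\}}$. After this, the relative-coordinate contraction from the proof of Lemma~\ref{lem:integrable-anchor-chain} applies, and $\int_{\mathcal C_T-\mathcal C_T}Q\le C\Lambda(T)$ supplies the second factor.
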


\begin{proof}
All other path factors have already been integrated out. The cases \(k=0\),
\(1\le k\le m\), and \(k=m+1\) mean respectively that this retained
factor is at the original left endpoint, strictly between two Gaussian
vertices, or after the last Gaussian vertex.

For \(D>0\),
\[
  \int_0^D\lambda(s)\,\dd s
  \le C\Lambda(D)^2.
\]
For \(\ell=m,\ldots,1\), first integrate \(x'_\ell\) in \(\mathfrak a^{(\varepsilon')}_{k,\alpha}(B)\). For
\(\ell\ge1\), the factor being integrated is the link
\(Q(x_\ell-x'_\ell)\) between the two Gaussian vertices at time
\(s_\ell\); we call it a relative link. The link at \(\ell=0\),
\(Q(x_0-x'_0)\), is the left-endpoint link and is not integrated at
this step. 

For \(1\le \ell\le m\), define
\[
  J_\ell(x_\ell,x'_{\ell-1})
  :=
  \int_{\R^2}
  p_{d_\ell}(x'_\ell-x'_{\ell-1})
  Q(x_\ell-x'_\ell)\,\dd x'_\ell .
\]
By the convolution estimate \eqref{eq:critical-pQ},
\begin{equation}
  J_\ell(x_\ell,x'_{\ell-1})
  \le
  C\frac{\Lambda(d_\ell)}{1+d_\ell}
  =
  C\lambda(d_\ell),
  \qquad
  1\le\ell\le m,
  \label{eq:critical-one-relative-link}
\end{equation}
uniformly in \(x_\ell,x'_{\ell-1}\). Thus the integration in
\(x'_\ell\) produces the factor \(C\lambda(d_\ell)\), while the
variable \(x_\ell\) is still integrated in the first chain.

Assume first that \(1\le k\le m\), so that
\[
  s_{k-1}<\widehat t<s_k,
  \qquad
  \alpha=\widehat t-s_{k-1},
  \qquad
  d_k-\alpha=s_k-\widehat t.
\]
After the integrations with indices \(\ell=m,\ldots,k+1\), the
right-hand part of the first chain has been removed by
\(\int p_t=1\). The factors involving the variables
\(x_k,x'_k\) are then
\[
  p_\alpha(z-x_{k-1})
  p_{d_k-\alpha}(x_k-z)
  p_{d_k}(x'_k-x'_{k-1})
  Q(x_k-x'_k).
\]
Consequently,
\begin{align*}
&\int_{\R^2}\!\dd x_k
  \int_{\R^2}\!\dd x'_k\,
  p_\alpha(z-x_{k-1})
  p_{d_k-\alpha}(x_k-z)
  p_{d_k}(x'_k-x'_{\ell-1})
  Q(x_k-x'_k)
\\
&\qquad
=
p_\alpha(z-x_{k-1})
\int_{\R^2}
p_{d_k-\alpha}(x_k-z)
J_k(x_k,x'_{k-1})\,\dd x_k
\\
&\qquad
\le
C\lambda(d_k)\,
p_\alpha(z-x_{k-1})
\int_{\R^2}
p_{d_k-\alpha}(x_k-z)\,\dd x_k
\\
&\qquad
=
C\lambda(d_k)\,p_\alpha(z-x_{k-1}).
\end{align*}
Thus the factor on the left of the anchor remains, whereas the
factor on the right of the anchor is integrated.

For \(1\le \ell\le k\), define the deterministic time length
\[
  \eta_\ell
  :=\widehat t-s_{\ell-1}
  =\alpha+\sum_{j=\ell}^{k-1}d_j,
\]
where the sum is empty when \(\ell=k\). Thus
\[
  \eta_k=\alpha,
  \qquad
  \eta_{\ell+1}=\widehat t-s_\ell,
  \qquad
  d_\ell+\eta_{\ell+1}=\eta_\ell.
\]
After the integrations with indices
\(k,k-1,\ldots,\ell+1\), the remaining first-chain factor
involving \(x_\ell\) is
\[
  p_{\eta_{\ell+1}}(z-x_\ell)
  =
  p_{\widehat t-s_\ell}(z-x_\ell).
\]
Therefore,
\begin{align*}
&\int_{\R^2}\!\dd x_\ell
  \int_{\R^2}\!\dd x'_\ell\,
  p_{d_\ell}(x_\ell-x_{\ell-1})
  p_{\eta_{\ell+1}}(z-x_\ell)
  p_{d_\ell}(x'_\ell-x'_{\ell-1})
  Q(x_\ell-x'_\ell)
\\
&\qquad\le
C\lambda(d_\ell)
\int_{\R^2}
p_{d_\ell}(x_\ell-x_{\ell-1})
p_{\eta_{\ell+1}}(z-x_\ell)\,\dd x_\ell
\\
&\qquad=
C\lambda(d_\ell)
p_{d_\ell+\eta_{\ell+1}}(z-x_{\ell-1})
\\
&\qquad=
C\lambda(d_\ell)
p_{\eta_\ell}(z-x_{\ell-1}).
\end{align*}
Finally, the remaining first-chain factor is \(p_{\hat t-s_0}(z-x_0)\).

For the case \(k=0\), the anchor lies before the first Gaussian
vertex. There is no split transition among the indices
\(\ell=1,\ldots,m\). Hence, for every \(\ell=m,\ldots,1\),
\begin{align*}
&\int_{\R^2}\!\dd x_\ell
  \int_{\R^2}\!\dd x'_\ell\,
  p_{d_\ell}(x_\ell-x_{\ell-1})
  p_{d_\ell}(x'_\ell-x'_{\ell-1})
  Q(x_\ell-x'_\ell)
\\
&\qquad
\le
C\lambda(d_\ell)
\int_{\R^2}
p_{d_\ell}(x_\ell-x_{\ell-1})\,\dd x_\ell
=
C\lambda(d_\ell).
\end{align*}
After all these integrations, the remaining first-chain factor is \(p_{s_0-\hat t}(x_0-z)\).

For the case \(k=m+1\), the anchor lies after the last Gaussian vertex.
Set
\[
  \eta_{m+1}:=\alpha.
\]
At the first step, \(\ell=m\), we obtain
\begin{align*}
&\int_{\R^2}\!\dd x_m
  \int_{\R^2}\!\dd x'_m\,
  p_{d_m}(x_m-x_{m-1})
  p_{\eta_{m+1}}(z-x_m)
  p_{d_m}(x'_m-x'_{m-1})
  Q(x_m-x'_m)
\\
&\qquad
\le
C\lambda(d_m)
\int_{\R^2}
p_{d_m}(x_m-x_{m-1})
p_{\eta_{m+1}}(z-x_m)\,\dd x_m
\\
&\qquad
=
C\lambda(d_m)
p_{d_m+\eta_{m+1}}(z-x_{m-1}).
\end{align*}
We then repeat the same argument for
\(\ell=m-1,\ldots,1\), defining
\[
  \eta_\ell:=d_\ell+\eta_{\ell+1}.
\]
At the end,
\[
  \eta_1
  =
  \alpha+d_1+\cdots+d_m
  =
  \widehat t-s_0,
\]
and the remaining first-chain factor is
\[
  p_{\eta_1}(z-x_0)=p_{\hat t-s_0}(z-x_0).
\]

Thus, in all three positions of the anchor, every index
\(\ell=1,\ldots,m\) contributes one factor
\(C\lambda(d_\ell)\). After the variables
\(x_\ell,x'_\ell\), \(1\le\ell\le m\), have been integrated, we give
\begin{equation}
  \mathfrak a^{(\varepsilon')}_{k,\alpha}(B)
  \le
  C^m\prod_{\ell=1}^{m}\lambda(d_\ell)\,
  \mathcal E^{(\varepsilon')}_{k,\alpha}(B).
  \label{eq:critical-anchor-elimination}
\end{equation}
where \(\mathcal E^{(\varepsilon')}_{k,\alpha}\) is the remaining
\emph{left-endpoint block}. Its first-chain kernel is
\(p_\rho(z-x_0)=p_{|\widehat t-s_0|}(z-x_0)\), obtained by the above computation.
The term ``left endpoint'' refers to the index-\(0\) variables and the
two original corridor constraints. Put \(\mathcal C:=\mathcal C_T\),
\(b:=B_{\widehat t}\), and \(b_{a'}:=B_{s_0-a'}\). The block is given
explicitly below.

For \(\varepsilon'=0\),
\begin{align*}
  \mathcal E^{(0)}_{0,\alpha}(B)
  &=\int_{\mathcal C}\!\dd z\int_{\mathcal C}\!\dd x'_0
    \int_{\R^2}\!\dd x_0\,
    Q(z-b)p_\rho(x_0-z)Q(x_0-x'_0),\\
  \mathcal E^{(0)}_{k,\alpha}(B)
  &=\int_{\mathcal C}\!\dd x_0\int_{\mathcal C}\!\dd x'_0
    \int_{\R^2}\!\dd z\,
    Q(z-b)p_\rho(z-x_0)Q(x_0-x'_0),
    \quad 1\le k\le m+1.
\end{align*}
For \(\varepsilon'=1\),
\begin{align*}
  \mathcal E^{(1)}_{0,\alpha}(B)
  &=\int_{I'}\!\dd a'\int_{\mathcal C}\!\dd z
    \int_{\mathcal C}\!\dd\bar x'_0
    \int_{(\R^2)^2}\!\dd x_0\dd x'_0\,
    Q(z-b)Q(\bar x'_0-b_{a'})\\[-1mm]
  &\hspace{28mm}\times p_\rho(x_0-z)
    p_{a'}(x'_0-\bar x'_0)Q(x_0-x'_0),\\
  \mathcal E^{(1)}_{k,\alpha}(B)
  &=\int_{I'}\!\dd a'\int_{\mathcal C}\!\dd x_0
    \int_{\mathcal C}\!\dd\bar x'_0
    \int_{(\R^2)^2}\!\dd z\dd x'_0\,
    Q(z-b)Q(\bar x'_0-b_{a'})\\[-1mm]
  &\hspace{28mm}\times p_\rho(z-x_0)
    p_{a'}(x'_0-\bar x'_0)Q(x_0-x'_0),
    \quad 1\le k\le m+1.
\end{align*}

The corridor estimate used below is

\begin{align}
  \sup_{v\in\R^2}\int_{v+\mathcal C}Q(y)\,\dd y
  &\le C\Lambda(T),
  \label{eq:critical-corridor-mass}\\
  \sup_{c\in\R^2}\int_{\mathcal C}(p_a*Q)(x-c)\,\dd x
  &\le C\Lambda(T),
  \qquad a\ge0.
  \label{eq:critical-endpoint-estimates}
\end{align}
The second estimate follows from the first by Gaussian translation and
Fubini; the case \(a=0\) is understood by direct substitution. Applying
these estimates to the four displayed formulas gives, when
\(\varepsilon'=0\),

\[
  \mathcal E^{(0)}_{k,\alpha}(B)\le C\Lambda(T)^2.
\]

If \(\varepsilon'=1\), the inner integrations in the two cases are
\begin{align*}
  &\int_{(\R^2)^2}p_\rho(x-z)Q(x-x')p_{a'}(x'-\bar x')
    \,\dd x\dd x'=(p_{\rho+a'}*Q)(z-\bar x'),\\
  &\int_{\R^2}p_{a'}(x'-\bar x')Q(x-x')\,\dd x'
    =(p_{a'}*Q)(x-\bar x').
\end{align*}
The first identity applies to \(k=0\), and the second to
\(1\le k\le m+1\). Therefore \eqref{eq:critical-pQ} and
\eqref{eq:critical-corridor-mass} give

\begin{align}
  \mathcal E^{(1)}_{k,\alpha}(B)
  &\le C\Lambda(T)^2
  \int_{I'}\bigl[\lambda(a')+\lambda(\rho+a')\bigr]\,\dd a' \le C\Lambda(T)^2\Lambda(2qu)^2.
  \label{eq:critical-optional-endpoint}
\end{align}

The formulas above cover all three positions
\(k=0,1,\ldots,m+1\). When \(m=0\), the product in
\eqref{eq:critical-anchor-elimination} is empty and the same displayed
left-endpoint formulas apply directly.
Since \(\Lambda\ge1\), \eqref{eq:critical-anchor-elimination} and
\eqref{eq:critical-optional-endpoint} imply
\eqref{eq:critical-anchor-chain}.
\end{proof}

Now, we return to \eqref{eq:critical-path-reduction} to apply
Lemma~\ref{lem:critical-anchor-chain} to its right-hand side. We first
give the expression of \(\mathscr R_{A,A'}^{\mathrm{cr}}(B)\).

Recall \(m=q-r\), and relabel the paired Gaussian vertices as
\((s_\ell,x_\ell)\) and \((s_\ell,x'_\ell)\), \(0\le\ell\le m\), as in
Lemma~\ref{lem:critical-anchor-chain}. The position of the anchor is
fixed by its original index \(j_*\):
\[
  k:=
  \begin{cases}
    0, & j_*<a_0,\\
    \ell, & a_{\ell-1}<j_*<a_\ell\quad(1\le\ell\le m),\\
    m+1, & j_*>a_m.
  \end{cases}
\]
There is no equality because \(j_*\in A\) and \(a_\ell\in A^c\). We define
\[
  \mathcal S_{A,A'}:=
  \left\{\mathbf s:0<s_0<\cdots<s_m<T,\ 
  \mathcal T_A(\mathbf s)\ne\varnothing,\ 
  \mathcal T_{A'}(\mathbf s)\ne\varnothing\right\}.
\]
For \(\mathbf s\in\mathcal S_{A,A'}\), see the definition of \(\mathcal T_A(\mathbf s)\) and \(\mathcal T_{A'}(\mathbf s)\) in \eqref{time fiber}, and put
\[
  H_{A,j_*}(\mathbf s)
  :=
  \{t_{j_*}:(t_j)_{j\in A}\in\mathcal T_A(\mathbf s)\}.
\]
Thus \(H_{A,j_*}(\mathbf s)\) is the remaining anchor-time range. If
\(\varepsilon'=1\), set
\[
  I'_{A'}(\mathbf s)
  :=
  \{s_0-t'_0:(t'_j)_{j\in A'}\in\mathcal T_{A'}(\mathbf s)\}.
\]
Moreover,
\[
  \begin{aligned}
  &H_{A,j_*}(\mathbf s)\subset
  \begin{cases}
    (s_0-qu,s_0), & k=0,\\
    (s_{k-1},s_k), & 1\le k\le m,\\
    (s_m,s_m+qu), & k=m+1,
  \end{cases}
  \end{aligned}
\]
When \(\varepsilon'=1\), the same constraints give
\[
  I'_{A'}(\mathbf s)\subset(0,qu),\qquad
  0<s_0-a'<T/4\quad(a'\in I'_{A'}(\mathbf s)).
\]
For \(\widehat t\in H_{A,j_*}(\mathbf s)\), define
\[
  \alpha(\mathbf s,\widehat t):=
  \begin{cases}
    s_0-\widehat t, & k=0,\\
    \widehat t-s_{k-1}, & 1\le k\le m,\\
    \widehat t-s_m, & k=m+1.
  \end{cases}
\]
In the formula below, \(\mathfrak a^{(\varepsilon')}_{k,\alpha}\)
means \(\mathfrak a^{(0)}_{k,\alpha}\) when \(\varepsilon'=0\), and
\(\mathfrak a^{(1)}_{k,\alpha}\) with
\(I'=I'_{A'}(\mathbf s)\) when \(\varepsilon'=1\). If \(j_*=0\),
\(z=\bar x_0\) and the \(z\)-integration in the lemma is the original
left-endpoint integration; if \(j_*>0\), then \(\bar x_0=x_0\).
Likewise, \(\varepsilon'=0\) gives \(t'_0=s_0\) and
\(\bar x'_0=x'_0\), while \(\varepsilon'=1\) changes variables from
\(t'_0\) to \(a'=s_0-t'_0\). Hence, by Tonelli's theorem and the definitions of \(\Gamma^{(1)}_{k,\alpha}\) and
\(\Gamma^{(2)}_{\varepsilon'}\), the residual integral is
\[
  \mathscr R_{A,A'}^{\mathrm{cr}}(B)
  =
  \int_{\mathcal S_{A,A'}}
  \int_{H_{A,j_*}(\mathbf s)}
  \mathfrak a^{(\varepsilon')}_{k,\alpha(\mathbf s,\widehat t)}(B)\,
  \dd\widehat t\,\dd\mathbf s .
\]
Thus the inner integral in \(\mathscr R_{A,A'}^{\mathrm{cr}}(B)\) is
the left-hand side of Lemma~\ref{lem:critical-anchor-chain}.

Put
\[
  d_\ell:=s_\ell-s_{\ell-1},\qquad 1\le\ell\le m.
\]
The residual time constraints imply
\[
  s_m-s_0\le qu,\qquad d_\ell\in(0,qu),\qquad
  |H_{A,j_*}(\mathbf s)|\le qu.
\]
Applying Lemma~\ref{lem:critical-anchor-chain} gives
\begin{align}
  \mathscr R_{A,A'}^{\mathrm{cr}}(B)
  &\le
  C^{m+1}\Lambda(T)^2\Lambda(2qu)^4
  \int_{\mathcal S_{A,A'}}
  |H_{A,j_*}(\mathbf s)|
  \prod_{\ell=1}^{m}\lambda(d_\ell)\,
  \dd\mathbf s .
  \label{eq:critical-residual-fiber-bound}
\end{align}
Writing \(\mathbf d=(d_1,\ldots,d_m)\) and
\[
  \mathbf s(s_0,\mathbf d)
  :=
  (s_0,s_0+d_1,\ldots,s_0+d_1+\cdots+d_m),
\]
and enlarging the \(d_\ell\)-domain, Tonelli's theorem gives
\begin{align}
  \mathscr R_{A,A'}^{\mathrm{cr}}(B)
  &\le
  C^{q+1}\Lambda(T)^2\Lambda(2qu)^4
  \int_0^T\dd s_0
  \int_{(0,qu)^m}
  |H_{A,j_*}(\mathbf s(s_0,\mathbf d))|
  \prod_{\ell=1}^{m}\lambda(d_\ell)\,\dd\mathbf d
  \notag\\
  &\le
  C^{q+1}T(qu)\Lambda(T)^2
  \Lambda(2qu)^{2m+4}
  =
  C^{q+1}T(qu)\Lambda(T)^2
  \Lambda(2qu)^{2(q-r)+4}.
  \label{eq:critical-remaining-integral}
\end{align}
When \(m=0\), \(k\in\{0,1\}\), while the \(d\)-integral and the product
are interpreted as \(1\).

Combining \eqref{eq:critical-path-reduction} and
\eqref{eq:critical-remaining-integral} gives
\begin{align}
  \mathcal D_{A,A'}(B)
  &\le
  C^{q+1}T(qu)\Lambda(T)^2
  \Lambda(2qu)^{2(q+r)+2}
  \notag\\
  &\le
  C^{q+1}T(qu)\Lambda(u)^{2(q+r)+4},
  \label{eq:critical-fixed-contraction-bound}
\end{align}
where we used
\[
  \Lambda(T)\asymp\Lambda(u),
  \qquad
  \Lambda(2qu)\asymp\Lambda(u).
\]
Dividing by the lower bound in
\eqref{eq:critical-sigma-bounds}, we obtain
\[
  \frac{\beta^{2r}\mathcal D_{A,A'}(B)}
       {(\sigma_{q,T}^{\mathrm{cr}})^2}
  \le
  C^{q+1}\frac{qu}{T}
  \beta^{2r}\Lambda(u)^{2r+3}.
\]
Summing over \(A,A'\) with \(|A|=|A'|=r\) and then over
\(r=1,\ldots,q\) proves
\eqref{eq:critical-shifted-variance}.

\subsection{The matching lower bound}
By \eqref{eq:replica-pinning-comparison}, it remains to estimate the pinning free energy associated with a critical potential.
\begin{lemma}
\label{lem:critical-pinning-upper}
Let \(V:\R^2\to[0,\infty)\) be bounded and satisfy
\[
  V(x)\le C_V(1+|x|)^{-2}.
\]
There are \(c,C,h_0>0\) such that
\begin{equation}
  F_V(h)
  \le C\exp\left\{-\frac{c}{\sqrt h}\right\},
  \qquad 0<h\le h_0.
  \label{eq:critical-pinning-upper}
\end{equation}
\end{lemma}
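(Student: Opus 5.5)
We follow the block argument of Lemma~\ref{lem:two-d-integrable-pinning}: bound the Khasminskii quantity
\[
  \eta(T):=\sup_{x\in\R^2}E_x\Big[\int_0^TV(B_s)\dd s\Big]=\sup_x\int_0^T(p_s*V)(x)\dd s ,
\]
pick a block length \(T_h\) with \(h\,\eta(T_h)\le 1/2\), and then use Lemma~\ref{lem:upper} together with the Markov property at block endpoints. This gives \(F_V(h)\le (\log2)/T_h\), exactly as before. The only new ingredient is the growth rate of \(\eta(T)\) when \(V\notin L^1(\R^2)\).

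\textbf{Growth of \(\eta\).} By the argument proving \eqref{eq:critical-pQ}, the tail bound \(V\le C_V(1+|x|)^{-2}\) implies
\[
  \sup_x(p_s*V)(x)\le C\,\frac{\Lambda(s)}{1+s}.
\]
Recall that the convolution of two radially decreasing functions is maximized at the origin, and that \((p_s*w)(0)\lesssim s^{-1}\log s\) for \(s\ge 2\). For small \(s\) we use \(\|V\|_\infty\) instead. Integrating in \(s\) yields
\[
  \eta(T)\le C_0+C_1\int_2^T\frac{\log s}{s}\dd s\le C_0+C_1(\log T)^2,\qquad T\ge2 .
\]
This squared logarithm is the sole source of the \(\sqrt h\) in the exponent.

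\textbf{Choice of block length.} Take \(T_h:=\exp\{b/\sqrt h\}\), with \(b\) small enough that \(C_1b^2\le1/4\). Then
\[
  h\,\eta(T_h)\le hC_0+C_1b^2\le 1/2
\]
once \(h\le h_0\) with \(h_0C_0\le1/4\). Lemma~\ref{lem:upper} then gives the uniform one-block bound
\[
  \sup_x E_x\Big[\exp\Big\{h\int_0^{T_h}V(B_s)\dd s\Big\}\Big]\le 2 .
\]
Next split \([0,t]\) into \(n=\lfloor t/T_h\rfloor\) full blocks and a remainder block, and apply the Markov property at the block endpoints:
\[
  E_0\Big[\exp\Big\{h\int_0^tV(B_s)\dd s\Big\}\Big]\le 2^{n+1}.
\]
Taking logarithms, dividing by \(t\), and letting \(t\to\infty\) gives
\[
  F_V(h)\le \frac{\log 2}{T_h}=(\log2)\,e^{-b/\sqrt h},
\]
which is \eqref{eq:critical-pinning-upper}. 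The remainder block is handled because \(\eta\) is monotone in \(T\).

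\textbf{Main obstacle.} The one step that needs care is the uniform convolution estimate \(\sup_x(p_s*V)(x)\lesssim \Lambda(s)/(1+s)\) under only an upper tail bound, since \(V\) itself need not be radial or monotone. To handle it we dominate \(V\) pointwise by the radially decreasing majorant \(C(1+|x|)^{-2}\) (using boundedness of \(V\) near the origin). Then \(p_s*V\le C\,p_s*w\), and the rearrangement fact reduces the supremum to the value at \(0\), which is computed in polar coordinates as in \eqref{eq:critical-pQ}. Everything else is a verbatim repetition of the integrable case with \(\log T\) replaced by \((\log T)^2\).

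Combined with \eqref{eq:replica-pinning-comparison} at \(h=2\beta^2\) and \(V(x)=Q(\sqrt2x)\), this estimate produces the \(e^{-c/\beta}\) lower bound in \eqref{eq:two-d-critical-free-energy}.
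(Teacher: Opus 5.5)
Your proposal is correct and follows the paper's proof essentially step for step. The paper also bounds $\eta_V(T)\le C_0+C_1\Lambda(T)^2$ via \eqref{eq:critical-pQ}, which matches your $(\log T)^2$ up to constants. It then takes $T_h=\exp\{b/\sqrt h\}$ and concludes $F_V(h)\le(\log 2)/T_h$ using Lemma~\ref{lem:upper} and Markov submultiplicativity over blocks. Your explicit domination of $V$ by the radially decreasing majorant $C_V(1+|x|)^{-2}$ is a detail the paper leaves implicit when it applies \eqref{eq:critical-pQ} to $V$, but it is the same argument.
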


\begin{proof}
Set
\[
  \eta_V(T)
  :=
  \sup_{x\in\R^2}
  E_x\left[\int_0^T V(B_s)\dd s\right]
  =
  \sup_x\int_0^T(p_s*V)(x)\dd s.
\]
The \eqref{eq:critical-pQ} gives, for \(T\ge1\),
\[
  \eta_V(T)
  \le C_0+C_1\int_1^T\frac{\Lambda(s)}{1+s}\dd s
  \le C_0+C_1\Lambda(T)^2.
\]
Choose \(b>0\) so small that \(4C_1b^2<1\), and put
\[
  T_h:=\exp\{b/\sqrt h\}.
\]
For all sufficiently small \(h\),
\[
  h\eta_V(T_h)
  \le hC_0+C_1h\Lambda(T_h)^2
  \le\frac12.
\]
Lemma~\ref{lem:upper} gives
\[
  \sup_xE_x\exp\left\{
    h\int_0^{T_h}V(B_s)\dd s
  \right\}
  \le2.
\]
The Markov property gives submultiplicativity. Writing \(t=nT_h+r\), \(0\le r<T_h\) yields
\[
  \sup_xE_x\exp\left\{
    h\int_0^tV(B_s)\dd s
  \right\}
  \le2^{n+1}.
\]
Therefore
\[
  F_V(h)
  \le\frac{\log2}{T_h}
  =
  (\log2)\exp\left\{-\frac b{\sqrt h}\right\}.
\]
\end{proof}

Apply Lemma~\ref{lem:critical-pinning-upper} to
\(V(x)=Q(\sqrt2x)\). The replica comparison
\eqref{eq:replica-pinning-comparison} gives
\begin{equation}
  p(\beta)
  \ge
  -C F_{Q(\sqrt2\,\cdot)}(2\beta^2)
  \ge
  -C\exp\left\{-\frac c\beta\right\}.
  \label{eq:critical-free-energy-lower}
\end{equation}
Together with \eqref{eq:critical-free-energy-upper}, this proves
\eqref{eq:two-d-critical-free-energy} and completes the proof of
Theorem~\ref{thm:radial-regimes}(iii).

\appendix
\section{Some equalities}
\label{app:analytic}

\subsection{Convolution estimates}

\begin{lemma}
\label{lem:one-dimensional-convolution}
Let \(0<\alpha<1\) and let \(Z\sim N(0,1)\).  There is
\(C_\alpha<\infty\) such that, for every \(a\in\R\) and \(s\ge0\),
\begin{equation}
  \E\big[(1+|a+\sqrt sZ|)^{-\alpha}\big]
  \le C_\alpha(1+s)^{-\alpha/2}.
  \label{eq:one-dimensional-upper}
\end{equation}
For every \(L<\infty\), there is \(c_{\alpha,L}>0\) such that
\begin{equation}
  |a|\le L\sqrt s
  \quad\Longrightarrow\quad
  \E\big[(1+|a+\sqrt sZ|)^{-\alpha}\big]
  \ge c_{\alpha,L}(1+s)^{-\alpha/2}.
  \label{eq:one-dimensional-lower}
\end{equation}
\end{lemma}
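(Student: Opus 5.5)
The plan is to prove both inequalities by splitting according to whether $s\le1$ or $s\ge1$, and then, for large $s$, rescaling by $\sqrt s$. Write $f(y):=(1+|y|)^{-\alpha}$, so the quantity to estimate is
\[
  E(a,s):=\E\big[f(a+\sqrt sZ)\big].
\]

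\emph{Small time.} For $s\le1$ the upper bound is trivial: since $f\le1$, we have $E(a,s)\le1\le2^{\alpha/2}(1+s)^{-\alpha/2}$.

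For the lower bound with $|a|\le L\sqrt s\le L$, note $|a+\sqrt sZ|\le L+|Z|$. Hence
\[
  E(a,s)\ge\E\big[(1+L+|Z|)^{-\alpha}\big]=:c_L'>0,
\]
and since $(1+s)^{-\alpha/2}\le1$, this gives \eqref{eq:one-dimensional-lower} with any constant at most $c_L'$.

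\emph{Upper bound for $s\ge1$.} Split the expectation according to whether $|a+\sqrt sZ|\ge\sqrt s$ or not.
\begin{itemize}
  \item On the event $\{|a+\sqrt sZ|\ge\sqrt s\}$, the integrand is at most $s^{-\alpha/2}$.
  \item On the complementary event, the variable $a+\sqrt sZ$ has density at most $(2\pi s)^{-1/2}$, so the contribution is at most
  \[
    (2\pi s)^{-1/2}\int_{-\sqrt s}^{\sqrt s}(1+|y|)^{-\alpha}\dd y\le \frac{2}{(1-\alpha)\sqrt{2\pi}}\,s^{-1/2}(1+\sqrt s)^{1-\alpha}.
  \]
  Here integrability near the origin holds because $\alpha<1$. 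The right-hand side is at most $C s^{-\alpha/2}$.
\end{itemize}
Adding the two pieces and using $s^{-\alpha/2}\le2^{\alpha/2}(1+s)^{-\alpha/2}$ for $s\ge1$ gives \eqref{eq:one-dimensional-upper}, uniformly in $a$. Uniformity is automatic in this route, because the density bound does not depend on the mean $a$. This is why the density bound is preferable to an argument based on symmetric decreasing rearrangement.

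\emph{Lower bound for $s\ge1$.} Restrict to the event $\{|Z|\le1\}$, which has probability $p_1>0$. On this event, $|a|\le L\sqrt s$ gives
\[
  |a+\sqrt sZ|\le(L+1)\sqrt s,
\]
so
\[
  E(a,s)\ge p_1\big(1+(L+1)\sqrt s\big)^{-\alpha}\ge p_1(L+2)^{-\alpha}s^{-\alpha/2}\ge p_1(L+2)^{-\alpha}(1+s)^{-\alpha/2}.
\]

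Taking $c_{\alpha,L}$ to be the minimum of the small-time and large-time constants finishes the proof. No step is a real obstacle; the only point needing care is the middle region in the upper bound, where $\alpha<1$ is used. (The later product-kernel lemma then follows by independence of coordinates.)
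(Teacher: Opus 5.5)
Your proof is correct. The small-$s$ cases and the lower bound are essentially the paper's. The paper treats $s\le1$ as immediate, and it restricts to $\{|Z|\le L+1\}$ instead of $\{|Z|\le1\}$, which only changes constants. Your upper bound for $s\ge1$, however, takes a different route.

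The paper first uses that the convolution of two even functions that are nonincreasing on $[0,\infty)$ is maximized at the origin, which reduces to $a=0$. It then bounds $\E(1+\sqrt s|Z|)^{-\alpha}$ by splitting at $|Z|=s^{-1/2}$ and using $\E|Z|^{-\alpha}<\infty$. You instead split in the spatial variable at $|y|=\sqrt s$, and pair the $a$-independent density bound $(2\pi s)^{-1/2}$ with the local mass $\int_{-\sqrt s}^{\sqrt s}(1+|y|)^{-\alpha}\dd y\lesssim s^{(1-\alpha)/2}$.

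Your version is fully elementary; the paper states the unimodality fact without proof. It also extends verbatim to any random variable whose density has sup-norm $O(s^{-1/2})$. The paper's version is shorter once that fact is granted. It is not less uniform, though: the rearrangement argument shows the supremum over $a$ is attained at $a=0$. So your remark that the density bound is ``preferable'' for the sake of uniformity overstates the case.

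One correction to your commentary: $(1+|y|)^{-\alpha}\le1$, so nothing needs to be integrable near the origin in your argument. The hypothesis $\alpha<1$ enters through the growth $\int_{-\sqrt s}^{\sqrt s}(1+|y|)^{-\alpha}\dd y\asymp s^{(1-\alpha)/2}$, which is what makes the near-region term $O(s^{-\alpha/2})$. For $\alpha\ge1$ that term is of order $s^{-1/2}$ (times $\log s$ at $\alpha=1$), and the estimate itself fails. Integrability at the origin is where $\alpha<1$ enters the paper's route, via $\E|Z|^{-\alpha}<\infty$.
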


\begin{proof}
The results \eqref{eq:one-dimensional-upper},\eqref{eq:one-dimensional-lower} are immediate for \(0\le s\le1\), after adjusting the
constants.  Let \(s\ge1\).  Both
\(
x\mapsto(1+|x|)^{-\alpha}
\)
and the centered Gaussian density are even and nonincreasing on
\([0,\infty)\). Their convolution is therefore maximized at zero.  Consequently,
\begin{align*}
  \sup_{a\in\R}
  \E(1+|a+\sqrt sZ|)^{-\alpha}
  &\le
  \E(1+\sqrt s|Z|)^{-\alpha} \\
  &\le
  \Pp(|Z|\le s^{-1/2})
  +s^{-\alpha/2}\E[|Z|^{-\alpha}\1_{\{|Z|>s^{-1/2}\}}] \\
  &\le Cs^{-1/2}+Cs^{-\alpha/2}
  \le Cs^{-\alpha/2}.
\end{align*}
Here \(\E|Z|^{-\alpha}<\infty\) because \(\alpha<1\).

For the lower bound, write \(b=a/\sqrt s\), so \(|b|\le L\).
On the event \(|Z|\le L+1\),
\[
  1+|a+\sqrt sZ|
  \le
  1+(2L+1)\sqrt s
  \le C_L\sqrt s.
\]
This event has a positive probability depending only on \(L\), which
proves \eqref{eq:one-dimensional-lower}.
\end{proof}

\begin{lemma}
\label{lem:product-gaussian-convolution}
Let \(G_{\bm\alpha}\) be defined by \eqref{eq:Galpha}, and let
\(Z\sim N(0,I_d)\).  There is \(C<\infty\) such that
\begin{equation}
  \sup_{a\in\R^d}
  \E\big[G_{\bm\alpha}(a+\sqrt sZ)\big]
  \le C(1+s)^{-\kappa/2}.
  \label{eq:product-gaussian-upper}
\end{equation}
For every \(L<\infty\), there is \(c_L>0\) such that
\begin{equation}
  |a|\le L\sqrt s
  \quad\Longrightarrow\quad
  \E\big[G_{\bm\alpha}(a+\sqrt sZ)\big]
  \ge c_L(1+s)^{-\kappa/2}.
  \label{eq:product-gaussian-lower}
\end{equation}
\end{lemma}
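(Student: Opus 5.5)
The plan is to reduce to the one-dimensional estimate of Lemma~\ref{lem:one-dimensional-convolution} by factorization. Since \(Z\sim N(0,I_d)\) has independent coordinates \(Z_1,\ldots,Z_d\), and \(G_{\bm\alpha}\) is a product of functions of the individual coordinates, we have for every \(a\in\R^d\) and \(s\ge0\)
\[
  \E\big[G_{\bm\alpha}(a+\sqrt sZ)\big]
  =\prod_{j=1}^d
  \E\big[(1+|a_j+\sqrt sZ_j|)^{-\alpha_j}\big].
\]
Each \(Z_j\) is a standard one-dimensional Gaussian, and each \(\alpha_j\in(0,1)\), so Lemma~\ref{lem:one-dimensional-convolution} applies to every factor with exponent \(\alpha_j\) and shift \(a_j\).

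For the upper bound \eqref{eq:product-gaussian-upper}, apply \eqref{eq:one-dimensional-upper} to each factor. This gives
\[
  \E\big[G_{\bm\alpha}(a+\sqrt sZ)\big]\le\prod_j C_{\alpha_j}(1+s)^{-\alpha_j/2}=C(1+s)^{-\kappa/2}
\]
with \(C=\prod_jC_{\alpha_j}\). The bound is uniform in \(a\) because each one-dimensional bound is.

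For the lower bound \eqref{eq:product-gaussian-lower}, note that \(|a|\le L\sqrt s\) (Euclidean norm) implies \(|a_j|\le L\sqrt s\) for every coordinate. Hence \eqref{eq:one-dimensional-lower} applies to each factor with the same \(L\), and multiplying yields the claim with \(c_L=\prod_j c_{\alpha_j,L}\). If the norm in the hypothesis were read as \(|\cdot|_\infty\), the same argument applies verbatim.

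There is no real obstacle. The only points to check are that independence of the coordinates of \(Z\) justifies the factorization, which holds because the integrand is a product of nonnegative functions of distinct independent variables (Tonelli), and that the constants do not depend on \(a\) or \(s\). Both are inherited directly from Lemma~\ref{lem:one-dimensional-convolution}.
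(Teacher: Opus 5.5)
Your proposal is correct and matches the paper's proof: factor the expectation over the independent coordinates of \(Z\), apply Lemma~\ref{lem:one-dimensional-convolution} to each factor (using \(|a_j|\le|a|\le L\sqrt s\) for the lower bound), and multiply using \(\kappa=\sum_j\alpha_j\). The constants \(C=\prod_j C_{\alpha_j}\) and \(c_L=\prod_j c_{\alpha_j,L}\) are as you state.
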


\begin{proof}
The coordinates of \(Z\) are independent, so the expectation factors
into \(d\) one-dimensional expectations.  Apply
Lemma~\ref{lem:one-dimensional-convolution} to every coordinate and use
\(\kappa=\sum_j\alpha_j\).
\end{proof}

\subsection{Pitt inequality} The one-dimensional Pitt inequality states that, for \(0<\alpha<1\), there exists a constant \(C_\alpha<\infty\) such that
\begin{equation}
  \int_{\R}|x|^{-\alpha}|g(x)|^2\dd x
  \le
  C_\alpha
  \int_{\R}|\xi|^\alpha|\widehat g(\xi)|^2\dd\xi
  \label{eq:one-dimensional-pitt}
\end{equation}
for all \(g \in \mathcal{S}(\mathbb{R})\) (the Schwartz space). See~\cite{Beckner1995}.

\begin{lemma}
\label{lem:product-pitt}
Let \(\alpha_j\in(0,1)\) and \(\kappa=\sum_j\alpha_j\le2\).
For every \(f\in H^1(\R^d)\),
\begin{align}
  \int_{\R^d}
  \prod_{j=1}^d|x_j|^{-\alpha_j}|f(x)|^2\dd x
  &\le
  C\int_{\R^d}
  \prod_{j=1}^d|\xi_j|^{\alpha_j}|\widehat f(\xi)|^2\dd\xi
  \label{eq:product-pitt} \\
  &\le
  C\norm{\nabla f}_2^\kappa
  \norm{f}_2^{2-\kappa}. \notag
\end{align}
Consequently,
\begin{equation}
  \int_{\R^d}G_{\bm\alpha}(x)|f(x)|^2\dd x
  \le
  C\norm{\nabla f}_2^\kappa
  \norm{f}_2^{2-\kappa}.
  \label{eq:Galpha-pitt}
\end{equation}
\end{lemma}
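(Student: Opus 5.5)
The plan has three steps: tensorize the one-dimensional Pitt inequality to get the first inequality in \eqref{eq:product-pitt}, pass from the product Fourier weight to the gradient by a weighted AM--GM inequality plus Hölder, and then compare $G_{\bm\alpha}$ with the singular weight. By density (Schwartz functions are dense in $H^1(\R^d)$, and both sides are continuous for the $H^1$ norm, or at least lower semicontinuous on the left by Fatou), it suffices to treat $f\in\mathcal S(\R^d)$.

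\emph{First inequality.} I would apply \eqref{eq:one-dimensional-pitt} in one variable at a time. Fix $x_2,\dots,x_d$ and apply Pitt in $x_1$ to $g=f(\cdot,x_2,\dots,x_d)$. This replaces $|x_1|^{-\alpha_1}|f|^2$ by $|\xi_1|^{\alpha_1}|\mathcal F_1 f|^2$, where $\mathcal F_1$ is the partial Fourier transform in the first coordinate. Then integrate against $\prod_{j\ge2}|x_j|^{-\alpha_j}$. Next fix $\xi_1,x_3,\dots$ and apply Pitt in $x_2$ to $\mathcal F_1 f(\xi_1,\cdot,x_3,\dots)$. Each partial transform of a Schwartz function is again Schwartz, and Plancherel in the remaining variables is unaffected. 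After $d$ steps, using Tonelli throughout, we obtain $\int\prod_j|\xi_j|^{\alpha_j}|\widehat f|^2\dd\xi$ with constant $\prod_j C_{\alpha_j}$.

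\emph{Second inequality.} This is where $\kappa\le2$ is used. Since $\alpha_j\ge0$ with $\sum_j\alpha_j/2=\kappa/2\le1$, the weighted AM--GM inequality (with weights $\alpha_j/2$ and $1-\kappa/2$) gives
\[
\prod_j|\xi_j|^{\alpha_j}=\prod_j(|\xi_j|^2)^{\alpha_j/2}\cdot 1^{1-\kappa/2}\le |\xi|^{\kappa}.
\]
Then Hölder with exponents $2/\kappa$ and $2/(2-\kappa)$ on the measure $|\widehat f|^2\dd\xi$ yields
\[
\int|\xi|^\kappa|\widehat f|^2\le\Big(\int|\xi|^2|\widehat f|^2\Big)^{\kappa/2}\Big(\int|\widehat f|^2\Big)^{1-\kappa/2},
\]
which equals $C\norm{\nabla f}_2^\kappa\norm f_2^{2-\kappa}$ by Plancherel. (If $\kappa=2$, this is just $\int|\xi|^2|\widehat f|^2$.)

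\emph{Consequence \eqref{eq:Galpha-pitt}.} This follows from the pointwise bound $(1+|x_j|)^{-\alpha_j}\le|x_j|^{-\alpha_j}$, so $G_{\bm\alpha}(x)\le\prod_j|x_j|^{-\alpha_j}$. Finally, extend to $f\in H^1$ by approximating with $f_n\in\mathcal S$ in $H^1$. Along a subsequence $f_n\to f$ a.e., so Fatou controls the left side while the right side converges. The main (mild) obstacle is the bookkeeping in the iterated partial Fourier transforms; it needs the one-dimensional Pitt inequality to hold for almost every slice, which is fine for Schwartz $f$.
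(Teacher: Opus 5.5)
Your proposal is correct and follows the paper's proof essentially verbatim. The one-dimensional Pitt inequality is applied coordinate by coordinate to successive partial Fourier transforms with Tonelli, followed by $\prod_j|\xi_j|^{\alpha_j}\le|\xi|^\kappa$, H\"older with exponents $2/\kappa$ and $2/(2-\kappa)$, Plancherel, and $(1+|x_j|)^{-\alpha_j}\le|x_j|^{-\alpha_j}$. Your Schwartz-density and Fatou extension to $H^1(\R^d)$ supplies a step the paper leaves implicit.

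One small slip: weighted AM--GM with weights $\alpha_j/2$ and $1-\kappa/2$ gives only the inhomogeneous bound $\sum_j\frac{\alpha_j}{2}|\xi_j|^2+1-\frac{\kappa}{2}$, not $|\xi|^\kappa$. The pointwise bound instead follows directly from $|\xi_j|\le|\xi|$, for every $\kappa$, so the hypothesis $\kappa\le2$ is used only in the H\"older step.
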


\begin{proof}
Apply the one-dimensional inequality \eqref{eq:one-dimensional-pitt} in
the variable \(x_1\), keeping the other variables fixed.  Apply it next
in \(x_2\) to the partial Fourier transform obtained at the first step,
and continue through all coordinates.  Tonelli's theorem gives
\eqref{eq:product-pitt}.  Since
\[
  \prod_{j=1}^d|\xi_j|^{\alpha_j}
  \le |\xi|^\kappa,
\]
H\"older's inequality and Plancherel's identity yield
\begin{align*}
  \int |\xi|^\kappa|\widehat f(\xi)|^2\dd\xi
  &\le
  \left(\int |\xi|^2|\widehat f(\xi)|^2\dd\xi\right)^{\kappa/2}
  \left(\int |\widehat f(\xi)|^2\dd\xi\right)^{1-\kappa/2}\\
  &=
  \norm{\nabla f}_2^\kappa\norm{f}_2^{2-\kappa}.
\end{align*}
Finally,
\(
(1+|x_j|)^{-\alpha_j}\le|x_j|^{-\alpha_j}
\)
away from the null set \(x_j=0\), proving \eqref{eq:Galpha-pitt}.
\end{proof}

\subsection{Hardy's inequality}
\begin{lemma}
\label{lem:hardy}
If \(d\ge3\), then every \(f\in H^1(\R^d)\) satisfies
\begin{equation}
  \int_{\R^d}\frac{|f(x)|^2}{|x|^2}\dd x
  \le
  \frac{4}{(d-2)^2}
  \int_{\R^d}|\nabla f(x)|^2\dd x.
  \label{eq:hardy}
\end{equation}
\end{lemma}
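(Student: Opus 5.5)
We prove Hardy's inequality \eqref{eq:hardy} first for \(f\in C_c^\infty(\R^d)\) and then extend it to \(H^1(\R^d)\) by density. For smooth compactly supported \(f\) the argument is an integration by parts against the vector field \(x/|x|^2\). In \(\R^d\) with \(d\ge3\) we have
\[
  \operatorname{div}\Bigl(\frac{x}{|x|^2}\Bigr)=\frac{d-2}{|x|^2},
\]
and \(|x|^{-2}\) is locally integrable because \(d\ge3\). Hence, for real-valued \(f\) (the complex case follows by applying the result to the real and imaginary parts),
\[
  (d-2)\int_{\R^d}\frac{f^2}{|x|^2}\dd x
  =\int_{\R^d}f^2\operatorname{div}\Bigl(\frac{x}{|x|^2}\Bigr)\dd x
  =-2\int_{\R^d}f\,\nabla f\cdot\frac{x}{|x|^2}\dd x .
\]
The integration by parts must be justified near the origin, since the field is singular there. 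We excise a ball \(B_\varepsilon(0)\). The boundary term is bounded by \(\|f\|_\infty^2\,\varepsilon^{-1}\,|\partial B_\varepsilon|=O(\varepsilon^{d-2})\), which tends to \(0\) because \(d\ge3\). This is the only delicate point, and it is where the dimension restriction enters.

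Next we apply Cauchy--Schwarz to the right-hand side:
\[
  (d-2)\int\frac{f^2}{|x|^2}
  \le 2\Bigl(\int\frac{f^2}{|x|^2}\Bigr)^{1/2}\Bigl(\int|\nabla f|^2\Bigr)^{1/2}.
\]
The left-hand integral is finite for \(f\in C_c^\infty\). Dividing by its square root (the case where it vanishes is trivial) and squaring gives \eqref{eq:hardy} with constant \(4/(d-2)^2\).

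For general \(f\in H^1(\R^d)\), choose \(f_n\in C_c^\infty\) with \(f_n\to f\) in \(H^1\). Passing to a subsequence, \(f_n\to f\) almost everywhere. Fatou's lemma on the left-hand side and \(\nabla f_n\to\nabla f\) in \(L^2\) on the right-hand side then transfer the inequality to \(f\).
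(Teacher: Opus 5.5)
Your proof is correct and is essentially the paper's argument: both rest on integrating by parts against the field $x/|x|^2$ using $\operatorname{div}(x/|x|^2)=(d-2)/|x|^2$. The paper completes the square in $0\le\int\bigl|\nabla f+a\frac{x}{|x|^2}f\bigr|^2$ and optimizes at $a=(d-2)/2$ where you apply Cauchy--Schwarz directly, which is equivalent, and your excision of $B_\varepsilon(0)$ (boundary term $O(\varepsilon^{d-2})$) together with the explicit Fatou passage to $H^1(\R^d)$ spells out what the paper compresses into its one-line density reduction to $C_c^\infty(\R^d\setminus\{0\})$.
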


\begin{proof}
It is enough to consider
\(f\in C_c^\infty(\R^d\setminus\{0\})\), by density.  For any
\(a\in\R\),
\begin{align*}
  0
  &\le
  \int_{\R^d}
  \left|\nabla f(x)+a\frac{x}{|x|^2}f(x)\right|^2\dd x\\
  &=
  \int|\nabla f|^2
  +a^2\int\frac{|f|^2}{|x|^2}
  +a\int\frac{x}{|x|^2}\cdot\nabla(|f|^2).
\end{align*}
Since
\(\operatorname{div}(x/|x|^2)=(d-2)/|x|^2\), integration by parts gives
\[
  0\le
  \int|\nabla f|^2+
  \big(a^2-a(d-2)\big)\int\frac{|f|^2}{|x|^2}.
\]
Taking \(a=(d-2)/2\) proves \eqref{eq:hardy}.
\end{proof}

\subsection{Upper inequality }

\begin{lemma}
\label{lem:upper}
Let \(X=(X_t)_{t\ge0}\) be a time-homogeneous Markov process
on a measurable state space \(E\), and let
\(V:E\to[0,\infty]\) be measurable.  We write \(E_x\) for
expectation under the law of \(X\) started from \(x\in E\).
For \(T>0\), define
\[
  \eta_T
  :=
  \sup_{x\in E}
  E_x\left[\int_0^T V(X_s)\,ds\right].
\]
If \(\eta_T<\infty\), then, for every \(\lambda\ge0\) satisfying
\(\lambda\eta_T<1\), 
\begin{equation}
  \sup_xE_x\left[
    \exp\left\{
      \lambda\int_0^TV(X_s)\dd s
    \right\}
  \right]
  \le\frac1{1-\lambda\eta_T}.
  \label{eq:khasminskii}
\end{equation}
\end{lemma}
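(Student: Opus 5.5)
This is Khasminskii's lemma, and I would prove it by expanding the exponential into its power series and bounding each term with the Markov property. Write $\Lambda_T:=\int_0^TV(X_s)\dd s$. By monotone convergence,
\[
  E_x e^{\lambda\Lambda_T}=\sum_{n\ge0}\frac{\lambda^n}{n!}E_x[\Lambda_T^n].
\]
The claim therefore follows once we show
\[
  \sup_xE_x[\Lambda_T^n]\le n!\,\eta_T^n
  \qquad\text{for all }n\ge0,
\]
because summing the geometric series $\sum(\lambda\eta_T)^n$ gives $1/(1-\lambda\eta_T)$ when $\lambda\eta_T<1$.

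For the moment bound, symmetrize. Since $V\ge0$, Tonelli's theorem gives
\[
  \Lambda_T^n=n!\int_{0<s_1<\cdots<s_n<T}\prod_{i=1}^nV(X_{s_i})\dd s_1\cdots\dd s_n .
\]
Integrate out the last $n-1$ variables first. Conditioning on $\mathcal F_{s_1}$ and applying the Markov property at time $s_1$, we get
\[
  E_x\Big[\int_{s_1<s_2<\cdots<s_n<T}\prod_{i=2}^n V(X_{s_i})\,\Big|\,\mathcal F_{s_1}\Big]
  = \frac{1}{(n-1)!}E_{X_{s_1}}\big[\Lambda_{T-s_1}^{n-1}\big]
  \le \frac{1}{(n-1)!}\sup_y E_y\big[\Lambda_T^{n-1}\big].
\]
The inequality uses that $\Lambda_{T-s_1}\le\Lambda_T$ pathwise, since $V\ge0$. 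Writing $M_n:=\sup_xE_x[\Lambda_T^n]/n!$, we obtain
\[
  \frac{E_x[\Lambda_T^n]}{n!}\le M_{n-1}\,E_x\Big[\int_0^TV(X_{s_1})\dd s_1\Big]\le M_{n-1}\eta_T .
\]
Hence $M_n\le\eta_T M_{n-1}$, and induction from $M_0=1$ gives $M_n\le\eta_T^n$.

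The only delicate point is measurability in the Markov step, namely that $y\mapsto E_y[\Lambda_{t}^{n-1}]$ is measurable and that the conditional expectation identity holds for the nonnegative, possibly infinite, functional. For a time-homogeneous Markov process with jointly measurable paths this is standard: approximate $V$ by bounded truncations $V\wedge N$ and pass to the limit by monotone convergence. Note also that $V$ may take the value $+\infty$; the hypothesis $\eta_T<\infty$ forces $\Lambda_T<\infty$ almost surely, so no difficulty arises.

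Monotone convergence in $N$ at the end recovers the original $V$. This justifies every interchange of sum, integral and expectation, since all integrands are nonnegative.
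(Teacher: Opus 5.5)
Your proposal is correct and takes the same route as the paper. Both arguments write $E_x\bigl[(\int_0^TV(X_s)\,\mathrm{d}s)^n\bigr]$ as $n!$ times an integral over the ordered simplex, use the Markov property recursively to get $\sup_xE_x[\Lambda_T^n]\le n!\,\eta_T^n$, and then sum the geometric series $\sum_n(\lambda\eta_T)^n$. Your induction (conditioning on $\mathcal F_{s_1}$ and bounding the inner $(n-1)$-fold integral by $M_{n-1}$) is a more explicit version of the paper's one-line recursion, and your remarks on truncation and measurability fill in details the paper leaves implicit.
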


\begin{proof}
Put \(A_T=\int_0^TV(X_s)\dd s\).  Expanding the \(n\)-th power and
ordering the time variables,
\begin{equation}
  E_x[A_T^n]
  =
  n!E_x\left[
  \int_{0<s_1<\cdots<s_n<T}
  \prod_{j=1}^nV(X_{s_j})
  \dd s_1\cdots\dd s_n
  \right].
  \label{eq:ordered-moments}
\end{equation}
Starting with the last time variable and applying the Markov property
recursively gives
\begin{equation}
  \sup_xE_x[A_T^n]\le n!\eta_T^n.
  \label{eq:khasminskii-moments}
\end{equation}
Therefore
\[
  \sup_xE_x[\e^{\lambda A_T}]
  \le
  \sum_{n=0}^\infty(\lambda\eta_T)^n
  =
  \frac1{1-\lambda\eta_T}.
\]
\end{proof}

\bibliographystyle{unsrtnat}
\bibliography{references}

@article{Beckner1995,
  author  = {Beckner, William},
  title   = {Pitt's inequality and the uncertainty principle},
  journal = {Proceedings of the American Mathematical Society},
  volume  = {123},
  number  = {6},
  pages   = {1897--1905},
  year    = {1995},
  doi     = {10.1090/S0002-9939-1995-1254832-9}
}

@article{BergerLacoin2017,
  author  = {Berger, Quentin and Lacoin, Hubert},
  title   = {The high-temperature behavior for the directed polymer in dimension {$1+2$}},
  journal = {Annales de l'Institut Henri Poincar\'e, Probabilit\'es et Statistiques},
  volume  = {53},
  number  = {1},
  pages   = {430--450},
  year    = {2017},
  doi     = {10.1214/15-AIHP721}
}

@article{CarmonaHu2002,
  author  = {Carmona, Philippe and Hu, Yueyun},
  title   = {On the partition function of a directed polymer in a Gaussian random environment},
  journal = {Probability Theory and Related Fields},
  volume  = {124},
  number  = {3},
  pages   = {431--457},
  year    = {2002},
  doi     = {10.1007/s004400200213}
}

@article{CarmonaHu2006,
  author  = {Carmona, Philippe and Hu, Yueyun},
  title   = {Strong disorder implies strong localization for directed polymers in a random environment},
  journal = {ALEA, Latin American Journal of Probability and Mathematical Statistics},
  volume  = {2},
  pages   = {217--229},
  year    = {2006}
}

@book{Comets2017,
  author    = {Comets, Francis},
  title     = {Directed Polymers in Random Environments},
  series    = {Lecture Notes in Mathematics},
  volume    = {2175},
  publisher = {Springer},
  address   = {Cham},
  year      = {2017},
  doi       = {10.1007/978-3-319-50487-2}
}

@article{CometsShigaYoshida2003,
  author  = {Comets, Francis and Shiga, Tokuzo and Yoshida, Nobuo},
  title   = {Directed polymers in a random environment: Path localization and strong disorder},
  journal = {Bernoulli},
  volume  = {9},
  number  = {4},
  pages   = {705--723},
  year    = {2003},
  doi     = {10.3150/bj/1066223275}
}

@article{CometsVargas2006,
  author  = {Comets, Francis and Vargas, Vincent},
  title   = {Majorizing multiplicative cascades for directed polymers in random media},
  journal = {ALEA, Latin American Journal of Probability and Mathematical Statistics},
  volume  = {2},
  pages   = {267--277},
  year    = {2006}
}

@article{CometsYoshida2006,
  author  = {Comets, Francis and Yoshida, Nobuo},
  title   = {Directed polymers in random environment are diffusive at weak disorder},
  journal = {The Annals of Probability},
  volume  = {34},
  number  = {5},
  pages   = {1746--1770},
  year    = {2006},
  doi     = {10.1214/009117905000000828}
}

@book{denHollander2009,
  author    = {den Hollander, Frank},
  title     = {Random Polymers},
  series    = {Lecture Notes in Mathematics},
  volume    = {1974},
  publisher = {Springer},
  address   = {Berlin},
  year      = {2009},
  doi       = {10.1007/978-3-642-00333-2}
}

@article{HuseHenley1985,
  author  = {Huse, David A. and Henley, Christopher L.},
  title   = {Pinning and roughening of domain walls in {Ising} systems due to random impurities},
  journal = {Physical Review Letters},
  volume  = {54},
  number  = {25},
  pages   = {2708--2711},
  year    = {1985},
  doi     = {10.1103/PhysRevLett.54.2708}
}

@article{JunkLacoin2024,
  author  = {Junk, Stefan and Lacoin, Hubert},
  title   = {Strong disorder and very strong disorder are equivalent for directed polymers},
  journal = {Annales Scientifiques de l'\'Ecole Normale Sup\'erieure},
  volume  = {59},
  number  = {4},
  pages   = {827--865},
  year    = {2026},
  doi     = {10.24033/asens.1849},
  eprint  = {2402.02562},
  archivePrefix = {arXiv},
  primaryClass  = {math.PR}
}

@article{JunkLacoin2026,
  author  = {Junk, Stefan and Lacoin, Hubert},
  title   = {Coincidence of critical points for directed polymers for general environments and random walks},
  journal = {Orbita Mathematicae},
  volume  = {3},
  number  = {1},
  pages   = {89--125},
  year    = {2026},
  doi     = {10.2140/om.2026.3.89},
  eprint  = {2502.04113},
  archivePrefix = {arXiv},
  primaryClass  = {math.PR}
}

@article{Lacoin2010,
  author  = {Lacoin, Hubert},
  title   = {New bounds for the free energy of directed polymers in dimension {$1+1$} and {$1+2$}},
  journal = {Communications in Mathematical Physics},
  volume  = {294},
  number  = {2},
  pages   = {471--503},
  year    = {2010},
  doi     = {10.1007/s00220-009-0957-3}
}

@article{Lacoin2011,
  author  = {Lacoin, Hubert},
  title   = {Influence of spatial correlation for directed polymers},
  journal = {The Annals of Probability},
  volume  = {39},
  number  = {1},
  pages   = {139--175},
  year    = {2011},
  doi     = {10.1214/10-AOP553}
}

@article{RoviraTindel2005,
  author  = {Rovira, Carles and Tindel, Samy},
  title   = {On the Brownian-directed polymer in a Gaussian random environment},
  journal = {Journal of Functional Analysis},
  volume  = {222},
  number  = {1},
  pages   = {178--201},
  year    = {2005},
  doi     = {10.1016/j.jfa.2004.07.017}
}

@article{Zygouras2024,
  author  = {Zygouras, Nikos},
  title   = {Directed polymers in a random environment: A review of the phase transitions},
  journal = {Stochastic Processes and their Applications},
  volume  = {177},
  pages   = {104431},
  year    = {2024},
  doi     = {10.1016/j.spa.2024.104431}
}

@article{Viveros2023,
  author  = {Viveros, Roberto},
  title   = {Directed polymer for very heavy tailed random walks},
  journal = {The Annals of Applied Probability},
  volume  = {33},
  number  = {1},
  pages   = {490--506},
  year    = {2023},
  doi     = {10.1214/22-AAP1821}
}

@article{Nitzschner2025,
  author  = {Nitzschner, Maximilian},
  title   = {Absence of weak disorder for directed polymers on supercritical percolation clusters},
  journal = {Annales de l'Institut Henri Poincar\'e, Probabilit\'es et Statistiques},
  volume  = {61},
  number  = {2},
  pages   = {1319--1333},
  year    = {2025},
  doi     = {10.1214/24-AIHP1463}
}

@misc{CottiniNitzschner2025,
  author        = {Cottini, Francesca and Nitzschner, Maximilian},
  title         = {Non-coincidence of critical points for directed polymers on supercritical percolation clusters},
  year          = {2025},
  month         = oct,
  howpublished  = {arXiv preprint arXiv:2510.24532},
  eprint        = {2510.24532},
  archivePrefix = {arXiv},
  primaryClass  = {math.PR},
  doi           = {10.48550/arXiv.2510.24532}
}

@article{mukherjee2016weak,
  author  = {Mukherjee, Chiranjib and Shamov, Alexander and Zeitouni, Ofer},
  title   = {Weak and strong disorder for the stochastic heat equation and continuous directed polymers in {$d\geq 3$}},
  journal = {Electronic Communications in Probability},
  volume  = {21},
  number  = {61},
  pages   = {1--12},
  year    = {2016},
  doi     = {10.1214/16-ECP18}
}

@article{Nakashima2019,
  author       = {Nakashima, Makoto},
  title        = {Free energy of directed polymers in random environment
                  in {$1+1$}-dimension at high temperature},
  journal      = {Electronic Journal of Probability},
  volume       = {24},
  number       = {50},
  pages        = {1--43},
  year         = {2019},
  doi          = {10.1214/19-EJP292}
}

@misc{Nakashima2025,
  author       = {Nakashima, Makoto},
  title        = {A note on the asymptotics of the free energy of {$1+1$}
                  dimensional directed polymers in random environment at
                  high temperature},
  year         = {2025},
  month        = mar,
  howpublished = {arXiv preprint arXiv:2503.20192},
  eprint       = {2503.20192},
  archivePrefix = {arXiv},
  primaryClass = {math.PR},
  doi          = {10.48550/arXiv.2503.20192}
}

@misc{BergerNakajima2026,
  author       = {Berger, Quentin and Nakajima, Shuta},
  title        = {Sharp behavior of the free energy for the two-dimensional
                  directed polymer model},
  year         = {2026},
  month        = may,
  howpublished = {arXiv preprint arXiv:2605.30707},
  eprint       = {2605.30707},
  archivePrefix = {arXiv},
  primaryClass = {math.PR},
  doi          = {10.48550/arXiv.2605.30707}
}

@misc{LacoinSmooth2025,
  author       = {Lacoin, Hubert},
  title        = {The localization transition for the directed polymer in a
                  random environment is smooth},
  year         = {2025},
  month        = may,
  howpublished = {arXiv preprint arXiv:2505.13382},
  eprint       = {2505.13382},
  archivePrefix = {arXiv},
  primaryClass = {math.PR},
  doi          = {10.48550/arXiv.2505.13382}
}

@article{CoscoSeroussiZeitouni2021,
  author  = {Cosco, Cl{\'e}ment and Seroussi, Inbar and Zeitouni, Ofer},
  title   = {Directed polymers on infinite graphs},
  journal = {Communications in Mathematical Physics},
  volume  = {386},
  number  = {1},
  pages   = {395--432},
  year    = {2021},
  doi     = {10.1007/s00220-021-04034-w}
}

@article{ArendtBatty1996,
  author  = {Arendt, Wolfgang and Batty, Charles J. K.},
  title   = {The Spectral Bound of Schr{\"o}dinger Operators},
  journal = {Potential Analysis},
  volume  = {5},
  number  = {3},
  pages   = {207--230},
  year    = {1996},
  doi     = {10.1007/BF00282361}
}

\end{document}